\renewcommand{\phi}{\varphi}
\newtheorem{theorem}{Theorem}[section]
\newtheorem{lemma}[theorem]{Lemma}
\newtheorem{proposition}[theorem]{Proposition}
\newtheorem{corollary}[theorem]{Corollary}
\theoremstyle{definition}
\newtheorem{example}[theorem]{Example}
\theoremstyle{remark}
\numberwithin{equation}{section}
\begin{document}

\setcounter{page}{1}

\title[  On some algebras  of THO and ATHO]{On some algebras  of truncated Hankel  and Asymmetric truncated Hankel operators.}

\author[Ameur Yagoub and Muhammad Ahsan Khan]{Ameur Yagoub $^1$, Muhammad Ahsan Khan $^2$}

\address{$^{1}$ Laboratoire de math\'ematiques pures et appliqu\'es. Universit\'e de Amar Telidji. Laghouat, 03000. Algeria.}
\email{\textcolor[rgb]{0.00,0.00,0.84}{a.yagoub@lagh-univ.dz}}
\address{$^{2}$  Department of Mathematics University of Kotli Azad Jammu and
		Kashmir, Pakistan.}
\email{\textcolor[rgb]{0.00,0.00,0.84}{ahsankhan388@hotmail.com}}


\let\thefootnote\relax\footnote{Copyright 2016 by the Tusi Mathematical Research Group.}

\subjclass[2010]{Primary 47B32, Secondary 47B35, 30H10.}

\keywords{Model spaces, Truncated Toeplitz and Hankel operators, Asymmetric truncated  Toeplitz and Hankel operators, product of asymmetric truncated
Hankel operators.}

\date{Received: xxxxxx; Accepted: zzzzzz.
\newline \indent $^{*}$Corresponding author
\newline $^\diamond$ Advance publication -- final volume, issue, and page numbers to be assigned.}

\begin{abstract}
	In the last decade, a large amount of research has been concentrated on the operators living on the model space. Asymmetric truncated Toeplitz operators and asymmetric truncated  Hankel operators are the natural generalization of truncated Toeplitz operators and truncated Hankel operators respectively. In this paper, we obtained the basic results concerning the product of these operators and in terms of product their connection with each other.  In addition, when the inner function has a certain symmetric property, some algebraic properties of truncated Hankel operators are also discussed.
\end{abstract} \maketitle
\section{Introduction}
Let $H^2$ be the classical  Hardy space of the open unit disk $\mathbb{D}$, consists of functions analytic in $\mathbb{D}$ with square summable Maclaurin
coefficients, and let $L^2:=L^2(\mathbb{T})$ be the space of all square-integrable functions on the one dimensional torus  $\mathbb{T}$ in the complex plane $\mathbb{C}$. The space $L^\infty\subset L^2$ is formed by essentially bounded functions on $\mathbb{T}$, while  $H^\infty\subset H^2$ is the algebra of bounded analytic functions on $\mathbb{D}$. The  inner function is an element  $ {u}\in H^2$ such that  $| u(z)|=1$ a.e. on $\mathbb{T}$. The model space corresponding to $u$ is the
closed subspace $K_ {u}$ of $H^2$ of the form $$K_{ {u}} := H^2 \ominus  {u} H^2.$$
It is a reproducing kernel Hilbert space with the reproducing kernels for points $\lambda \in \mathbb{D}$ are 
\begin{displaymath}
k_{\lambda}^{ {u}}(z)
		:= \frac{1 - \overline{ {u}(\lambda)}  {u}(z)}{1 - \overline{\lambda} z}, \quad z \in \mathbb{D}.
\end{displaymath}
There is a natural conjugation (a conjugate-linear isometric involution, see \cite{gm}) on $K_{ {u}}$
defined in terms of boundary functions by
$$C_ {u} f(z) :=\widetilde{f}(z)= \overline{ f(z) z} {u(z)},\quad |z|=1.$$ A simple computation shows that $\widetilde{k_{\lambda}^{ {u}}}(z) = \frac{ {u}(z) -  {u}(\lambda)}{z - \lambda}$. 
It is easy to verify that the map $  U : L^2 \rightarrow L^2$ given by
$$Uf(z) = \widehat{f}(z) = \overline{f(\overline{z})},\quad |z|= 1,$$
is a conjugation on $L^2$. 
One can also easily verify that for an inner function $u$, the function  $\widehat{u}(z) = \overline{u(\overline{z})}$ is also inner. The  inner function $u$ is called real symmetric if $u =\widehat{u}$.
The authors of \cite{CG} proved that $f \in K_u$ if and only if $\widehat{f}\in K_{\widehat{u}}$, and we have $\widehat{fg}=\widehat{f}\widehat{g}$, for every $f,g\in L^2$.
The function $ {u}$ is said to have an angular derivative in the sense of
Caratheodory ($ADC$) at the point $\eta\in\mathbb{T}$  if $ {u}$ has a nontangential limit
$ {u}(\eta)$ of unit modulus at $\eta$ and $ {u}'$ has a nontangential limit $ {u}'(\eta)$ at $\eta$, in particular  $k_{\eta}^{ {u}}(z) =\overline{ {u}(\eta)}\eta\widetilde{k_{\eta}^{ {u}}}(z)\in K_ {u}$ (see \cite{gm} for more details).\\
 Let $P_ {u} = P- {u} P \overline{ {u}}$ denotes the projection from $L^2$ to $K_ {u}$, where $P$ is the projection of $L^2$ on $H^2$. Let $S$ be the unilateral shift on $H^2$ and, for a nonconstant inner function $  {u} $, let $S_ {u}= P_ {u} S |K_ {u}$ be the compression of $S$ to $K_ {u}$ with its adjoint $S_ {u}^*$ given by
\begin{displaymath}
(S_ {u}^*f)(z)= \frac{f(z) - f(0)}{z } \quad for \quad z\in \mathbb{T}.
\end{displaymath}
For
each $\alpha\in\overline{\mathbb{D}}$, one can consider the following rank-one perturbation of $S_u$ on  $K_u$:
 $$
S_u^\alpha=S_u+\frac{\alpha}{1-\alpha\overline{u(0)}}k_0^u\otimes\widetilde{k_0^u}.
$$
Throughout in this paper, we will make use of the defect operators $I-S_ {u} S_ {u}^*=k_0^ {u}\otimes k_0^ {u}$ and $I-S^*_ {u} S_ {u}=\widetilde{k_0^ {u}}\otimes \widetilde{k_0^ {u}}$, where $f\otimes g$ is the rank one operator that maps $h$ to $f <h, g>$, see \cite{sar}. If $ \phi\in L^2 $, then the compression of the multiplication operator $ M_\phi $ to $ H^2 $ is called a \emph{Toeplitz operator} and is denoted by $ T_\phi $. That means that $ T_\phi=PM_\phi|H^2 $. More than a decade ago,
Sarason has introduced in~\cite{sar} the so-called \emph{truncated Toeplitz operators} (TTOs). For $\varphi\in L^2$, the TTO on $K_ {u}$  with symbol $\phi$ is the operator $A^u_\phi$ defined by
\begin{displaymath}
A_\varphi^{ {u}}f := P_ {u}(\varphi f ),\quad f\in K_ {u}.
\end{displaymath}
The authors in \cite{cj} and \cite{cj2,r} initiated the study
of so-called \emph{asymmetric truncated Toeplitz operators} (ATTOs). Let $u,v$ be two inner functions. An ATTO $ A_\varphi^{ {u},v} $
with symbol $\phi\in L^2$, is an operator from $K_u$ into $K_v$ densely defined by
\begin{displaymath}
A_\varphi^{ {u},v}f:= P_ {v}(\varphi f ),\quad f\in K_ {u}.
\end{displaymath}
Clearly, $A_\varphi^{ {u},u}=A_\varphi^{ {u}}$.  We write $ \varphi\overset{\mathrm{A}}{\equiv} \psi$ to mean that $A^{ {u},v}_\varphi = A^{ {u},v}_\psi$.
 Most recently the \emph{truncated Hankel operators} (THOs) were introduced by C. Gu in \cite{GU}. The THO on $K_ {u}$  with symbol $\phi$ is the operator $B^u_\phi$ defined as 
$$B^u_\phi f:= P_u J(I-P)(\phi f),\quad f\in K_u,$$
where $ J : L^2 \rightarrow L^2$ is the Flip operator given by,
$Jf(z)= \overline{z}f(\overline{z}), |z| = 1.$
It is notable that $J$ is a unitary operator which maps $H^2$ precisely  onto $\overline{zH^2}$ and $\overline{zH^2}$ onto $H^2$. 

Their asymmetric versions were introduced in \cite{LM}. An \emph{asymmetric truncated Hankel operator} (ATHO) $B^{u,v}_\phi$ with symbol $\phi\in L^2$ is an operator
from $K_u$ into $K_v$ densely defined as
$$B^{u,v}_\phi f:= P_v J(I-P)(\phi f),\quad f\in K_u.$$ 
We write $ \varphi\overset{\mathrm{B}}{\equiv} \psi$ to mean that $B^{ {u},v}_\varphi = B^{ {u},v}_\psi$. 
If $K_u$ and $K_v$ are model spaces corresponding to inner functions $u$ and $v$ respectively, we will use the following notations:
 \begin{itemize}
 	 \item $\mathcal{T}(u)$ is the space of all bounded TTOs on
 	$K_ {u}$.
 	\item  $\mathcal{T}(u,v)$ is the space of all bounded ATTOs from $K_u$ into $K_v$.
 	
 	\item   $\mathcal{H}(u)$ is the space of all bounded THOs on $K_ {u}$.
 	\item   $\mathcal{H}(u,v)$ is the space of all bounded ATHOs from $K_ {u}$ into $K_v$.
 \end{itemize} 
It is well known that the product of two TTOs is
not a TTO. In \cite{sed}, Sedlock has investigated when a product of TTOs is itself a TTOs and as a result completely identified  all the maximal commutative algebras in $\mathcal{T}( {u})$. The author in \cite{y} obtained some results concerning the products  of ATTOs. D.O. Kanga and  H. J. Kimb \cite{KK} studied the  products  of THOs  when the inner function $u$ has a certain symmetric property.\\
 The purpose of the current paper is  to obtain the basic  results concerning the products of TTOs, ATTOs, THOs and  ATHOs , apart from this when the  inner function $u$ has a certain symmetric property, some of the algebraic properties of THOs are also obtained .\\
 The structure of the paper is the following. By means of section ~1 and section ~2 we want to make sure that the reader become acquainted with basic notations and other useful
 facts from this area, needed when we are going to start the main work in next sections. After the section of  preliminaries in section ~3 we will obtain the product relation for ATHOs and ATTOs. In section ~4 we obtained some algebraic properties of THOs. The last part of the paper, comprising section ~5 and section ~6, obtained results related to the products of THOs and ATHOs.   
\section{Preliminaries.}

As stated in section ~1 that  Sedlock completely characterized  the maximal commutative algebras in $\mathcal{T}( {u})$.
These algebras $\mathcal{B}_{ {u}}^\alpha$, where the parameter $\alpha$ belongs to the Riemann sphere
$\alpha\in \widehat{\mathbb{C}}=\mathbb{C}\cup\{\infty\}$, are
$$\mathcal{B}_{ {u}}^\alpha :=\left\{A^{ {u}}_{\phi + \alpha \overline{S_ {u} C_ {u}\varphi} +c} \in \mathcal{T}( {u}): \phi \in K_{ {u}},c\in\mathbb{C}\right\},\textit{ and }
\mathcal{B}_{ {u}}^\infty :=\left\{\ A^{ {u}}_{ \overline{\varphi}}\in \mathcal{T}( {u}): \phi \in K_{ {u}}\right\}.$$
In the following Lemma from \cite{sed} , we summarize the main properties of the classes $\mathcal{B}_{ {u}}^\alpha$ and their links to $S_u^\alpha$.
\begin{lemma}\label{pro} Let $u$ be an inner function.
		
		\begin{enumerate}
		\item [(1)]$ A\in\mathcal{B}_{u}^{\alpha} $ if and only if $A^*\in\mathcal{B}_{u}^{1/\overline{\alpha}}\ $.

			\item[(2)] If $|\alpha| = 1$, then $S_{u}^{\alpha} $ is a unitary operator  and $$\mathcal{B}_{u}^{\alpha} = \{S_{u}^{\alpha}\}'=\left\lbrace \Phi(S_u^\alpha),\Phi\in L^\infty(\mu_\alpha)\right\rbrace, $$ 
		where $\mu_\alpha$ is the Clark measure (see \cite{CL}). Each operator in $\mathcal{B}_{u}^{\alpha} $ is unitarily equivalent to a multiplication operator $M_\Phi$ on $L^2(\mu_\alpha)$ induced by a function $\Phi\in L^\infty(\mu_\alpha)$.	
			\item [(3)] If $|\alpha| < 1$, then $S_u^\alpha$ is a completely nonunitary contraction, and 
			$$\mathcal{B}_{u}^{\alpha} = \{S_{u}^{\alpha}\}'=\left\lbrace \Psi(S_u^\alpha)=A^u_{\frac{\Psi}{1-\alpha\overline{u}}},\Psi\in H^\infty\right\rbrace. $$ 
			\item[(4)] If $|\alpha| > 1$, then $$\mathcal{B}_{u}^{\alpha} = \{(S_u^{1/\overline{\alpha}})^*\}'=\left\lbrace \widehat{\Psi}(S_u^{1/\overline{\alpha}})^*= A^u_{\frac{\alpha\overline{\Psi}}{\alpha-u}},\Psi\in H^\infty\right\rbrace.$$
			\item[(5)] If $A,B\in\mathcal{T}(u)$ , then $AB\in\mathcal{T}(u)$ if and only if either one of the operator in the product is a scalar multiple of an identity operator, or both belong to the same class $\mathcal{B}_u^ \alpha$ for some $\alpha\in\widehat{\mathbb{C}}$. In the last case we also have $AB\in\mathcal{B}_u^\alpha$.
			
		\end{enumerate}
	\end{lemma}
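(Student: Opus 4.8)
The plan is to treat the five assertions by separating the three regimes $|\alpha|=1$, $|\alpha|<1$, $|\alpha|>1$, resting everything on two pillars: Sarason's symbol calculus for truncated Toeplitz operators (in particular the adjoint formula $(A^u_\varphi)^*=A^u_{\overline{\varphi}}$ and the non-uniqueness of symbols encoded by $\overset{\mathrm{A}}{\equiv}$), together with the rank-one structure of $S_u^\alpha$ recorded through the defect relations $I-S_uS_u^*=k_0^u\otimes k_0^u$ and $I-S_u^*S_u=\widetilde{k_0^u}\otimes\widetilde{k_0^u}$. For (1) I would apply the adjoint formula to the defining symbol $\varphi+\alpha\overline{S_uC_u\varphi}+c$, obtaining $\overline{\varphi}+\overline{\alpha}\,S_uC_u\varphi+\overline{c}$, and then rewrite this expression modulo $\overset{\mathrm{A}}{\equiv}$ in the canonical Sedlock form with parameter $1/\overline{\alpha}$; the computation uses the intertwining of $C_u$ with $S_u$ and interchanges the roles of $\mathcal{B}_u^0$ and $\mathcal{B}_u^\infty$ in the two boundary cases $\alpha=0,\infty$.

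For (2)--(4) I would invoke the model-theoretic identification of $S_u^\alpha$. When $|\alpha|=1$ the perturbation makes $S_u^\alpha$ a rank-one unitary perturbation of $S_u$, i.e. a Clark operator; by Clark's theorem there is a unitary $K_u\to L^2(\mu_\alpha)$ carrying $S_u^\alpha$ to multiplication by the independent variable, whose commutant is exactly $\{M_\Phi:\Phi\in L^\infty(\mu_\alpha)\}$ because the constant function is cyclic. This yields the multiplication-operator model and the equality $\mathcal{B}_u^\alpha=\{S_u^\alpha\}'$ simultaneously. When $|\alpha|<1$, $S_u^\alpha$ is a completely nonunitary contraction, so the Sz.-Nagy--Foias functional calculus supplies a surjection $H^\infty\to\{S_u^\alpha\}'$, $\Psi\mapsto\Psi(S_u^\alpha)$; the remaining task is the bookkeeping identity $\Psi(S_u^\alpha)=A^u_{\Psi/(1-\alpha\overline{u})}$, which I would verify first on $\Psi=1$ and $\Psi=z$ (recovering $I$ and $S_u^\alpha$, using that $\overline{u}^{\,n}\overset{\mathrm{A}}{\equiv}0$) and then extend by multiplicativity and weak-$*$ continuity. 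Part (4) then follows formally from (1) and (3): since $|1/\overline{\alpha}|<1$, applying (1) gives $\mathcal{B}_u^\alpha=(\mathcal{B}_u^{1/\overline{\alpha}})^*=\{(S_u^{1/\overline{\alpha}})^*\}'$, and taking the adjoint of the symbol $\Psi/(1-(1/\overline{\alpha})\overline{u})$ produces exactly $\alpha\overline{\Psi}/(\alpha-u)$.

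The substantive part is (5), and I expect it to be the main obstacle. Here the natural tool is Sarason's characterization that $X\in\mathcal{T}(u)$ if and only if $X-S_uXS_u^*$ has the form $k_0^u\otimes\chi+\psi\otimes k_0^u$ with $\chi,\psi\in K_u$. Given $A,B\in\mathcal{T}(u)$ I would compute $AB-S_u(AB)S_u^*$, insert $I=S_uS_u^*+k_0^u\otimes k_0^u$ and its dual to split the expression, and impose that the result again be of this admissible rank form. Discarding the degenerate case where $A$ or $B$ is a scalar multiple of the identity, the rank constraint should force a proportionality between the ``analytic'' and ``co-analytic'' parts of $A$ and of $B$ measured against $S_u$; encoding that proportionality by a single $\alpha\in\widehat{\mathbb{C}}$ is precisely the statement that both operators share a common $\mathcal{B}_u^\alpha$, and that the product lands in the same class is then immediate from (2)--(4), each $\mathcal{B}_u^\alpha$ being an algebra. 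The delicate point I anticipate is controlling boundedness and the symbol non-uniqueness throughout this rank computation, and separating the parameter values cleanly so that no product secretly escapes every $\mathcal{B}_u^\alpha$; this is where the original argument is most technical.
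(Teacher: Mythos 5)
The paper offers no proof of this lemma to compare against: it is quoted as a known result from Sedlock \cite{sed} (with \cite{CL} for the Clark measures), so the only question is whether your outline would actually establish it. Parts (1)--(4) of your plan follow the same route as Sedlock's original paper --- the adjoint/symbol computation for (1), Clark's unitary model for $|\alpha|=1$, the Sz.-Nagy--Foias $H^\infty$ calculus for $|\alpha|<1$, and duality via (1) for $|\alpha|>1$ --- and these are workable, though even here you assert rather than derive the central identification of the symbol-defined class $\mathcal{B}_u^\alpha$ with the commutant $\{S_u^\alpha\}'$. Cyclicity gives you the commutant of the model operator; it does not by itself tell you that every element of that commutant is a truncated Toeplitz operator whose symbol has the Sedlock form $\varphi+\alpha\overline{S_uC_u\varphi}+c$, and conversely. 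That translation is a genuine computation with the defect operators. Likewise, "extend by multiplicativity" in (3) needs care, since the symbol map $\varphi\mapsto A^u_\varphi$ is not multiplicative; the identity $\Psi(S_u^\alpha)=A^u_{\Psi/(1-\alpha\overline{u})}$ has to be checked directly, not propagated from $\Psi=z$.

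The substantive gap is (5), which you correctly identify as the main obstacle but then do not close. Computing $AB-S_u(AB)S_u^*$ with the defect identities and saying that "the rank constraint should force a proportionality" encoded by a single $\alpha\in\widehat{\mathbb{C}}$ is a restatement of the conclusion, not an argument for it. Extracting a common Sedlock parameter from that rank-two condition --- handling the degenerate alignments of the four vectors that appear, ruling out products that satisfy the rank condition without a shared parameter, and treating the cases $\alpha=0$ and $\alpha=\infty$ separately --- is the entire content of Sedlock's main theorem and occupies most of his paper. As written, a reader could not reconstruct that argument from your proposal. For the purposes of this paper the correct move is the one the authors make, namely to cite \cite{sed}; if you intend to prove the lemma, part (5) still requires the full case analysis.
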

Remember that an operator $A$  is said to be $C_u$-symmetric if $C_uAC_u = A^*$. We quote few results from \cite{cj,cj2,GP,JL,jl,l,120} concerning  ATTOs.
\begin{lemma}\label{lemma}
Let $ {u},v$ be two inner functions. 
\begin{enumerate}
\item[(1)] $A_\varphi^{ {u},v}= 0$ if and only if $\phi\in v H^2+\overline{ {u} H^2}$.

\item[(2)] The bounded linear operator $A$ from $K_u$ into $K_v$, belongs to $\mathcal{T}( {u},v)$ if and only if there are functions $\phi_1\in K_v$ and $\phi_2\in K_ {u}$ such that $$A-S_v AS^*_ {u}=\phi_1\otimes k_0^ {u}+k_0^v\otimes \phi_2.$$ In this case, we have $A=A^{ {u},v}_{\phi_1+\overline{\phi_2}}$.
\item[(3)] $$A^{ {u},v}_{k_0^v}=A^{ {u},v}_{1} =A^{ {u},v}_{\overline{k_0^ {u}}}.$$
\item[(4)] The operators in $\mathcal{T}( {u})$ are $C_u$-symmetric.
\item[(5)] If $ \lambda\in\mathbb{D}, $ then the operators $\widetilde{k^v_\lambda }\otimes k_\lambda^ {u}$ and $k^v_\lambda \otimes \widetilde{k_\lambda^ {u}}$ belong to $ \mathcal{T}( {u},v)$, and we have
$$\widetilde{k^v_\lambda }\otimes k_\lambda^ {u}=A^{ {u},v}_{\frac{v}{z-\lambda}}\quad\text{and}\quad k^v_\lambda \otimes \widetilde{k_\lambda^ {u}}=A^{ {u},v}_{\frac{\overline{ {u}}}{\overline{z-\lambda}}}.$$
 \item[(6)] If both $  {u} $ and $ v $ have an $ADC$ at a point $ \eta $ of $ \mathbb{T}$, then the operator $k^v_\eta \otimes k_\eta^ {u}$ belongs to $ \mathcal{T}( {u},v)$, and  $$k^v_\eta \otimes k_\eta^ {u}=A^{ {u},v}_{k^v_\eta+\overline{k_\eta^ {u}}-1}.$$
  \end{enumerate}
\end{lemma}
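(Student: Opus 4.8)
The plan is to take (1) as the computational backbone, deduce (3), (5) and (6) from it together with explicit reproducing-kernel identities, and handle (2) and (4) by separate structural arguments; I expect the Sarason-type characterization in (2) to be the real obstacle.

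For (1) I would first dispatch the easy inclusion. If $\varphi = v g + \overline{u h}$ with $g,h\in H^2$, then for $f\in K_u$ one has $vgf\in vH^2$, while $f\in K_u$ forces $\overline u f\in\overline{zH^2}$, so $\overline{uh}f=\overline h\,(\overline u f)\in\overline{zH^2}$; since $P_v$ kills both $vH^2$ and $\overline{zH^2}$, we get $A_\varphi^{u,v}=0$. For the converse I would pass to the bilinear form: $A_\varphi^{u,v}=0$ is equivalent to $\langle\varphi f,g\rangle=\langle\varphi,\overline f g\rangle=0$ for all $f\in K_u$, $g\in K_v$, i.e.\ to $\varphi\perp\overline{\mathrm{span}}\{\overline f g:f\in K_u,\ g\in K_v\}$. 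A direct check shows $\overline f g\perp vH^2+\overline{uH^2}$ for all such $f,g$, and a standard Fourier/codimension argument identifies this closed span with exactly $(vH^2+\overline{uH^2})^\perp$, which yields $\varphi\in vH^2+\overline{uH^2}$.

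Granting (1), part (3) is immediate: $k_0^v-1=-\overline{v(0)}\,v\in vH^2$ and $\overline{k_0^u}-1=-u(0)\,\overline u\in\overline{uH^2}$, so both differences are annihilated, giving $A_{k_0^v}^{u,v}=A_1^{u,v}=A_{\overline{k_0^u}}^{u,v}$. For the first identity in (5) I would compute directly, splitting $\frac{vf}{z-\lambda}=v\frac{f-f(\lambda)}{z-\lambda}+f(\lambda)\frac{v}{z-\lambda}$: the first summand lies in $vH^2$, and writing $\frac{v}{z-\lambda}=\widetilde{k_\lambda^v}+v(\lambda)\frac{1}{z-\lambda}$ with $\frac{1}{z-\lambda}\in\overline{zH^2}$, application of $P_v$ leaves precisely $f(\lambda)\,\widetilde{k_\lambda^v}=\widetilde{k_\lambda^v}\langle f,k_\lambda^u\rangle$, the action of the rank-one operator. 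The second identity follows by taking adjoints, since $(k_\lambda^v\otimes\widetilde{k_\lambda^u})^*=\widetilde{k_\lambda^u}\otimes k_\lambda^v$ and $(A^{u,v}_\varphi)^*=A^{v,u}_{\overline\varphi}$. For (6) I would specialize (5) to the boundary point $\eta$: the $ADC$ hypothesis makes evaluation at $\eta$ bounded on $K_u$ and guarantees $k_\lambda^u\to k_\eta^u$, $\widetilde{k_\lambda^v}\to\widetilde{k_\eta^v}$ in norm, so one may let $\lambda\to\eta$ nontangentially. Using $k_\eta^v=\overline{v(\eta)}\eta\,\widetilde{k_\eta^v}$ one rewrites $k_\eta^v\otimes k_\eta^u$ as an ATTO, and I would then verify directly that $A^{u,v}_{k_\eta^v+\overline{k_\eta^u}-1}f=f(\eta)\,k_\eta^v$ on $K_u$; the care here is that the intermediate symbols $\frac{v}{z-\eta}$ are no longer in $L^2$, so the reduction to the clean symbol $k_\eta^v+\overline{k_\eta^u}-1\in K_v+\overline{K_u}+\mathbb C$ must be carried out at the level of the operator, not of the symbol.

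The heart of the matter is (2), the asymmetric Sarason characterization. For the forward direction I would compute $A_\varphi^{u,v}-S_v A_\varphi^{u,v}S_u^*$ using $S_v=A_z^{v}$ and $S_u^*f=\frac{f-f(0)}{z}$ and read off that it has the stated rank-two form. For the converse---the part I expect to be hardest---I would exploit that $u,v$ nonconstant makes $S_u,S_v$ completely non-unitary, so $S_v^{N}A(S_u^*)^{N}\to0$ strongly and the telescoping series
\begin{multline*}
A=\sum_{n\ge0}S_v^{\,n}\bigl(A-S_vAS_u^*\bigr)(S_u^*)^{n}\\
=\sum_{n\ge0}\bigl(S_v^{\,n}\phi_1\bigr)\otimes\bigl(S_u^{\,n}k_0^u\bigr)+\bigl(S_v^{\,n}k_0^v\bigr)\otimes\bigl(S_u^{\,n}\phi_2\bigr)
\end{multline*}
converges in the strong operator topology; the remaining obstacle is to sum these two rank-one series and recognize the result as $A^{u,v}_{\phi_1+\overline{\phi_2}}$, which I would do via the generating-function identities underlying (5). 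Finally, (4) is the Garcia--Putinar complex symmetry: a short computation gives $C_uM_\varphi C_u=M_{\overline\varphi}$ on $L^2$ and $C_uP_u=P_uC_u$, whence $C_uA_\varphi^uC_u=P_uM_{\overline\varphi}|_{K_u}=A_{\overline\varphi}^u=(A_\varphi^u)^*$.
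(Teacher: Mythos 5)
The paper does not prove this lemma at all: it is stated as a quotation from the cited literature (Sarason, C\^amara--Partington, Jurasik--{\L}anucha, Garcia--Putinar), so there is no in-paper argument to compare yours against. Judged on its own, your outline reproduces the standard proofs from those sources and is essentially sound: the kernel computations for (3), (5), (6), the adjoint trick $(A^{u,v}_\varphi)^*=A^{v,u}_{\overline{\varphi}}$ for the second identity in (5), the telescoping series $A=\sum_n S_v^n(A-S_vAS_u^*)(S_u^*)^n$ for the converse of (2) (this is exactly Sarason's argument, transplanted to the asymmetric setting using that $(S_u^*)^n\to 0$ strongly), and the computation $C_uM_\varphi C_u=M_{\overline{\varphi}}$, $C_uP_u=P_uC_u$ for (4) are all correct. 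Two places carry the real weight and are only waved at. First, in the converse of (1), the duality argument a priori only yields $\varphi\in\overline{vH^2+\overline{uH^2}}$; you need either the closedness of $vH^2+\overline{uH^2}$ (true, since $vH^2\perp\overline{zuH^2}$ and $\overline{uH^2}=\mathbb{C}\overline{u}\oplus\overline{zuH^2}$, so the sum is an orthogonal sum plus a one-dimensional space) together with the identification of the closed span of $\{\overline{f}g\}$ with $(vH^2+\overline{uH^2})^\perp$, or else the direct kernel-function computation used in the cited papers; as written, ``a standard Fourier/codimension argument'' is precisely the content of the theorem. Second, in (2) the identification of $\sum_n(S_v^n\phi_1)\otimes(S_u^nk_0^u)$ with $A^{u,v}_{\phi_1}$ (and its companion with $A^{u,v}_{\overline{\phi_2}}$) is the nontrivial summation step; you correctly flag it, and it is carried out in Sarason's paper by testing against reproducing kernels. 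Neither issue is a wrong turn, but both must be filled in for a complete proof.
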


The following result from \cite{y} give  us the necessary and sufficient condition for the product of two ATTOs to be an ATTO.  
\begin{lemma}\label{yag} Let $ {u},v$ be two inner functions. Let $A\in\mathcal{T}(v,w)$ and $B\in\mathcal{T}(u,v)$ be such that $A = A^{v,w}_{\varphi_1+\overline{\psi_1}}$ and $B = A^{u,v}_{\varphi_2+\overline{\psi_2}}$, for some $\varphi_1\in K_w$, $\psi_1,\phi_2\in K_v$, and $ \psi_2\in K_u$. Then $AB\in\mathcal{T}(u,w)$ if and only if
	$$ (\phi_1 \otimes \psi_2)-(P_w v(\phi_1+\overline{\psi_1}) \otimes P_u v(\psi_2+\overline{\phi_2})  )= (\Phi\otimes k^u_0)+(k^w_0\otimes\Psi),$$
	for some $\Phi\in K_w$ and $\Psi\in K_u$.
	\end{lemma}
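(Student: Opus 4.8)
The plan is to apply the characterization of ATTOs in Lemma~\ref{lemma}(2). Since $A,B$ are bounded, $AB$ is a bounded operator from $K_u$ into $K_w$, so by that lemma $AB\in\mathcal{T}(u,w)$ if and only if $AB-S_wABS_u^*$ has the form $\Phi\otimes k_0^u+k_0^w\otimes\Psi$ for some $\Phi\in K_w$, $\Psi\in K_u$; call such operators \emph{admissible}. Thus the whole argument reduces to computing $AB-S_wABS_u^*$ and isolating the part that is \emph{not} automatically admissible.

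First I would record, again from Lemma~\ref{lemma}(2), the two structural identities $A=S_wAS_v^*+(\varphi_1\otimes k_0^v+k_0^w\otimes\psi_1)$ and $B=S_vBS_u^*+(\varphi_2\otimes k_0^u+k_0^v\otimes\psi_2)$. Substituting the first into $AB$ and then using the defect identity $S_v^*S_v=I-\widetilde{k_0^v}\otimes\widetilde{k_0^v}$ to move $S_v^*$ across $B$, a direct manipulation (evaluating rank-one products via $A(f\otimes g)B=(Af)\otimes(B^*g)$ and $S_w(f\otimes g)S_u^*=(S_wf)\otimes(S_ug)$) gives
$$AB-S_wABS_u^*=-\big(S_wA\widetilde{k_0^v}\big)\otimes\big(S_uB^*\widetilde{k_0^v}\big)+S_wAS_v^*\big(\varphi_2\otimes k_0^u+k_0^v\otimes\psi_2\big)+\big(\varphi_1\otimes k_0^v+k_0^w\otimes\psi_1\big)B.$$

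Next I would compute the three pieces explicitly. Using $z\widetilde{k_0^v}=v-v(0)$ together with the projection identities $P_w\overline{\psi_1}=\overline{\psi_1(0)}\,k_0^w$ and $P_u\overline{\varphi_2}=\overline{\varphi_2(0)}\,k_0^u$ (and $S_v^*k_0^v=-\overline{v(0)}\,\widetilde{k_0^v}$), one finds
$$S_wA\widetilde{k_0^v}=P_wv(\varphi_1+\overline{\psi_1})-v(0)\varphi_1+c_wk_0^w,\qquad S_uB^*\widetilde{k_0^v}=P_uv(\psi_2+\overline{\varphi_2})-v(0)\psi_2+c_uk_0^u,$$
for scalars $c_w,c_u$, and likewise $B^*k_0^v=\psi_2+\overline{\varphi_2(0)}\,k_0^u-\overline{v(0)}\,P_uv(\psi_2+\overline{\varphi_2})$. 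Substituting these and expanding every tensor (remembering $f\otimes(\lambda g)=\overline{\lambda}\,f\otimes g$), each resulting term either is visibly admissible or is one of the two ``obstruction'' terms $\varphi_1\otimes\psi_2$ and $-P_wv(\varphi_1+\overline{\psi_1})\otimes P_uv(\psi_2+\overline{\varphi_2})$.

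The crucial and most delicate point is the bookkeeping of the remaining terms. Beyond the admissible ones and the two obstruction terms, the expansion produces ``stray'' rank-one operators that are multiples of $P_wv(\varphi_1+\overline{\psi_1})\otimes\psi_2$, of $\varphi_1\otimes P_uv(\psi_2+\overline{\varphi_2})$, and of $\varphi_1\otimes\psi_2$, none of which is admissible. The heart of the proof is to verify that these cancel in pairs: the stray contributions from the cross term $-(S_wA\widetilde{k_0^v})\otimes(S_uB^*\widetilde{k_0^v})$ are exactly annihilated by those coming from the factor $-\overline{v(0)}(S_wA\widetilde{k_0^v})\otimes\psi_2$ in the second piece and from $-v(0)\varphi_1\otimes P_uv(\psi_2+\overline{\varphi_2})$ in the third (the $\pm|v(0)|^2\varphi_1\otimes\psi_2$ terms likewise cancel, leaving the clean $\varphi_1\otimes\psi_2$). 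Once this is checked, one is left with
$$AB-S_wABS_u^*=\big(\varphi_1\otimes\psi_2-P_wv(\varphi_1+\overline{\psi_1})\otimes P_uv(\psi_2+\overline{\varphi_2})\big)+\big(\Phi'\otimes k_0^u+k_0^w\otimes\Psi'\big)$$
for suitable $\Phi'\in K_w$, $\Psi'\in K_u$. Invoking Lemma~\ref{lemma}(2) once more, $AB\in\mathcal{T}(u,w)$ holds precisely when the left-hand side is admissible; since the last bracket already is, this happens exactly when $\varphi_1\otimes\psi_2-P_wv(\varphi_1+\overline{\psi_1})\otimes P_uv(\psi_2+\overline{\varphi_2})$ is of the form $\Phi\otimes k_0^u+k_0^w\otimes\Psi$, which is the asserted condition. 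I expect the pairwise cancellation of the stray terms to be the main obstacle.
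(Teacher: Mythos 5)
The paper itself gives no proof of this lemma---it is quoted verbatim from \cite{y}---but your argument is the standard Sedlock-type proof via the shift-invariance characterization in Lemma \ref{lemma}(2), which is exactly the route the cited source takes, and it is correct. I checked the key identity $AB-S_wABS_u^*=-(S_wA\widetilde{k_0^v})\otimes(S_uB^*\widetilde{k_0^v})+S_wAS_v^*(\varphi_2\otimes k_0^u+k_0^v\otimes\psi_2)+(\varphi_1\otimes k_0^v+k_0^w\otimes\psi_1)B$, the auxiliary formulas ($S_v^*k_0^v=-\overline{v(0)}\widetilde{k_0^v}$, $P_u\overline{\varphi_2}=\overline{\varphi_2(0)}k_0^u$, the expressions for $S_wA\widetilde{k_0^v}$, $S_uB^*\widetilde{k_0^v}$ and $B^*k_0^v$), and the pairwise cancellation of the stray terms $\pm\overline{v(0)}\,P_wv(\varphi_1+\overline{\psi_1})\otimes\psi_2$, $\pm v(0)\,\varphi_1\otimes P_uv(\psi_2+\overline{\varphi_2})$ and $\pm|v(0)|^2\varphi_1\otimes\psi_2$; everything works out, leaving exactly $\varphi_1\otimes\psi_2-P_wv(\varphi_1+\overline{\psi_1})\otimes P_uv(\psi_2+\overline{\varphi_2})$ modulo operators of the admissible form.
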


We gather in the next Lemma some basic facts about ATHOs, that can be found in \cite{ry,KK,GU} (see also \cite{L2,LM}).
\begin{lemma}\label{lem24} Let $ {u},v$ be two inner functions and  $\phi\in L^2$.
\begin{enumerate}
\item[(1)]$(B^{u,v}_\phi)^* = B^{v,u}_{\widehat{\phi}}.$ 
\item[(2)] $B^{u,v}_\phi = 0$ if and only if $\phi\in H^2 +\overline{u\widehat{v}H^2}$.
\item[(3)]For each $B \in\mathcal{H}( {u,v})$ there exists   $\psi\in K_{u\widehat{v}}$ such that $B = B^u_{\overline{\psi}} $.
\item[(4)]A bounded linear operator $B \in\mathcal{H}( {u,v})$ if and only if there exist $ \phi_1\in K_{v},\phi_2\in K_u$ such
that $$B-S_v^* BS^*_ {u} =\phi_1\otimes k_0^ {u}+\widetilde{k^v_0}\otimes \varphi_2.$$

\item[(5)] If $ \lambda\in\mathbb{D} $ then the operators $\widetilde{k^v_{\overline{\lambda} }}\otimes \widetilde{k^u_\lambda }$ and $k^v_{\overline{\lambda} } \otimes k_\lambda^ {u}$ belong to $ \mathcal{H}( {u},v)$, and, If $  {u} $ and $ v $ have an ADC at $\eta$ and $ \overline{\eta} $, respectively, then the operators  $\widetilde{k^v_{\overline{\eta} }}\otimes \widetilde{k^u_\eta }$ and $k^v_{\overline{\eta} } \otimes k_\eta^ {u}$ also  belong to $ \mathcal{H}( {u},v).$
\end{enumerate}
\end{lemma}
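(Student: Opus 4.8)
The plan is to prove Lemma~\ref{lem24}(5) by directly computing the action of each rank-one operator and exhibiting it as an ATHO, then using the defining relation to handle the boundary (ADC) case. The key technical tool will be the reproducing-kernel identities recorded in the introduction, namely $C_u k_\lambda^u = \widetilde{k_\lambda^u}$ and $k_\eta^u(z) = \overline{u(\eta)}\,\eta\,\widetilde{k_\eta^u}(z)$, together with the characterization in part~(4) of this same Lemma: a bounded operator $B:K_u\to K_v$ lies in $\mathcal{H}(u,v)$ if and only if $B-S_v^* B S_u^* = \phi_1\otimes k_0^u + \widetilde{k_0^v}\otimes\phi_2$ for some $\phi_1\in K_v$, $\phi_2\in K_u$. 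Since the two rank-one operators in question already have rank one, I expect to verify membership either by computing $B-S_v^* B S_u^*$ explicitly and matching it to the required form, or by producing an explicit $L^2$ symbol $\phi$ and checking $B^{u,v}_\phi f = (\text{rank-one operator}) f$ on a dense set.

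First I would treat the interior case $\lambda\in\mathbb{D}$. Taking $B = \widetilde{k^v_{\overline\lambda}}\otimes\widetilde{k^u_\lambda}$, I would compute $S_v^* B S_u^*$ using $S_u^* = $ the adjoint of the compressed shift and the action of $S_v^*$ on $\widetilde{k^v_{\overline\lambda}}$; the natural guess is that $S_u^*$ acts on the right factor essentially as multiplication by $\lambda$ (up to a correction involving $k_0^u$), and likewise $S_v^*$ on the left factor, so that $B - S_v^* B S_u^*$ collapses to a sum of two rank-one terms of precisely the form $\phi_1\otimes k_0^u + \widetilde{k_0^v}\otimes\phi_2$. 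The same scheme applies to $k^v_{\overline\lambda}\otimes k_\lambda^u$. I would carry out these shift computations via the formula $(S_u^* f)(z) = (f(z)-f(0))/z$ applied to the explicit kernels, keeping careful track of the scalar normalizations. Alternatively, and perhaps more cleanly, I would exhibit an explicit symbol: by analogy with Lemma~\ref{lemma}(5) for the ATTO case, one expects symbols built from $u$, $v$ and the factor $1/(z-\lambda)$ (or its conjugate/flip), and then verify the identity $B^{u,v}_\phi = \widetilde{k^v_{\overline\lambda}}\otimes\widetilde{k^u_\lambda}$ by testing both sides against reproducing kernels $k_\mu^u$ and using $\langle k_\mu^u, \cdot\rangle$ to reduce everything to pointwise evaluations.

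For the boundary case, where $u$ has an ADC at $\eta$ and $v$ has an ADC at $\overline\eta$, the strategy is to pass to the limit $\lambda\to\eta$ (respectively $\overline\lambda\to\overline\eta$) in the interior formulas. The ADC hypothesis is exactly what guarantees that $k_\eta^u = \overline{u(\eta)}\,\eta\,\widetilde{k_\eta^u}$ lies in $K_u$ (and similarly $\widetilde{k^v_{\overline\eta}}\in K_v$), so the rank-one operators $\widetilde{k^v_{\overline\eta}}\otimes\widetilde{k^u_\eta}$ and $k^v_{\overline\eta}\otimes k_\eta^u$ are genuinely bounded. I would argue that the defining relation $B - S_v^* B S_u^* = \phi_1\otimes k_0^u + \widetilde{k_0^v}\otimes\phi_2$ is preserved under this nontangential limit, since all the kernels involved converge in $K_u$ and $K_v$ norm to their boundary analogues, so the limiting operator still satisfies the criterion of part~(4) and therefore belongs to $\mathcal{H}(u,v)$.

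The main obstacle I anticipate is the bookkeeping in the shift computation: correctly evaluating $S_v^* B S_u^*$ for the rank-one factors and confirming that the residual $B - S_v^* B S_u^*$ has \emph{exactly} the prescribed two-term form, with the left correction proportional to $\widetilde{k_0^v}$ and the right correction an inner product against $k_0^u$. The asymmetry (different inner functions $u$ and $v$, and the conjugate index $\overline\lambda$ appearing on the $v$-side) means the symmetric intuition from the THO case in \cite{KK} must be tracked carefully, and one must verify that the defect operators $I - S_u^* S_u = \widetilde{k_0^u}\otimes\widetilde{k_0^u}$ and $I - S_v S_v^* = k_0^v\otimes k_0^v$ contribute the correct rank-one remainders rather than spurious extra terms. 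Once the interior identity is pinned down, the ADC limiting argument should be routine given the continuity of the reproducing kernels guaranteed by the angular-derivative hypothesis.
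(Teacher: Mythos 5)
You should know at the outset that the paper gives no proof of Lemma~\ref{lem24}: it is quoted as a collection of known facts with references to \cite{ry,KK,GU}, so there is no internal argument to compare against. That said, your first route (verify the criterion of part~(4) directly on the rank-one operators) is correct and does close, with one bookkeeping correction: since $(f\otimes g)S_u^*=f\otimes S_u g$ and $S_v^*(f\otimes g)=(S_v^*f)\otimes g$, it is $S_u$, not $S_u^*$, that hits the right-hand factor. The identities that actually make the computation collapse are not the defect operators you cite but rather $S_u\widetilde{k_\lambda^u}=\lambda\widetilde{k_\lambda^u}-u(\lambda)k_0^u$, $S_v^*k_\mu^v=\overline{\mu}\,k_\mu^v-\overline{v(\mu)}\,\widetilde{k_0^v}$, and the resolvent-type relations $(I-\mu S_v^*)\widetilde{k_\mu^v}=\widetilde{k_0^v}$ and $(I-\overline{\mu}S_u)k_\mu^u=k_0^u$. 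With these one gets, for $B=\widetilde{k^v_{\overline{\lambda}}}\otimes\widetilde{k^u_\lambda}$, that $B-S_v^*BS_u^*=\overline{u(\lambda)}\,(S_v^*\widetilde{k^v_{\overline{\lambda}}})\otimes k_0^u+\widetilde{k_0^v}\otimes\widetilde{k_\lambda^u}$, and for $B=k^v_{\overline{\lambda}}\otimes k_\lambda^u$ that $B-S_v^*BS_u^*=k^v_{\overline{\lambda}}\otimes k_0^u+\widetilde{k_0^v}\otimes\bigl(v(\overline{\lambda})S_uk_\lambda^u\bigr)$ --- exactly the prescribed two-term form, with no spurious remainders. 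Your ADC limiting argument is also fine provided you use that an angular derivative at $\eta$ gives \emph{norm} convergence $k_\lambda^u\to k_\eta^u$ (weak convergence plus $\|k_\lambda^u\|^2=\tfrac{1-|u(\lambda)|^2}{1-|\lambda|^2}\to|u'(\eta)|=\|k_\eta^u\|^2$), so the rank-one operators and the functions $\phi_1,\phi_2$ converge in norm and the part-(4) identity passes to the limit; alternatively the boundary case is immediate from the interior formulas evaluated at $\eta$, since $k_\eta^u=\overline{u(\eta)}\eta\widetilde{k_\eta^u}$ makes the two boundary operators scalar multiples of each other. Finally, note that the paper's own Section~3 machinery gives a shorter proof requiring no shift computation: by Lemma~\ref{Lam2}(5), $A\in\mathcal{T}(\widehat{u},v)$ implies $C_vAU\in\mathcal{H}(u,v)$, and $C_v(f\otimes g)U=(C_vf)\otimes(Ug)$, so applying this to the rank-one ATTOs of Lemma~\ref{lemma}(5)--(6) (with $\widehat{u}$ in place of $u$ and $\lambda$ replaced by $\overline{\lambda}$) produces all four operators of part~(5) together with explicit symbols.
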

We end this section with  the following simple observations.
\begin{proposition}Let $ {u}$ be an inner function and $\lambda\in\mathbb{D}$. Then
\begin{enumerate} 
\item[(1)]  $U k_\lambda^u  =k_{\overline{\lambda}}^{\widehat{u}}$.
\item[(2)] $U \widetilde{k_\lambda^u}  =\widetilde{k_{\overline{\lambda}}^{\widehat{u}}}$.

\end{enumerate}
\end{proposition}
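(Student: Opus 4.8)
The plan is to verify both identities by direct substitution into the explicit formulas for the reproducing kernel $k_\lambda^u$ and its conjugate $\widetilde{k_\lambda^u}$, using only the definition $Uf(z)=\overline{f(\overline{z})}$ on $\mathbb{T}$. The single auxiliary fact I will need is the compatibility of $u$ and $\widehat{u}$ at conjugate points: from $\widehat{u}(z)=\overline{u(\overline{z})}$ one reads off $\widehat{u}(\overline{\lambda})=\overline{u(\lambda)}$, equivalently $u(\lambda)=\overline{\widehat{u}(\overline{\lambda})}$. Everything else is bookkeeping of complex conjugates, so I expect no genuine obstacle; the only thing requiring care is to keep track of how conjugating the \emph{argument} $z$ and conjugating the \emph{value} act together, so that in the final answer the base point $\lambda$ is replaced by $\overline{\lambda}$ and the inner function $u$ by $\widehat{u}$ simultaneously.

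For (1) I would begin from $U k_\lambda^u(z)=\overline{k_\lambda^u(\overline{z})}$ and insert the kernel formula, obtaining
$$U k_\lambda^u(z)=\overline{\left(\frac{1-\overline{u(\lambda)}\,u(\overline{z})}{1-\overline{\lambda}\,\overline{z}}\right)}=\frac{1-u(\lambda)\,\overline{u(\overline{z})}}{1-\lambda z}.$$
Recognizing $\overline{u(\overline{z})}=\widehat{u}(z)$ in the numerator and rewriting $u(\lambda)=\overline{\widehat{u}(\overline{\lambda})}$, the right-hand side becomes $\dfrac{1-\overline{\widehat{u}(\overline{\lambda})}\,\widehat{u}(z)}{1-\overline{\overline{\lambda}}\,z}$, which is exactly $k_{\overline{\lambda}}^{\widehat{u}}(z)$.

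For (2) the same strategy applies to the conjugate kernel. Writing $U\widetilde{k_\lambda^u}(z)=\overline{\widetilde{k_\lambda^u}(\overline{z})}$ and substituting the formula $\widetilde{k_\lambda^u}(z)=\frac{u(z)-u(\lambda)}{z-\lambda}$ gives
$$U\widetilde{k_\lambda^u}(z)=\overline{\left(\frac{u(\overline{z})-u(\lambda)}{\overline{z}-\lambda}\right)}=\frac{\overline{u(\overline{z})}-\overline{u(\lambda)}}{z-\overline{\lambda}}=\frac{\widehat{u}(z)-\widehat{u}(\overline{\lambda})}{z-\overline{\lambda}},$$
where in the last step I use $\overline{u(\overline{z})}=\widehat{u}(z)$ together with $\overline{u(\lambda)}=\widehat{u}(\overline{\lambda})$. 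This is precisely $\widetilde{k_{\overline{\lambda}}^{\widehat{u}}}(z)$, completing the proof. All steps are identities between rational functions valid on $\mathbb{T}$, so the equalities hold as elements of $L^2(\mathbb{T})$; the main point to watch, as noted above, is the simultaneous swap $\lambda\mapsto\overline{\lambda}$ and $u\mapsto\widehat{u}$ forced by the two conjugations.
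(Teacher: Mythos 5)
Your proof is correct and follows essentially the same route as the paper: both compute $Uk_\lambda^u$ and $U\widetilde{k_\lambda^u}$ by direct substitution into the explicit kernel formulas, using $\overline{u(\overline{z})}=\widehat{u}(z)$ and $\overline{u(\lambda)}=\widehat{u}(\overline{\lambda})$ to re-express the result as the kernels for $\widehat{u}$ at $\overline{\lambda}$. Your write-up is just a slightly more explicit version of the paper's one-line computations.
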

\begin{proof}We have \begin{enumerate} 
\item[(1)] $U k_\lambda^u(z)  =  \frac{1 -  {u}(\lambda)  \widehat{u}(z)}{1 - \lambda z}=\frac{1 - \overline{ \widehat{u}(\overline{\lambda})}  \widehat{u}(z)}{1 - \overline{\overline{\lambda}} z}=k_{\overline{\lambda}}^{\widehat{u}}.$
\item[(2)]$U \widetilde{k_\lambda^u}  =U(\frac{u(z)-u(\lambda)}{z-\lambda})=\frac{\widehat{u}-\overline{u(\lambda)}}{z-\overline{\lambda}}=\frac{\widehat{u}-\widehat{u}(\overline{\lambda})}{z-\overline{\lambda}}=\widetilde{k_{\overline{\lambda}}^{\widehat{u}}}$.

\end{enumerate}
\end{proof}
\section{Relation between  asymmetric truncated Hankel and Toeplitz operators.}\label{31}
Authors in \cite{GU2} have given the connection between ATHOs and ATTOs, which enables us to transfer the known results for ATTOs to their Hankel analogues.  We start with the following results. 
\begin{lemma}\cite{GU2}\label{Lam1}	Let $u,v$ be two inner functions and  $\phi\in L^2$. Then 
\begin{enumerate}
\item $C_v A_\phi^{u,v} C_u =A^{u,v}_{\overline{u}v\overline{\phi}}$,
\item $U A_\phi^{u,v}U =A^{\widehat{u},\widehat{v}}_{\widehat{\phi}}$,
\item $C_v B_\phi^{u,v} C_u = B^{u,v}_{\overline{u\widehat{v}\phi}}$,
\item $U B_\phi^{u,v} U = B^{\widehat{u},\widehat{v}}_{\widehat{\phi}}$,
\item $ C_v A_\phi^{u,v} U=B^{\widehat{u},v}_{\overline{\widehat{v}}\widehat{\phi}}$,
\item $ U A_\phi^{u,v} C_u=B^{u,\widehat{v}}_{\overline{u}\overline{\phi}}$,
\item $U B_\phi^{u,v} C_u= A^{u,\widehat{v}}_{\overline{u\phi}}$,
\item $ C_vB_\phi^{u,v} U=A^{\widehat{u},v}_{v\widehat{\phi}}$.
\end{enumerate}
\end{lemma}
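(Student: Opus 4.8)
The plan is to reduce all eight formulas to a short list of commutation relations among the operators $P$, $P_u$, $J$, $U$ and the multiplications $M_\phi$, and then to read each identity off by routine symbol bookkeeping. Two preliminary simplifications organise everything. First, since $JP=(I-P)J$ (equivalently $J(I-P)=PJ$) and $P_vP=P_v$ because $K_v\subset H^2$, the Hankel operator collapses to $B^{u,v}_\phi=P_vJM_\phi$ on $K_u$, while trivially $A^{u,v}_\phi=P_vM_\phi$ on $K_u$. Second, a one-line boundary computation gives $(JUf)(z)=\bar z\,\overline{f(z)}$, so that the conjugation factors as $C_u=M_uJU$ on $K_u$, and likewise $C_v=M_vJU$ on $K_v$. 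After these two steps every operator in the statement is written purely through $P_\bullet$, $J$, $U$ and $M_\bullet$, and the proof becomes a matter of commuting these past one another.

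The commutation rules I would record once and reuse are, for $U$: $UP=PU$, $UM_\phi=M_{\widehat\phi}U$, $UP_u=P_{\widehat u}U$ and $UJ=JU$ (hence $UJU=J$); and for $J$: $JP=(I-P)J$ and $JM_\phi=M_{\check\phi}J$, where $\check\phi(z):=\phi(\bar z)$, whence also $JM_\phi J=M_{\check\phi}$ and $M_\phi J=JM_{\check\phi}$. Alongside these I need the symbol dictionary $\check{\widehat\phi}=\widehat{\check\phi}=\overline\phi$, together with $\check{\widehat v}=\overline v$, $\check{\overline{\widehat v}}=v$ and $\check{\overline v}=\widehat v$, and the unimodular cancellations $v\overline v=1$ and $\widehat v\,\overline{\widehat v}=1$ on $\mathbb T$. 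Each entry is a single substitution $z\mapsto\bar z$ or a coefficient conjugation and is checked in one line.

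With this toolkit the eight identities split according to the net number of flips $J$ they contain: an even number leaves an ATTO, namely identities (1), (2), (7) and (8), and an odd number leaves an ATHO, namely (3), (4), (5) and (6), which already accounts for the shape of the right-hand sides. Identities (2) and (4) use only the $U$-rules and are one line each, turning $P_vM_\phi$ and $P_vJM_\phi$ into $P_{\widehat v}M_{\widehat\phi}$ and $P_{\widehat v}JM_{\widehat\phi}$. In (6) the single flip (from $C_u$) ends up trailing and is matched to the Hankel flip directly through $M_{\widehat\phi\widehat u}J=JM_{\overline{u\phi}}$, while in (7) the two flips (from $B$ and $C_u$) fall adjacent and collapse through $JM_{\phi u}J=M_{\check{(\phi u)}}$, leaving $P_{\widehat v}M_{\overline{u\phi}}=A^{u,\widehat v}_{\overline{u\phi}}$; both are short.

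The substance lies in the remaining four, where a flip becomes entangled with the projection $P_{\widehat v}$. In (3) and (5) a single flip sits to the left of $P_{\widehat v}$, producing an expression $M_vJP_{\widehat v}M_\psi$; here I would expand $P_{\widehat v}=P-M_{\widehat v}PM_{\overline{\widehat v}}$, push $J$ through each summand using $JP=(I-P)J$ and $JM_{\widehat v}=M_{\overline v}J$, cancel the inner factor by $v\overline v=1$, and recognise the outcome as the advertised ATHO via the elementary identity $P_v(vh)=P(vh)-vP(h)$. In (1) and (8) the projection is instead sandwiched between two flips, yielding $M_vJP_{\widehat v}M_\psi J$ and $M_v(JP_{\widehat v}J)M_\psi$; the one extra ingredient is $JPJ=I-P$, after which the same recognition $P_v(vh)=P(vh)-vP(h)$ produces the stated ATTO. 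I expect the \emph{only} real obstacle to be this symbol bookkeeping: one must keep the two reflections $\widehat{\,\cdot\,}$ (coefficient conjugation from $U$) and $\check{\,\cdot\,}$ (argument flip from $J$) apart, apply the unimodular cancellations at the right moment, and track at each stage which model space the operator currently lands in, since $U$ and the $C_\bullet$ interchange $u\leftrightarrow\widehat u$ and $v\leftrightarrow\widehat v$. Carrying out one representative case of each type in full, say (1) and (5), pins down the conventions, and the other six then follow the identical template.
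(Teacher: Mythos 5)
The paper offers no proof of this lemma: it is imported verbatim from \cite{GU2} and used as a black box, so there is nothing internal to compare you against. Judged on its own terms, your argument is correct and complete in outline. The two reductions you set up at the start are exactly the right ones: $J(I-P)=PJ$ together with $P_vP=P_v$ does collapse $B^{u,v}_\phi$ to $P_vJM_\phi$ on $K_u$, and the factorization $C_u=M_uJU$ (from $(JUf)(z)=\bar z\,\overline{f(z)}$) turns every left-hand side into a word in $P_\bullet$, $J$, $U$, $M_\bullet$. Your commutation table and symbol dictionary are all verifiable in one line each, and I spot-checked the two hardest cases: for (1), pushing the $U$'s through gives $M_vJP_{\widehat v}M_{\widehat{\phi u}}J$, and expanding $P_{\widehat v}=P-M_{\widehat v}PM_{\overline{\widehat v}}$ with $JPJ=I-P$ yields, on $h=\overline{\phi u}f$, the combination $v(I-P)h-(I-P)(vh)=P(vh)-vPh=P_v(vh)$, i.e.\ $A^{u,v}_{\overline u v\overline\phi}f$; for (5), the same expansion of $M_vJP_{\widehat v}M_{\widehat\phi}$ lands on $P_v\bigl(v\,\overline z\,\overline\phi\,\check f\bigr)$, which matches $P_vJ\bigl(\overline{\widehat v}\widehat\phi f\bigr)$ after the substitutions $\check{\overline{\widehat v}}=v$ and $\check{\widehat\phi}=\overline\phi$. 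The parity count of flips correctly predicts which side is a Toeplitz-type and which a Hankel-type operator in all eight cases, and the pivotal recognition $P_v(vh)=P(vh)-vP(h)$ is exactly the identity that closes cases (1), (3), (5), (8). One presentational caution: since $U$ and $C_\bullet$ are antilinear, be explicit that rules such as $UM_\phi=M_{\widehat\phi}U$ conjugate scalars, and keep the domain bookkeeping ($K_u$ versus $K_{\widehat u}$, etc.) visible when you write out the representative cases, as you propose to do.
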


\begin{lemma}\cite{GU2}\label{Lam2}
Let $u,v$ be two inner functions $A$ and $B$ be bounded linear operators from $K_u$ into $K_v$. Then
\begin{enumerate}
\item[(1)]$A\in\mathcal{T}( {u,v})$ if and only if $C_vAC_u\in\mathcal{T}( {u},v)$,
\item[(2)] $B\in\mathcal{H}( {u},v)$ if and only if $C_vBC_u\in\mathcal{H}( {u},v)$,
\item[(3)]$A\in\mathcal{T}( {u},v)$ if and only if $UAU\in\mathcal{T}( {\widehat{u}},\widehat{v})$,
\item[(4)]$B\in\mathcal{H}( {u},v)$ if and only if $UBU\in\mathcal{H}( {\widehat{u}},\widehat{v})$,
\item[(5)]$A\in\mathcal{T}( {u},v)$ if and only if $C_vAU\in\mathcal{H}( {\widehat{u}},v)$,
\item[(6)]$B\in\mathcal{H}( {u},v)$ if and only if $UBC\in\mathcal{T}( {u,\widehat{v}})$.
\end{enumerate}
\end{lemma}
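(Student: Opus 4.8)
The plan is to deduce each of the six equivalences directly from the matching identity (or pair of identities) of Lemma~\ref{Lam1}, using only the structural facts that $C_u,C_v$ and $U$ are conjugate-linear isometric involutions. Concretely, I will use repeatedly that $C_u^2=C_v^2=U^2=I$ on the relevant model spaces, that each of these maps is an isometry (so it preserves boundedness), and that composing a linear operator with an even number of these conjugate-linear maps yields a bounded \emph{linear} operator between the correct spaces; in particular, by the result of \cite{CG} recalled in the introduction, $U$ maps $K_u$ onto $K_{\widehat u}$ and $K_{\widehat u}$ onto $K_u$, and $\widehat{\widehat u}=u$.

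For the ``self-conjugate'' items (1)--(4) the two implications collapse into a single computation. For (1), if $A\in\mathcal{T}(u,v)$ then $A=A_\phi^{u,v}$ for some $\phi\in L^2$, and Lemma~\ref{Lam1}(1) gives $C_vAC_u=A^{u,v}_{\overline u v\overline\phi}\in\mathcal{T}(u,v)$, the operator being bounded because $C_u,C_v$ are isometries. The converse is immediate: applying this to the operator $C_vAC_u$ and using $C_u^2=C_v^2=I$ returns $C_v(C_vAC_u)C_u=A\in\mathcal{T}(u,v)$. Items (2), (3) and (4) are handled in exactly the same way, invoking Lemma~\ref{Lam1}(3), (2) and (4) respectively; for (3) and (4) one also uses $\widehat{\widehat u}=u$ and $\widehat{\widehat v}=v$ to close the loop, since $U$ sends the pair $(u,v)$ to $(\widehat u,\widehat v)$.

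The ``cross'' items (5) and (6), which interchange the Toeplitz and Hankel classes, use two mutually inverse identities of Lemma~\ref{Lam1}. For (5), the forward direction is Lemma~\ref{Lam1}(5): writing $A=A_\phi^{u,v}$ gives $C_vAU=B^{\widehat u,v}_{\overline{\widehat v}\,\widehat\phi}\in\mathcal{H}(\widehat u,v)$. For the converse, suppose $C_vAU=B\in\mathcal{H}(\widehat u,v)$, so $B=B_\psi^{\widehat u,v}$ for some symbol $\psi$; applying Lemma~\ref{Lam1}(8) with $u$ replaced by $\widehat u$ yields $C_vBU=A^{\widehat{\widehat u},v}_{v\widehat\psi}=A^{u,v}_{v\widehat\psi}\in\mathcal{T}(u,v)$, and since $C_v(C_vAU)U=A$ this operator is exactly $A$, proving $A\in\mathcal{T}(u,v)$. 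Item (6) is symmetric: the forward direction uses Lemma~\ref{Lam1}(7) to get $UBC_u=A^{u,\widehat v}_{\overline{u\phi}}\in\mathcal{T}(u,\widehat v)$, and the converse uses Lemma~\ref{Lam1}(6) with $v$ replaced by $\widehat v$, together with $\widehat{\widehat v}=v$ and $U^2=C_u^2=I$.

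I do not expect a genuine obstacle here: the argument is entirely formal once Lemma~\ref{Lam1} is in hand, and isometry supplies boundedness at no cost while every ATTO and ATHO carries a symbol, so the identities of Lemma~\ref{Lam1} apply verbatim. The only point demanding care is the bookkeeping --- keeping track of the hat operation on the inner functions and of the order in which the conjugate-linear maps $C_u,C_v,U$ are composed --- so that at each step the domain and codomain match the claimed class and the composite operator is linear rather than conjugate-linear. I would therefore verify the two facts $\widehat{\widehat u}=u$ and ``$U$ swaps $K_u$ and $K_{\widehat u}$'' before assembling the six parts.
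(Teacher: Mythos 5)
The paper does not prove this lemma at all: it is quoted verbatim from \cite{GU2}, so there is no in-paper argument to compare against. Your derivation from Lemma~\ref{Lam1} is correct and is the standard one --- the forward directions are immediate from the symbol identities, and the converses follow by applying the same identities a second time together with $C_u^2=C_v^2=U^2=I$, $\widehat{\widehat{u}}=u$, and the fact that $U$ interchanges $K_u$ and $K_{\widehat{u}}$; the only caveat is the obvious typographical one, namely that ``$UBC$'' in item (6) should read $UBC_u$, which you have silently and correctly repaired.
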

The following corollary gathers some consequences of Lemma 3.1 and Lemma 3.2.
\begin{corollary}Let $ {u}$ be an inner function and $\alpha\in\overline{\mathbb{D}}.$ Then	we have,
\begin{enumerate}
\item[(1)]  $U \mathcal{B}_{ {u}}^\alpha U  =\mathcal{B}_{ \widehat{u}}^{\overline{\alpha}}$ for every $\alpha\in \widehat{\mathbb{C}}$. 
\item [(2)]$ C_u U=B^{\widehat{u},u}_{\overline{\widehat{u}}}$, and $U C_u=B^{u,\widehat{u}}_{\overline{u}}.$ Moreover, $C_{\widehat{u}}U=UC_u$ and $C_uU=UC_{\widehat{u}}$.
\item [(3)]$U S_{\widehat{u}}^\alpha U =S_{u}^{\overline{\alpha}}$.
\item [(4)]$C_u S_u^\alpha C_u =(S_{u}^\alpha)^*$.
\end{enumerate}
\end{corollary}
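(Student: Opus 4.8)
The plan is to derive all four identities from the intertwining relations of Lemma~\ref{Lam1}, the commutant descriptions in Lemma~\ref{pro}, and the action of the conjugations on reproducing kernels recorded in the final Proposition of the preliminaries; I would prove them in the order (2), (4), (3), (1), reserving (1) for last because it rests on (3).

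Part (2) is essentially a specialization. Setting $v=u$ and $\phi\equiv 1$ in Lemma~\ref{Lam1}(5), and using $A_1^{u,u}=I_{K_u}$ together with $\widehat 1=1$, gives $C_uU=B^{\widehat u,u}_{\overline{\widehat u}}$ at once; the companion identity $UC_u=B^{u,\widehat u}_{\overline u}$ comes the same way from part (6). For the ``moreover'' statement I would compare $C_{\widehat u}U$ and $UC_u$ directly on boundary functions: writing $C_uf(z)=\overline{f(z)z}\,u(z)$ and $Uf(z)=\overline{f(\overline z)}$, one checks that both operators send $f$ to $f(\overline z)\,\overline z\,\widehat u(z)$, so $C_{\widehat u}U=UC_u$, and replacing $u$ by $\widehat u$ yields $C_uU=UC_{\widehat u}$.

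For (3) and (4) the key is the rank-one structure $S_u^\alpha=S_u+c\,k_0^u\otimes\widetilde{k_0^u}$ with $c=\alpha/(1-\alpha\overline{u(0)})$, together with the elementary fact that a conjugation $C$ satisfies $C(f\otimes g)C=Cf\otimes Cg$ and conjugates scalars. For (4) I would invoke Lemma~\ref{lemma}(4) to get $C_uS_uC_u=S_u^*$ (since $S_u=A_z^u\in\mathcal T(u)$), note $C_uk_0^u=\widetilde{k_0^u}$ and $C_u\widetilde{k_0^u}=k_0^u$, so that $C_u(k_0^u\otimes\widetilde{k_0^u})C_u=\widetilde{k_0^u}\otimes k_0^u=(k_0^u\otimes\widetilde{k_0^u})^*$; assembling these with $C_u(c\,R)C_u=\overline c\,C_uRC_u$ reproduces exactly $(S_u^\alpha)^*$. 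For (3) I would first check $US_{\widehat u}U=S_u$ from Lemma~\ref{Lam1}(2) with symbol $z$ (using $\widehat z=z$), then use $Uk_0^{\widehat u}=k_0^u$ and $U\widetilde{k_0^{\widehat u}}=\widetilde{k_0^u}$ from the Proposition to obtain $U(k_0^{\widehat u}\otimes\widetilde{k_0^{\widehat u}})U=k_0^u\otimes\widetilde{k_0^u}$; the remaining point is bookkeeping the coefficient, where the antilinearity of $U$ conjugates $c$ and the identity $\widehat u(0)=\overline{u(0)}$ turns the resulting $\overline c$ into $\overline\alpha/(1-\overline\alpha\,\overline{u(0)})$, matching the coefficient of $S_u^{\overline\alpha}$.

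Finally, for (1) I would argue through the commutant descriptions of Lemma~\ref{pro}. Since $UAU$ is linear when $A$ is and $U^2=I$, conjugating a commutation relation by $U$ preserves it; so for $|\alpha|\le 1$ the identity $US_u^\alpha U=S_{\widehat u}^{\overline\alpha}$ (from (3), after swapping $u\leftrightarrow\widehat u$) shows that $A\in\mathcal B_u^\alpha=\{S_u^\alpha\}'$ forces $UAU\in\{S_{\widehat u}^{\overline\alpha}\}'=\mathcal B_{\widehat u}^{\overline\alpha}$, and for $|\alpha|>1$ the same works after passing to adjoints, using $(UAU)^*=UA^*U$. The case $\alpha=\infty$ is handled most easily from Lemma~\ref{Lam1}(2): $UA_{\overline\phi}^uU=A_{\overline{\widehat\phi}}^{\widehat u}$ with $\widehat\phi\in K_{\widehat u}$. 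Each such inclusion gives $U\mathcal B_u^\alpha U\subseteq\mathcal B_{\widehat u}^{\overline\alpha}$, and applying the same argument to the pair $(\widehat u,\overline\alpha)$ and conjugating by $U$ yields the reverse inclusion, hence equality. The main obstacle I anticipate is purely clerical: keeping consistent the complex conjugations introduced by the antilinearity of $U$ and $C_u$, and verifying the coefficient identity in (3) through $\widehat u(0)=\overline{u(0)}$; there is no conceptual difficulty once (3) is in hand.
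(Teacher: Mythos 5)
Your proof is correct, and for parts (2) and (3) it coincides with the paper's argument: both specialize Lemma~\ref{Lam1}(5)--(6) at $\phi=1$ and both expand $S^\alpha$ as shift plus rank-one and push $U$ through using $Uk_0^u=k_0^{\widehat u}$, $U\widetilde{k_0^u}=\widetilde{k_0^{\widehat u}}$ and $\widehat u(0)=\overline{u(0)}$. The divergence is in parts (1) and (4). For (4) the paper simply observes that $S_u^\alpha\in\mathcal T(u)$ and invokes the $C_u$-symmetry of all TTOs in one line; your decomposition into $C_uS_uC_u=S_u^*$ plus the explicit rank-one computation $C_u(k_0^u\otimes\widetilde{k_0^u})C_u=\widetilde{k_0^u}\otimes k_0^u$ proves the same thing with more bookkeeping but no new input. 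For (1) the routes are genuinely different: the paper works at the level of Sedlock symbols, computing $UA^u_{\phi+\alpha\overline{S_uC_u\phi}+c}U=A^{\widehat u}_{\widehat\phi+\overline\alpha\,\overline{S_{\widehat u}C_{\widehat u}\widehat\phi}+\overline c}$ via Lemma~\ref{Lam1}(2) (which tacitly requires the identity $U\,\overline{S_uC_u\phi}=\overline{S_{\widehat u}C_{\widehat u}\widehat\phi}$, itself resting on $UC_uU=C_{\widehat u}$ and $US_uU=S_{\widehat u}$), whereas you use the commutant descriptions $\mathcal B_u^\alpha=\{S_u^\alpha\}'$ (resp.\ $\{(S_u^{1/\overline\alpha})^*\}'$) from Lemma~\ref{pro} together with part (3) and $(UAU)^*=UA^*U$. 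Your route cleanly sidesteps the symbol manipulation and makes the dependence on (3) explicit, at the cost of splitting into the cases $|\alpha|\le 1$, $|\alpha|>1$ and $\alpha=\infty$; both arguments are sound.
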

\begin{proof} \item[(1)] Let $\alpha\in \mathbb{C}$ and  $A\in \mathcal{B}_{ {u}}^\alpha ,$ then  $A=A^{ {u}}_{\phi + \alpha \overline{S_ {u} C_ {u}\varphi} +c}$ with  $\phi \in K_{ {u}}$ and $c\in\mathbb{C}$, it follows  then from Lemma \ref{Lam1} that 
$$ UA^{ {u}}_{\phi + \alpha \overline{S_ {u} C_ {u}\varphi} +c}U=A^{ \widehat{u}}_{U(\phi + \alpha \overline{S_ {u} C_ {u}\varphi} +c)}=
A^{ \widehat{u}}_{\widehat{\phi} + \overline{\alpha}\overline{S_ {\widehat{u}} C_ {\widehat{u}}\widehat{\varphi}} +\overline{c}}\in\mathcal{B}_{ \widehat{u}}^{\overline{\alpha}}. $$
If $\alpha=\infty$, we have 
 $
A\in \mathcal{B}_{ {u}}^\infty $, then  $A = A^{ {u}}_{ \overline{\varphi}}, \phi \in K_{ {u}},$ and  $U A^{ {u}}_{ \overline{\varphi}}U=A^{ \widehat{u}}_{ \overline{\widehat{\varphi}}} \in\mathcal{B}_{ \widehat{u}}^{\infty}.$ \\
\item [(2)] Applying  Lemma \ref{Lam1} ((5) and  (6)) to the case $\phi=1$, we get,  $C_{\widehat{u}}U=B^{\widehat{u},u}_{\overline{\widehat{u}}}=UC_u$ and $C_uU=B^{\widehat{u},u}_{\overline{\widehat{u}}}=UC_{\widehat{u}}$.

\item [(3)]$U S_u^\alpha U =US_uU+\frac{\overline{\alpha}}{1-u(0)\overline{\alpha}}Uk_0^u\otimes U\widetilde{k_0^u}=S_{\widehat{u}}+\frac{\overline{\alpha}}{1-{\overline{\widehat{u}(0)}}\overline{\alpha}}k_0^{\widehat{u}}\otimes \widetilde{k_0^{\widehat{u}}}=S_{\widehat{u}}^{\overline{\alpha}}.$
\item [(4)]We have  $S_u^\alpha\in\mathcal{T}( {u})$, then $S_u^\alpha$ is $C_u$-symmetric, therefore $C_u S_u^\alpha C_u =(S_{u}^\alpha)^*$.

\end{proof}

\begin{corollary}\label{remark}Let $ {u},v$ be two  inner functions and $\phi\in L^2$. Then 
\item [(1)] $ C_{\widehat{v}}UB_\phi^{u,v} =A^{u,\widehat{v}}_{\widehat{v}\phi}$.
\item [(2)] $ B_\phi^{u,v}C_{u}U =A^{\widehat{u},v}_{\overline{\widehat{u}\widehat{\phi}}}$.
\item [(3)] If $u$ is real symmetric and  $D=D^*=C_{u}U=B^{u}_{\overline{u}}$,  then 
 $ DB_\phi^{u} =A^{u}_{u\phi}$,
, $ B_\phi^{u}D =A^{u}_{\overline{u\widehat{\phi}}}$
and $ DS_u^\alpha =(S_u^{\overline{\alpha}})^* D$.
\end{corollary}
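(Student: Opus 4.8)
The plan is to derive each identity by specializing the general composition formulas of Lemma~\ref{Lam1} to the relevant inner functions, and then simplify using the hypotheses on $u$. Throughout I will treat $C_{\widehat v}U$, $C_uU$, etc. as composite operators built from the conjugation $U$ and the model-space conjugations, and I will read off the symbol transformations directly from parts (5)--(8) of Lemma~\ref{Lam1}.

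For part (1), I would start from Lemma~\ref{Lam1}(7), which gives $UB_\phi^{u,v}C_u = A^{u,\widehat v}_{\overline{u\phi}}$. To isolate $C_{\widehat v}UB_\phi^{u,v}$ I need to relate the right composition by $C_u$ and left composition by $U$ to the operator $C_{\widehat v}U$ acting on the left. The cleaner route is to apply Corollary~1.9(2) (the identities $C_{\widehat u}U=UC_u$ and the ATHO representations of $C_uU$), so that $C_{\widehat v}U$ can be rewritten in terms of $UC_v$; then Lemma~\ref{Lam1}(8), namely $C_vB_\phi^{u,v}U = A^{\widehat u,v}_{v\widehat\phi}$, together with the involutive property $U^2=I$ and $C_v^2=I$, should collapse the composition to an ATTO. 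I expect the symbol to emerge as $\widehat v\phi$ after tracking the conjugations $U(\overline{u\phi})$ and using $U\widehat{g}=g$. I would verify the target space is $K_{\widehat v}$ by checking that $C_{\widehat v}$ maps into $K_{\widehat v}$ and that $U$ sends $K_v$ to $K_{\widehat v}$.

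For part (2), the symmetric computation applies: I would begin from Lemma~\ref{Lam1}(6), $UA_\phi^{u,v}C_u = B^{u,\widehat v}_{\overline u\,\overline\phi}$, or more directly from part (5), $C_vA_\phi^{u,v}U = B^{\widehat u,v}_{\overline{\widehat v}\widehat\phi}$, and invert the ATHO-to-ATTO correspondence. Writing $B_\phi^{u,v}C_uU$ as a composition and substituting the representation $C_uU=B^{u,\widehat u}_{\overline u}$ from Corollary~1.9(2), I would apply Lemma~\ref{Lam1}(8) once more and simplify $\widehat{\phantom{x}}$ and conjugation bars until the symbol $\overline{\widehat u\,\widehat\phi}$ appears. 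The main bookkeeping obstacle here is keeping the inner functions straight: the hat operation swaps $u\leftrightarrow\widehat u$ and the conjugation conjugates symbols, so I must confirm the composite lands in $\mathcal{T}(\widehat u,v)$ with exactly the claimed symbol rather than its complex conjugate or hatted variant.

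For part (3), I would impose $u=\widehat u$ (real symmetry), which forces $K_{\widehat u}=K_u$ and $C_{\widehat u}=C_u$, so that $D:=C_uU=B^u_{\overline u}$ becomes an endomorphism of $K_u$; its self-adjointness $D=D^*$ follows from Lemma~\ref{lem24}(1) since $\widehat{\overline u}=\overline{\widehat u}=\overline u$, and $D^2=C_uUC_uU=C_u(C_uU)U=C_u^2U^2=I$ confirms $D$ is a self-adjoint involution. The three identities $DB_\phi^{u}=A^u_{u\phi}$ and $B_\phi^uD=A^u_{\overline{u\widehat\phi}}$ then drop out of parts (1) and (2) by setting $v=u=\widehat u$ and noting $D=C_uU$; the last relation $DS_u^\alpha=(S_u^{\overline\alpha})^*D$ I would obtain by combining $US_{\widehat u}^\alpha U=S_u^{\overline\alpha}$ from Corollary~1.9(3) with the $C_u$-symmetry $C_uS_u^\alpha C_u=(S_u^\alpha)^*$ from Corollary~1.9(4), conjugating one by $D=C_uU$ and using $u=\widehat u$ to replace $S_{\widehat u}^{\overline\alpha}$ by $S_u^{\overline\alpha}$. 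I expect the intertwining relation to be the most delicate step, since it requires chaining both the $U$-conjugation formula and the $C_u$-symmetry in the correct order and carefully tracking whether $\alpha$ or $\overline\alpha$ appears on each side.
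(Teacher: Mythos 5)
Your proposal is correct and takes the approach the paper intends: the paper states this corollary without any proof, treating it as an immediate consequence of Lemma \ref{Lam1} and the corollary that precedes it (your ``Corollary 1.9'', which is the one giving $C_{\widehat u}U=UC_u$, $US_{\widehat u}^\alpha U=S_u^{\overline\alpha}$ and $C_uS_u^\alpha C_u=(S_u^\alpha)^*$), and all of your symbol computations check out, including $\overline{u}\,\widehat v\,u\phi=\widehat v\phi$ in (1), $v\,\overline{\widehat u\,v\,\widehat\phi}=\overline{\widehat u\widehat\phi}$ in (2), and the chain $C_uUS_u^\alpha=C_uS_u^{\overline\alpha}U=(S_u^{\overline\alpha})^*C_uU$ in (3). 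One small caution on part (2): literally substituting $C_uU=B^{\widehat u,u}_{\overline{\widehat u}}$ turns the expression into a product of two ATHOs, to which Lemma \ref{Lam1}(8) does not apply; the workable route you also indicate --- either writing $B^{u,v}_\phi=C_vA^{\widehat u,v}_{v\widehat\phi}U$ and using $UC_uU=C_{\widehat u}$, or equivalently inserting $C_vC_v$ on the left and applying Lemma \ref{Lam1}(3) followed by (8) --- is the one to carry out, and it does yield $A^{\widehat u,v}_{\overline{\widehat u\widehat\phi}}$.
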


In the rest of this section we give the relations concerning the products of ATHOs and ATTOs. 
\begin{proposition}\label{1} Let $u$, $v$ and $w$ be nonconstant inner functions, and  $\phi_1, \phi_2\in L^2 $, then  the following are equivalent:
\begin{enumerate}
\item[(1)]$ B^{ {v},w}_{\phi_1}B^{ {u},v}_{\phi_2}\in\mathcal{H}( {u},w)$.
\item[(2)] $A^{\widehat{v},w}_{w\widehat{\phi_1}}A^{u,\widehat{v}}_{\overline{u\phi_2}}\in\mathcal{H}( {u},w)$ 
\item[(3)]$A^{v,\widehat{w}}_{\overline{v\phi_1}}A^{\widehat{u},v}_{v\widehat{\phi_2}}\in\mathcal{H}( \widehat{u},\widehat{w})$. 
\item[(4)]$ A^{v,\widehat{w}}_{\overline{v\phi_1}}B^{ {u},v}_{\overline{u\widehat{v}\phi_2}}\in\mathcal{T}( {u,\widehat{w}})$. 
\end{enumerate}
\end{proposition}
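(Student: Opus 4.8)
The plan is to realize each of the operators appearing in (2), (3) and (4) as a two-sided conjugate, by the involutions $U$ and $C$, of the single reference product $B^{v,w}_{\phi_1}B^{u,v}_{\phi_2}$, and then to read off membership in $\mathcal{H}$ or $\mathcal{T}$ from the appropriate transfer statement in Lemma \ref{Lam2}. The whole mechanism is powered by Lemma \ref{Lam1}: read in reverse, each of its identities expresses one ATTO or ATHO as a conjugate of the other type, and because $U^2=I$ and $C_v^2=I$ the interior conjugations will always telescope, leaving only an outer pair.

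For (1) $\Leftrightarrow$ (2) I would use parts (8) and (7) of Lemma \ref{Lam1} to write
$$B^{v,w}_{\phi_1}=C_w A^{\widehat{v},w}_{w\widehat{\phi_1}}U,\qquad B^{u,v}_{\phi_2}=U A^{u,\widehat{v}}_{\overline{u\phi_2}}C_u.$$
Multiplying these, the two interior copies of $U$ cancel and
$$B^{v,w}_{\phi_1}B^{u,v}_{\phi_2}=C_w\bigl(A^{\widehat{v},w}_{w\widehat{\phi_1}}A^{u,\widehat{v}}_{\overline{u\phi_2}}\bigr)C_u,$$
so by Lemma \ref{Lam2}(2), applied to the pair $(u,w)$, the left-hand side lies in $\mathcal{H}(u,w)$ exactly when the operator in parentheses does --- that is, precisely statement (2).

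For (1) $\Leftrightarrow$ (3) the same two identities, used the other way round, give
$$A^{v,\widehat{w}}_{\overline{v\phi_1}}=U B^{v,w}_{\phi_1}C_v,\qquad A^{\widehat{u},v}_{v\widehat{\phi_2}}=C_v B^{u,v}_{\phi_2}U,$$
whose product is $U\bigl(B^{v,w}_{\phi_1}B^{u,v}_{\phi_2}\bigr)U$ after the interior $C_v$'s cancel; Lemma \ref{Lam2}(4) then identifies membership in $\mathcal{H}(\widehat{u},\widehat{w})$ with (1). For (1) $\Leftrightarrow$ (4) I would keep the factorization $A^{v,\widehat{w}}_{\overline{v\phi_1}}=U B^{v,w}_{\phi_1}C_v$ from part (7) but factor the second operator by Lemma \ref{Lam1}(3), namely $B^{u,v}_{\overline{u\widehat{v}\phi_2}}=C_v B^{u,v}_{\phi_2}C_u$; the product collapses to $U\bigl(B^{v,w}_{\phi_1}B^{u,v}_{\phi_2}\bigr)C_u$, and Lemma \ref{Lam2}(6) turns membership in $\mathcal{T}(u,\widehat{w})$ into membership of the reference product in $\mathcal{H}(u,w)$.

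None of the three steps is genuinely hard once the factorizations are in place; the main obstacle is purely clerical, namely keeping the inner-function superscripts and the symbols correct through every substitution --- this is where I would lean on the identities $\widehat{\widehat{f}}=f$ and $\widehat{fg}=\widehat{f}\widehat{g}$ --- and checking at each stage that the interior involution really does square to the identity on the correct model space (e.g.\ the inner $U$ in the first step maps $K_{\widehat{v}}$ to $K_v$ and the outer one maps $K_v$ back to $K_{\widehat{v}}$). I would also read Lemma \ref{Lam2}(6) as the statement $B\in\mathcal{H}(u,v)\Leftrightarrow UBC_u\in\mathcal{T}(u,\widehat{v})$, supplying the conjugation $C_u$ that its printed form abbreviates.
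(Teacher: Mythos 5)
Your proof is correct and follows essentially the same route as the paper: both arguments sandwich the product $B^{v,w}_{\phi_1}B^{u,v}_{\phi_2}$ between the involutions $U$ and $C$ using the identities of Lemma \ref{Lam1} (with $U^2=I$, $C_v^2=I$ collapsing the interior factors) and then invoke the corresponding transfer statements of Lemma \ref{Lam2}. Your factorization phrasing is just the paper's insertion of identity pairs read in the other direction, and your reading of Lemma \ref{Lam2}(6) as $UBC_u$ is the intended one.
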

\begin{proof} Let  $\phi_1, \phi_2\in L^2 $, then Lemma \ref{Lam1} and Lemma \ref{Lam2} imply that  \\
$(1)\Leftrightarrow(2)$
\begin{eqnarray*}
B^{ {v},w}_{\phi_1}B^{ {u},v}_{\phi_2}\in\mathcal{H}( {u},w)&\Leftrightarrow& C_wC_wB^{ {v},w}_{\phi_1}UUB^{ {u},v}_{\phi_2}C_uC_u\in\mathcal{H}( {u},w)\\
&\Leftrightarrow&  C_wA^{\widehat{v},w}_{w\widehat{\phi_1}}A^{u,\widehat{v}}_{\overline{u\phi_2}}C_u\in\mathcal{H}( {u},w)\\
&\Leftrightarrow& A^{\widehat{v},w}_{w\widehat{\phi_1}}A^{u,\widehat{v}}_{\overline{u\phi_2}}\in\mathcal{H}( {u},w).
\end{eqnarray*}
$(1)\Leftrightarrow (3)$
\begin{eqnarray*}
B^{ {v},w}_{\phi_1}B^{ {u},v}_{\phi_2}\in\mathcal{H}( {u},w)&\Leftrightarrow& UUB^{ {v},w}_{\phi_1}C_vC_vB^{ {u},v}_{\phi_2}UU\in\mathcal{H}( {u},w)\\
&\Leftrightarrow&  UA^{v,\widehat{w}}_{\overline{v\phi_1}}A^{\widehat{u},v}_{v\widehat{\phi_2}}U\in\mathcal{H}( {u},w)\\
&\Leftrightarrow&A^{v,\widehat{w}}_{\overline{v\phi_1}}A^{\widehat{u},v}_{v\widehat{\phi_2}}\in\mathcal{H}( \widehat{u},\widehat{w}).
\end{eqnarray*}
$(1)\Leftrightarrow (4)$
 \begin{eqnarray*}
B^{ {v},w}_{\phi_1}B^{ {u},v}_{\phi_2}\in\mathcal{H}( {u},w)&\Leftrightarrow& UB^{ {v},w}_{\phi_1}C_uC_uB^{ {u},v}_{\phi_2}C_u\in\mathcal{T}( {u,\widehat{w}})\\
&\Leftrightarrow&  A^{v,\widehat{w}}_{\overline{v\phi_1}}B^{ {u},v}_{\overline{u\widehat{v}\phi_2}}\in\mathcal{T}( {u,\widehat{w}}).
\end{eqnarray*}

\end{proof}

\begin{corollary} \label{3} Let $u$, $v$ and $w$ be nonconstant inner functions, and $\phi_1, \phi_2\in L^2 $, then  the following are equivalent:
\begin{enumerate}
\item[(1)]$ B^{ {v},w}_{\phi_1}B^{ {u},v}_{\phi_2}\in\mathcal{T}( {u},w)$.
\item[(2)] $A^{\widehat{v},w}_{w\widehat{\phi_1}}A^{u,\widehat{v}}_{\overline{u\phi_2}}\in\mathcal{T}( {u},w)$.
\item[(3)]$A^{v,\widehat{w}}_{\overline{v\phi_1}}A^{\widehat{u},v}_{v\widehat{\phi_2}}\in\mathcal{T}( \widehat{u},\widehat{w})$. 

\item[(4)]$B^{ {v},w}_{\overline{v\widehat{w}\phi_1}} A^{\widehat{u},v}_{v\widehat{\phi_2}}\in\mathcal{H}( {\widehat{u}},w)$. 
\end{enumerate}
\end{corollary}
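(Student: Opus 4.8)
The plan is to follow the proof of Proposition~\ref{1} essentially verbatim, changing only the \emph{target} algebra from $\mathcal{H}$ to $\mathcal{T}$. The operator under study, $B^{v,w}_{\phi_1}B^{u,v}_{\phi_2}$, is exactly the one treated there, so I would reuse the same three factorizations: insert a resolution of the identity ($UU$, $C_vC_v$, or a mixture) between the two Hankel factors and collapse each conjugated factor through the eight identities of Lemma~\ref{Lam1}. What must change is which transfer statement of Lemma~\ref{Lam2} is invoked at the outer level: since membership in $\mathcal{T}(u,w)$ (rather than $\mathcal{H}(u,w)$) is now at stake, I would call on parts (1), (3) and (5) of Lemma~\ref{Lam2} in place of the parts (2), (4) and (6) used for the Hankel target in Proposition~\ref{1}.

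For $(1)\Leftrightarrow(2)$ I would use Lemma~\ref{Lam2}(1): the product lies in $\mathcal{T}(u,w)$ if and only if $C_wB^{v,w}_{\phi_1}B^{u,v}_{\phi_2}C_u\in\mathcal{T}(u,w)$. Inserting $UU=I$ and splitting as $(C_wB^{v,w}_{\phi_1}U)(UB^{u,v}_{\phi_2}C_u)$, Lemma~\ref{Lam1}(8) and (7) identify this with $A^{\widehat v,w}_{w\widehat{\phi_1}}A^{u,\widehat v}_{\overline{u\phi_2}}$, and a second use of Lemma~\ref{Lam2}(1) strips the outer conjugations. For $(1)\Leftrightarrow(3)$ I would instead conjugate by $U$ on both sides and insert $C_vC_v=I$; Lemma~\ref{Lam2}(3) converts $\mathcal{T}(u,w)$-membership into $\mathcal{T}(\widehat u,\widehat w)$-membership of $UB^{v,w}_{\phi_1}B^{u,v}_{\phi_2}U=(UB^{v,w}_{\phi_1}C_v)(C_vB^{u,v}_{\phi_2}U)=A^{v,\widehat w}_{\overline{v\phi_1}}A^{\widehat u,v}_{v\widehat{\phi_2}}$, via Lemma~\ref{Lam1}(7) and (8).

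The one genuinely new factorization is $(1)\Leftrightarrow(4)$. Here I would apply the mixed transfer Lemma~\ref{Lam2}(5), namely that $A\in\mathcal{T}(u,w)$ if and only if $C_wAU\in\mathcal{H}(\widehat u,w)$. Inserting $C_vC_v=I$ between the two factors gives $C_wB^{v,w}_{\phi_1}B^{u,v}_{\phi_2}U=(C_wB^{v,w}_{\phi_1}C_v)(C_vB^{u,v}_{\phi_2}U)$, and Lemma~\ref{Lam1}(3) and (8) collapse this to $B^{v,w}_{\overline{v\widehat w\phi_1}}A^{\widehat u,v}_{v\widehat{\phi_2}}$, which is precisely statement (4). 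Note that this direction mirrors the $(1)\Leftrightarrow(4)$ of Proposition~\ref{1} with the roles of $U$ and $C$ interchanged, reflecting the passage from the $\mathcal{H}$-target to the $\mathcal{T}$-target.

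I expect no real obstacle, only bookkeeping. The main point to get right is that the identity inserted between the two factors must act on the middle space $K_v$ (so it is $C_vC_v$, not $C_uC_u$ or $C_wC_w$), and that the index shifts $u\mapsto v$, $v\mapsto w$, $\phi\mapsto\phi_1$ in Lemma~\ref{Lam1} be applied consistently to the left factor while the right factor keeps the original indices. Since $U^2=C_v^2=I$ and every identity of Lemma~\ref{Lam1} is exact, the argument requires no estimate whatsoever: its entire content is the algebra of the three conjugations $C_u$, $C_w$ and $U$ together with the membership equivalences of Lemma~\ref{Lam2}.
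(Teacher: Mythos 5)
Your proposal is correct and follows essentially the same route as the paper: the first two equivalences are the Proposition~\ref{1} computations with the Hankel target replaced by the Toeplitz target (Lemma~\ref{Lam2} parts (1) and (3) in place of (2) and (4)), and for $(1)\Leftrightarrow(4)$ the paper uses exactly your factorization $C_wB^{v,w}_{\phi_1}B^{u,v}_{\phi_2}U=(C_wB^{v,w}_{\phi_1}C_v)(C_vB^{u,v}_{\phi_2}U)$ together with Lemma~\ref{Lam2}(5) and Lemma~\ref{Lam1}(3),(8). The only difference is that you spell out the details the paper compresses into ``similar to the proof of Proposition~\ref{1}.''
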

\begin{proof}The proof of $(1)\Leftrightarrow (2)$ and $(1)\Leftrightarrow (3)$ are similar to the proof of proposition \ref{1}. For $(1)\Leftrightarrow (4)$, we  have 
\begin{eqnarray*}
B^{ {v},w}_{\phi_1}B^{ {u},v}_{\phi_2}\in\mathcal{T}( {u},w)&\Leftrightarrow& C_wB^{v, {w}}_{\phi_1}B^{ {u},v}_{\phi_2}U\in\mathcal{H}( {\widehat{u}},w)\\
&\Leftrightarrow& B^{ {v},w}_{\overline{v\widehat{w}\phi_1}} A^{\widehat{u},v}_{v\widehat{\phi_2}}\in\mathcal{H}( {\widehat{u}},w).
\end{eqnarray*}
\end{proof}

\section{Some algebraic properties of THOs .}\label{4}
Let $u$ be a real symmetric inner function and  $\phi\in L^2 $, we have   $$B^{ {u}}_{\phi}(B^{ {u}}_{\phi})^*=B^{ {u}}_{\phi}D(B^{ {u}}_{\phi}D)^*, \textit{ and }(B^{ {u}}_{\phi})^*B^{ {u}}_{\phi}=(DB^{ {u}}_{\phi})^*DB^{ {u}}_{\phi}.$$
It is proved in \cite{CT} that a TTO $A^{ {u}}_{\phi}$ is unitary if and only if it belongs to some class
$\mathcal{B}_u^\alpha$ for some $\alpha \in\mathbb{T}$. In this case $A^{ {u}}_{\phi} = \Phi(S_u^\alpha)$, where $|\Phi| = 1$ $\mu_\alpha$-almost everywhere. We give a similar result for THOs. 
\begin{theorem}Let $u$ be  a real symmetric. If $B^{ {u}}_{\phi} \in \mathcal{H}( {u})$, then the following are equivalent:
\begin{enumerate} 
\item[(1)] $B^{ {u}}_{\phi}$ is an isometry.
\item[(2)] $B^{ {u}}_{\phi}$ is a coisometry.
\item[(3)] $B^{ {u}}_{\phi}$ is unitary.
\item[(4)] $(B^{ {u}}_{\phi})^*B^{ {u}}_{\phi}-I\in\mathcal{T}( {u})$, 
\item[(5)] $B^{ {u}}_{\phi}(B^{ {u}}_{\phi})^*-I\in\mathcal{T}( {u})$,
\item[(6)]$B^{ {u}}_{\phi}\in D\mathcal{B}_u^\alpha$ for some $\alpha \in\mathbb{T}$, and $B^{ {u}}_{\phi} = D\Phi(S_u^\alpha)$, where $|\Phi| = 1$ $\mu_\alpha$-almost everywhere.
\end{enumerate}
\end{theorem}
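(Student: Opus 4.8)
The plan is to use the self-adjoint unitary $D=C_uU=B^{u}_{\overline u}$ supplied by Corollary \ref{remark} as a bridge that turns every statement about the truncated Hankel operator $B^{u}_{\phi}$ into a statement about an associated truncated Toeplitz operator, where Sedlock's classification (Lemma \ref{pro}) and the result of \cite{CT} are available. Since $u$ is real symmetric, Corollary \ref{remark} gives $D=D^*$, and because $C_u$ and $U$ are then commuting conjugations on $K_u$ (Corollary \ref{remark}(2) with $\widehat u=u$), also $D^2=I$; thus $D$ is a unitary involution. Setting $A:=DB^{u}_{\phi}=A^{u}_{u\phi}\in\mathcal{T}(u)$ by Corollary \ref{remark}(3) and using $D^{-1}=D^*=D$, we obtain $B^{u}_{\phi}=DA$ and $(B^{u}_{\phi})^*=A^*D$, whence
\[
(B^{u}_{\phi})^*B^{u}_{\phi}=A^*A,\qquad B^{u}_{\phi}(B^{u}_{\phi})^*=DAA^*D .
\]
As $D$ is unitary, $B^{u}_{\phi}$ is an isometry (resp.\ coisometry, resp.\ unitary) exactly when $A$ is, so the entire theorem reduces to the corresponding assertions for the single TTO $A$.

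I would first settle $(1)\Leftrightarrow(2)\Leftrightarrow(3)$. The operator $A\in\mathcal{T}(u)$ is $C_u$-symmetric by Lemma \ref{lemma}(4), so $C_uA^*AC_u=(C_uA^*C_u)(C_uAC_u)=AA^*$. Consequently $A^*A=I$ forces $AA^*=C_uIC_u=I$ and conversely; that is, for the $C_u$-symmetric operator $A$ the three notions coincide. Transporting this back through $D$ gives $(1)\Leftrightarrow(2)\Leftrightarrow(3)$. For $(3)\Leftrightarrow(6)$ I would invoke the theorem of \cite{CT}: a TTO is unitary iff it lies in some Sedlock class $\mathcal{B}_u^\alpha$ with $\alpha\in\mathbb{T}$ and equals $\Phi(S_u^\alpha)$ with $|\Phi|=1$ $\mu_\alpha$-almost everywhere. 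Applying this to $A$ and multiplying on the left by $D$ yields precisely $B^{u}_{\phi}=DA\in D\mathcal{B}_u^\alpha$ with $B^{u}_{\phi}=D\Phi(S_u^\alpha)$; the converse runs backwards, since $(6)$ gives $A=DB^{u}_{\phi}=\Phi(S_u^\alpha)$, which is unitary by Lemma \ref{pro}(2) when $|\Phi|=1$.

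The implications $(3)\Rightarrow(4)$ and $(3)\Rightarrow(5)$ are immediate, because unitarity of $A$ makes $A^*A-I=0$ and $AA^*-I=0$; moreover $(4)\Leftrightarrow(5)$ follows from $C_uA^*AC_u=AA^*$ together with the fact that $C_u\mathcal{T}(u)C_u=\mathcal{T}(u)$ (Lemma \ref{lemma}(4) and Lemma \ref{Lam1}(1)). The substantive content, and the step I expect to be the main obstacle, is the reverse passage $(4)\Rightarrow(3)$. Condition $(4)$ reads $A^*A-I\in\mathcal{T}(u)$, which—since $I\in\mathcal{T}(u)$—is equivalent to saying that the product $A^*A$ of two TTOs is again a TTO. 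Sedlock's product theorem (Lemma \ref{pro}(5)) then forces $A$ and $A^*$ into a common class $\mathcal{B}_u^\alpha$, and Lemma \ref{pro}(1) pins this to $|\alpha|=1$, so by Lemma \ref{pro}(2) the operator $A=\Phi(S_u^\alpha)$ is unitarily equivalent to the multiplication $M_\Phi$ on $L^2(\mu_\alpha)$, with $A^*A\cong M_{|\Phi|^2}$.

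The genuinely delicate point is that membership in a Sedlock class is strictly weaker than unitarity: a normal $A=\Phi(S_u^\alpha)$ with $|\Phi|\not\equiv 1$ already satisfies $A^*A\in\mathcal{B}_u^\alpha\subset\mathcal{T}(u)$. Hence the argument cannot stop at Sedlock membership; to reach $(3)$ one must extract from the exact defect relation $(B^{u}_{\phi})^*B^{u}_{\phi}-I\in\mathcal{T}(u)$ the conclusion $|\Phi|\equiv 1$ $\mu_\alpha$-almost everywhere. This upgrade from ``$A$ lies in $\mathcal{B}_u^\alpha$'' to ``$A$ is unitary'' is where the hypothesis must do its real work and where the proof demands the most care; I would isolate it as the key lemma and verify it directly in the functional-calculus model $M_{|\Phi|^2}$ on $L^2(\mu_\alpha)$, since everything else reduces cleanly to the TTO setting via $D$.
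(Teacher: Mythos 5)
Your reduction via the involution $D=C_uU=B^{u}_{\overline{u}}$ to the truncated Toeplitz operator $A=DB^{u}_{\phi}=A^{u}_{u\phi}$ is exactly the paper's route, and your treatment of $(1)\Leftrightarrow(2)\Leftrightarrow(3)$ and of $(3)\Leftrightarrow(6)$ is sound; the only real difference is cosmetic, namely that you derive isometry $\Leftrightarrow$ coisometry from the $C_u$-symmetry of $A$ directly, whereas the paper simply cites Theorem 6.3 of \cite{CT} for that chain of equivalences.

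The genuine gap is the step you yourself flag and then defer: $(4)\Rightarrow(3)$. You propose to extract $|\Phi|\equiv 1$ from the relation $A^*A-I\in\mathcal{T}(u)$ by examining the model $M_{|\Phi|^2}$ on $L^2(\mu_\alpha)$, but that relation carries no such information: whenever $A=\Phi(S_u^\alpha)\in\mathcal{B}_u^\alpha$ with $|\alpha|=1$ one has $A^*A=(|\Phi|^2)(S_u^\alpha)\in\mathcal{B}_u^\alpha\subset\mathcal{T}(u)$ whether or not $|\Phi|=1$. Concretely, $B^{u}_{\phi}=2D=B^{u}_{2\overline{u}}$ (so that $A=2I$) gives $(B^{u}_{\phi})^*B^{u}_{\phi}-I=3I=A^u_3\in\mathcal{T}(u)$, yet $2D$ is not an isometry; $A=2S_u^\alpha$ with $\alpha\in\mathbb{T}$ gives a nonscalar example. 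So the ``key lemma'' you hoped to verify is false, and conditions $(4)$ and $(5)$ as literally stated are strictly weaker than $(1)$--$(3)$. You are in good company here: the paper's own proof disposes of $(1)\Leftrightarrow(4)$ and $(1)\Leftrightarrow(5)$ with the single word ``immediate,'' which is likewise not a proof; the equivalences only become correct if $(4)$ and $(5)$ are read as $(B^{u}_{\phi})^*B^{u}_{\phi}-I=0$ and $B^{u}_{\phi}(B^{u}_{\phi})^*-I=0$. As written, neither your argument nor the paper's establishes $(4)\Rightarrow(1)$, and no argument can.
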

\begin{proof}Since $DB^{ {u}}_{\phi}=A^{u}_{u\phi}\in \mathcal{T}( {u})$, by   \cite{CT} Theorem 6.3, we have $B^{ {u}}_{\phi}$ is an isometry $ \Leftrightarrow $  $(DB^{ {u}}_{\phi})^*DB^{ {u}}_{\phi}=I$ $ \Leftrightarrow $ $DB^{ {u}}_{\phi}$ is an isometry $ \Leftrightarrow $ $DB^{ {u}}_{\phi}$ is a coisometry $ \Leftrightarrow $ $DB^{ {u}}_{\phi}(DB^{ {u}}_{\phi})^*=I$
$\Leftrightarrow $ $B^{ {u}}_{\phi}(B^{ {u}}_{\phi})^*=I$ $ \Leftrightarrow $ $B^{ {u}}_{\phi}$ is a coisometry $ \Leftrightarrow $ $B^{ {u}}_{\phi}$ is unitary.\\
The equivalences  $(1)\Leftrightarrow (4)$, and  $(1)\Leftrightarrow (5)$,  are all
immediate.\\
 $DB^{ {u}}_{\phi}\in \mathcal{T}( {u})$ is unitary $ \Leftrightarrow $ $DB^{ {u}}_{\phi}\in \mathcal{B}_u^\alpha$ for some $\alpha \in\mathbb{T}$, and $DB^{ {u}}_{\phi} = \Phi(S_u^\alpha)$, where $|\Phi| = 1$ $\mu_\alpha$-almost everywhere.
\end{proof}
\begin{corollary}Let $ {u}$ be an inner function. If $B^{ {u},\widehat{u}}_{\phi} \in \mathcal{H}( {u},\widehat{u})$, then the following are equivalent:
\begin{enumerate} 
\item[(1)] $B^{ {u},\widehat{u}}_{\phi}$ is an isometry.
\item[(2)] $B^{ {u},\widehat{u}}_{\phi}$ is a coisometry.
\item[(3)] $(B^{ {u},\widehat{u}}_{\phi})^*B^{ {u},\widehat{u}}_{\phi}-I_{K_u}\in\mathcal{T}( {u})$, 
\item[(4)] $B^{ {u},\widehat{u}}_{\phi}(B^{ {u},\widehat{u}}_{\phi})^*-I_{K_{\widehat{u}}}\in\mathcal{T}( \widehat{u})$,
\item[(5)]$B^{ {u},\widehat{u}}_{\phi}\in UC_u\mathcal{B}_u^\alpha$ for some $\alpha \in\mathbb{T}$, and $B^{ {u}}_{\phi} = UC_u\Phi(S_u^\alpha)$, where $|\Phi| = 1$ $\mu_\alpha$-almost everywhere.
\end{enumerate}
\end{corollary}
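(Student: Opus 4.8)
The plan is to follow the proof of the preceding Theorem, but with the self-adjoint unitary $D$ (available only when $u$ is real symmetric) replaced by the \emph{unitary} $E:=C_uU\colon K_{\widehat u}\to K_u$, which now links two genuinely different model spaces. First I would note that $E$ is unitary: being the composition of the two conjugations $U$ and $C_u$ it is linear and isometric, and its inverse is $E^{*}=E^{-1}=UC_u\colon K_u\to K_{\widehat u}$, a fact I would justify directly from the involutivity $U^{2}=C_u^{2}=I$ rather than by manipulating $E$ as a product of ordinary linear operators. Concretely, the earlier identities give $E=C_uU=B^{\widehat u,u}_{\overline{\widehat u}}$ and $E^{*}=UC_u=B^{u,\widehat u}_{\overline{u}}$. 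The decisive reduction is then Corollary~\ref{remark}(1) applied with $v=\widehat u$, so that $\widehat v=u$:
\[
E\,B^{u,\widehat u}_{\phi}=C_uU\,B^{u,\widehat u}_{\phi}=A^{u,u}_{u\phi}=A^{u}_{u\phi}\in\mathcal T(u).
\]
Setting $T:=A^{u}_{u\phi}$, we obtain $B^{u,\widehat u}_{\phi}=E^{*}T=UC_u\,T$, where $T$ is a bounded truncated Toeplitz operator on $K_u$ (bounded because $E$ is unitary and $B^{u,\widehat u}_{\phi}$ is assumed bounded).

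Next I would transport each condition across $E$ so that all five become assertions about the single operator $T\in\mathcal T(u)$. Using $B^{u,\widehat u}_{\phi}=UC_uT$, together with $U^{2}=C_u^{2}=I$ and the $C_u$-symmetry of $T$ from Lemma~\ref{lemma}(4), a short computation gives $\big(B^{u,\widehat u}_{\phi}\big)^{*}B^{u,\widehat u}_{\phi}=T^{*}T$ and $B^{u,\widehat u}_{\phi}\big(B^{u,\widehat u}_{\phi}\big)^{*}=U\,T^{*}T\,U$. Consequently $B^{u,\widehat u}_{\phi}$ is an isometry iff $T^{*}T=I_{K_u}$, and $B^{u,\widehat u}_{\phi}$ is a coisometry iff $U\,T^{*}T\,U=I_{K_{\widehat u}}$, which by $U^{2}=I$ is again equivalent to $T^{*}T=I_{K_u}$; this already pairs $(1)\Leftrightarrow(2)$. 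For $(3)$ one has $\big(B^{u,\widehat u}_{\phi}\big)^{*}B^{u,\widehat u}_{\phi}-I_{K_u}=T^{*}T-I_{K_u}$ directly, while for $(4)$ the identity $B^{u,\widehat u}_{\phi}\big(B^{u,\widehat u}_{\phi}\big)^{*}-I_{K_{\widehat u}}=U\big(T^{*}T-I_{K_u}\big)U$ combined with Lemma~\ref{Lam2}(3) (so that $U X U\in\mathcal T(\widehat u)$ iff $X\in\mathcal T(u)$) shows $(4)$ is equivalent to $T^{*}T-I_{K_u}\in\mathcal T(u)$. Finally, since $UC_u$ is a bijection, $B^{u,\widehat u}_{\phi}\in UC_u\,\mathcal B_u^{\alpha}$ iff $T\in\mathcal B_u^{\alpha}$, and $B^{u,\widehat u}_{\phi}=UC_u\,\Phi(S_u^{\alpha})$ iff $T=\Phi(S_u^{\alpha})$.

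To close, with every condition now expressed through $T=A^{u}_{u\phi}\in\mathcal T(u)$, I would invoke \cite{CT}, Theorem~6.3: for a truncated Toeplitz operator the statements ``$T$ is an isometry'', ``$T$ is a coisometry'', ``$T$ is unitary'', ``$T^{*}T-I\in\mathcal T(u)$'', ``$TT^{*}-I\in\mathcal T(u)$'', and ``$T\in\mathcal B_u^{\alpha}$ for some $\alpha\in\mathbb T$ with $T=\Phi(S_u^{\alpha})$, $|\Phi|=1$ $\mu_\alpha$-a.e.'' are mutually equivalent. Chaining these with the transport relations above gives the equivalence of $(1)$--$(5)$.

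I expect the main difficulty to be organizational rather than computational. Unlike the preceding Theorem, where $u=\widehat u$ and $D=C_uU$ is a self-adjoint unitary on the single space $K_u$, here $E=C_uU$ is a unitary between the distinct spaces $K_u$ and $K_{\widehat u}$; one must therefore keep careful track of which identity operator ($I_{K_u}$ or $I_{K_{\widehat u}}$) occurs in each condition and apply the correct transfer result ($\mathcal T(u)$ versus $\mathcal T(\widehat u)$). In particular condition $(4)$, which lives on $K_{\widehat u}$, cannot be handled by a bare unitary conjugation and requires the extra passage through Lemma~\ref{Lam2}(3). A second point demanding care is that $U$ and $C_u$ are conjugate-linear, so the computations of $E^{*}$, of $\big(B^{u,\widehat u}_{\phi}\big)^{*}B^{u,\widehat u}_{\phi}$, and of $B^{u,\widehat u}_{\phi}\big(B^{u,\widehat u}_{\phi}\big)^{*}$ must be justified from the involutions $U^{2}=C_u^{2}=I$ rather than by treating $E$ as an ordinary operator product.
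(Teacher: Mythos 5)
Your proof is correct and follows essentially the same route as the paper: reduce $B^{u,\widehat u}_{\phi}$ to a truncated Toeplitz operator on $K_u$ via the conjugation identities and then invoke Theorem~6.3 of \cite{CT}. The only cosmetic difference is that you use the one-sided reduction $C_uU B^{u,\widehat u}_{\phi}=A^{u}_{u\phi}$ throughout, whereas the paper starts from the two-sided identity $UB^{u,\widehat u}_{\phi}C_u=A^{u}_{\overline{u\phi}}$ and then says the rest is analogous to the preceding theorem (which, for condition (5), implicitly requires exactly the left-multiplied form you work with).
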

\begin{proof}Since $UB^{ {u},\widehat{u}}_{\phi}C_u=A^{u}_{\overline{u\phi}}\in \mathcal{T}( {u})$, we have $(B^{ {u},\widehat{u}}_{\phi})^*B^{ {u},\widehat{u}}_{\phi}=I_{K_u}\Leftrightarrow (A^{u}_{\overline{u\phi}})^*A^{u}_{\overline{u\phi}}=I_{K_u} \Leftrightarrow A^{u}_{\overline{u\phi}}(A^{u}_{\overline{u\phi}})^*=I_{K_{u}}\Leftrightarrow
 B^{ {u},\widehat{u}}_{\phi}(B^{ {u},\widehat{u}}_{\phi})^*=I_{K_{\widehat{u}}}$ , rest of the proof is analogous to the proof of the preceding theorem. 
\end{proof}
The following result  is an analogue of Theorem 5.4. in \cite{sed}.
\begin{theorem}Let $u$ be a real symmetric and  $B^{ {u}}_{\phi} \in \mathcal{H}( {u})$ be invertible, then the following are equivalent:
\begin{enumerate} 
\item[(1)] $(B^{ {u}}_{\phi})^{-1}\in \mathcal{H}( {u})$,
\item[(2)]  $(DB^{ {u}}_{\phi})^{-1}\in\mathcal{T}( {u})$  
\item[(3)] $B^{ {u}}_{\phi}\in D\mathcal{B}_u^\alpha$ and $(B^{ {u}}_{\phi})^{-1}\in D\mathcal{B}_u^{\frac{1}{\alpha}}$ for some $\alpha\in\mathbb{C}\setminus\{0\}$.  
\item[(4)]  There are functions $\phi_1, \phi_2\in K_u$ and $c_1,c_2\in\mathbb{C}$
 such that $$B^{ {u}}_{\phi}=B^{ {u}}_{\overline{u}\phi_1 + \alpha \overline{uS_ {u} C_ {u}\varphi_1} +\overline{u}c_1} \textit{ and }(B^{ {u}}_{\phi})^{-1}=B^{ {u}}_{\overline{u}\phi_2 + \frac{1}{\alpha} \overline{uS_ {u} C_ {u}\varphi_2} +\overline{u}c_2},$$ for some $\alpha\in\mathbb{C}\setminus\{0\}$.
\end{enumerate}
\end{theorem}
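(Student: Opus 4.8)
The plan is to transport the whole problem into the world of truncated Toeplitz operators via the self-adjoint unitary involution $D = C_uU = B^u_{\overline u}$ of Corollary \ref{remark}(3), which satisfies $D = D^*$ and $D^2 = I$. Since $u$ is real symmetric I may use the bridge identities
$$ DB^u_g = A^u_{ug}, \qquad DA^u_h = B^u_{\overline u h}, \qquad B^u_g D = A^u_{\overline{u\widehat g}}. $$
Because $D^2 = I$, left and right multiplication by $D$ are involutive bijections, so these identities upgrade to $\mathcal{H}(u) = D\,\mathcal{T}(u) = \mathcal{T}(u)\,D$; in particular $X \in \mathcal{H}(u) \Leftrightarrow DX \in \mathcal{T}(u) \Leftrightarrow XD \in \mathcal{T}(u)$ for every bounded $X$ on $K_u$. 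Throughout I set
$$ A := DB^u_\phi = A^u_{u\phi} \in \mathcal{T}(u), $$
so that $B^u_\phi = DA$ and $(B^u_\phi)^{-1} = A^{-1}D$; as $D$ is invertible, $B^u_\phi$ is invertible exactly when $A$ is.

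For $(1)\Leftrightarrow(2)$ I would invoke the right-multiplication bijection directly. Since $(DB^u_\phi)^{-1} = (B^u_\phi)^{-1}D$, we get
$$ (B^u_\phi)^{-1} \in \mathcal{H}(u) \iff (B^u_\phi)^{-1}D \in \mathcal{T}(u) \iff (DB^u_\phi)^{-1} \in \mathcal{T}(u), $$
which is precisely the equivalence of (1) and (2).

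The heart of the matter is $(2)\Leftrightarrow(3)$, for which the crucial auxiliary fact is the conjugation rule $D\,\mathcal{B}_u^\alpha\, D = \mathcal{B}_u^{1/\alpha}$ for $\alpha \in \widehat{\mathbb{C}}$. I would derive it from $DS_u^\alpha D = (S_u^{\overline\alpha})^*$ (Corollary \ref{remark}(3)): conjugation by $D$ carries the commutant $\{S_u^\alpha\}'$ onto $\{(S_u^{\overline\alpha})^*\}'$, and by Lemma \ref{pro}(3)--(4) the latter is exactly $\mathcal{B}_u^{1/\alpha}$, checked in the cases $|\alpha|<1$, $|\alpha|=1$, $|\alpha|>1$ and $\alpha\in\{0,\infty\}$. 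Now (2) says $A^{-1}=(DB^u_\phi)^{-1}\in\mathcal{T}(u)$, so the truncated-Toeplitz version, Theorem 5.4 of \cite{sed}, applies to $A$: this holds iff $A \in \mathcal{B}_u^\alpha$ for some $\alpha \in \mathbb{C}\setminus\{0\}$, in which case $A^{-1} \in \mathcal{B}_u^\alpha$ too, the class $\mathcal{B}_u^\alpha$ being a commutant and hence closed under inversion. Translating through $D$ gives $A \in \mathcal{B}_u^\alpha \Leftrightarrow B^u_\phi \in D\mathcal{B}_u^\alpha$, while $D(B^u_\phi)^{-1} = DA^{-1}D$ together with the conjugation rule yields
$$ A^{-1} \in \mathcal{B}_u^\alpha \iff DA^{-1}D \in \mathcal{B}_u^{1/\alpha} \iff (B^u_\phi)^{-1} \in D\mathcal{B}_u^{1/\alpha}, $$
which is exactly (3).

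Finally $(3)\Leftrightarrow(4)$ is pure symbol bookkeeping: writing a generic element of $\mathcal{B}_u^\alpha$ as $A^u_{\psi + \alpha\overline{S_uC_u\psi}+c}$ and applying $DA^u_h = B^u_{\overline u h}$ converts $B^u_\phi = DA$ into $B^u_\phi = B^u_{\overline u\phi_1 + \alpha\overline{uS_uC_u\phi_1}+\overline u c_1}$, and similarly $(B^u_\phi)^{-1} = D(DA^{-1}D)$ with $DA^{-1}D \in \mathcal{B}_u^{1/\alpha}$ into the stated form with parameter $1/\alpha$. The main obstacle I anticipate is precisely this parameter: one would naively expect the inverse to land in the same class, but since $D$ sits on the left of $B^u_\phi$ and on the right of $(B^u_\phi)^{-1}$, bringing it to a common side forces a conjugation by $D$ and hence the replacement $\alpha \mapsto 1/\alpha$. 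Getting this reciprocal right, and keeping the left/right multiplications by $D$ consistent throughout, is the delicate step; everything else reduces to the cited structural results for TTOs.
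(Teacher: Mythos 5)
Your proposal is correct and follows essentially the same route as the paper: transport everything to $\mathcal{T}(u)$ via the involution $D$, apply Sedlock's inversion theorem to $DB^u_\phi=A^u_{u\phi}$, use $\mathcal{B}_u^\alpha D=D\mathcal{B}_u^{1/\alpha}$ (equivalently your conjugation rule $D\mathcal{B}_u^\alpha D=\mathcal{B}_u^{1/\alpha}$) to account for the reciprocal parameter, and finish with the same symbol bookkeeping for (4). Your treatment of $(1)\Leftrightarrow(2)$ via the bijection $X\in\mathcal{H}(u)\Leftrightarrow XD\in\mathcal{T}(u)$ is in fact stated more cleanly than the paper's chain of equivalences, but it is not a different argument.
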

\begin{proof}Let $B^{ {u}}_{\phi} \in \mathcal{H}( {u})$ be invertible, by \cite{sed}, we have $(B^{ {u}}_{\phi})^{-1}\in \mathcal{H}( {u})$ $ \Leftrightarrow $ $B^{ {u}}_{\phi}(B^{ {u}}_{\phi})^{-1}=(B^{ {u}}_{\phi})^{-1}B^{ {u}}_{\phi}=I$   $ \Leftrightarrow $ $(DB^{ {u}}_{\phi})^{-1}\in \mathcal{T}( {u})$  $ \Leftrightarrow $ there exists $\alpha\in\mathbb{C}\setminus\{0\}, DB^{ {u}}_{\phi}, (DB^{ {u}}_{\phi})^{-1}\in \mathcal{B}_u^\alpha$  $ \Leftrightarrow $   $B^{ {u}}_{\phi}\in D\mathcal{B}_u^\alpha$ and $(B^{ {u}}_{\phi})^{-1}\in \mathcal{B}_u^\alpha D=D\mathcal{B}_u^{\frac{1}{\alpha}} $ for some $\alpha\in\mathbb{C}\setminus\{0\}$.\\
Let  $DB^{ {u}}_{\phi}\in \mathcal{B}_u^\alpha$ and $D(B^{ {u}}_{\phi})^{-1}\in \mathcal{B}_u^{\frac{1}{\alpha}}$ for some $\alpha\in\mathbb{C}\setminus\{0\}$. Since  $ DB_\phi^{u} =A^{u}_{u\phi}$ if and only if there is function $\phi_1\in K_u$ and $c_1\in\mathbb{C}$ such that

\begin{eqnarray*}
 DB_\phi^{u} =A^{u}_{u\phi}=A^{ {u}}_{\phi_1 + \alpha \overline{S_ {u} C_ {u}\varphi_1} +c_1}&\Leftrightarrow& u\phi=\phi_1 + \alpha \overline{S_ {u} C_ {u}\varphi_1} +c_1+uH^2+\overline{uH^2},	\\
&\Leftrightarrow &  \phi=\overline{u}\phi_1 + \alpha \overline{uS_ {u} C_ {u}\varphi_1 }+\overline{u}c_1+H^2+\overline{u^2H^2},	\\
&\Leftrightarrow & 	 \varphi\overset{\mathrm{B}}{\equiv} \overline{u}\phi_1 + \alpha \overline{uS_ {u} C_ {u}\varphi_1 }+\overline{u}c_1,\\
&\Leftrightarrow & B_\phi^{u} =B^{ {u}}_{\overline{u}\phi_1 + \alpha \overline{uS_ {u} C_ {u}\varphi_1} +\overline{u}c_1},
\end{eqnarray*}
for some $\alpha\in\mathbb{C}\setminus\{0\}$.
For the symbol of $(B^{ {u}}_{\phi})^{-1}$, proof is similar.
\end{proof}
The following Theorem  is an analogue of Corollary 3.2 in \cite{CT}.
\begin{theorem} Let $u$ be  a real symmetric and $B^{ {u}}_{\phi_1},B^{ {u}}_{\phi_2} \in \mathcal{H}( {u})$, then the following are equivalent:
\begin{enumerate} 
\item[(1)] $B^{ {u}}_{\phi_1}B^{ {u}}_{\phi_2}=0$,  
\item[(2)] there is $\alpha\in\widehat{\mathbb{C}}$ such that $B^{ {u}}_{\phi_1}\in D\mathcal{B}_u^\alpha $ and $B^{ {u}}_{\phi_2}\in \mathcal{B}_u^\alpha D$  moreover: 
\begin{enumerate}
\item If $|\alpha| = 1$, then $B^{ {u}}_{\phi_1}=D\Phi(S_u^\alpha),B^{ {u}}_{\phi_2}=\Psi(S_u^\alpha)D$, with $\Phi,\Psi\in L^\infty(\mu_\alpha)$ and $\Phi\Psi=0, \mu_\alpha$-almost everywhere.
\item If $|\alpha| < 1$, then $B^{ {u}}_{\phi_1}=D\Phi(S_u^\alpha),B^{ {u}}_{\phi_2}=\Psi(S_u^\alpha)D$, with $\Phi,\Psi\in H^\infty$ and the inner function  $u_\alpha$ divides $\Phi\Psi$. 
\item If $|\alpha| > 1$, then  $B^{ {u}}_{\phi_1}=D\Phi(S_u^{1/\overline{\alpha}})^*,B^{ {u}}_{\phi_2}=\Psi(S_u^{1/\overline{\alpha}})^*D$, with $\Phi,\Psi\in H^\infty$  and the inner function  $u_{1/\overline{\alpha}}=\frac{1-\overline{\alpha}u}{u-\alpha}$ divides $\Phi\Psi$.
\end{enumerate}	\end{enumerate}
\end{theorem}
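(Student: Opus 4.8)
The plan is to transfer the relation $B^{u}_{\phi_1}B^{u}_{\phi_2}=0$ into a statement about a product of two ordinary truncated Toeplitz operators, apply the known description Corollary 3.2 of \cite{CT} of when such a product vanishes, and then pull the conclusion back through the involution $D$. The whole argument rests on the transfer identities from Corollary \ref{remark}(3), so I would begin by pinning those down together with the algebraic nature of $D$.

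Since $u$ is real symmetric, $D=C_uU=B^{u}_{\overline u}$ satisfies $D=D^{*}$ by Corollary \ref{remark}(3), and $D$ is unitary on $K_u$, being the composite of the two conjugations $C_u$ and $U$ (here one uses that for real symmetric $u$ the flip $U$ maps $K_u=K_{\widehat u}$ onto itself). A self-adjoint unitary has $D^{2}=DD^{*}=I$, so $D$ is an involution with $D^{-1}=D$. From Corollary \ref{remark}(3) I then have the two crucial transfer identities $D\,B^{u}_{\phi_1}=A^{u}_{u\phi_1}$ and $B^{u}_{\phi_2}\,D=A^{u}_{\overline{u\widehat{\phi_2}}}$, both members of $\mathcal{T}(u)$.

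Conjugating the product by $D$ and using $D^{2}=I$ gives
$$D\,B^{u}_{\phi_1}B^{u}_{\phi_2}\,D=(D\,B^{u}_{\phi_1})(B^{u}_{\phi_2}\,D)=A^{u}_{u\phi_1}\,A^{u}_{\overline{u\widehat{\phi_2}}}.$$
Because $D$ is invertible, $B^{u}_{\phi_1}B^{u}_{\phi_2}=0$ if and only if $A^{u}_{u\phi_1}A^{u}_{\overline{u\widehat{\phi_2}}}=0$. Now Corollary 3.2 of \cite{CT} describes precisely when a product of two TTOs is zero: there is $\alpha\in\widehat{\mathbb{C}}$ with both $A^{u}_{u\phi_1},A^{u}_{\overline{u\widehat{\phi_2}}}\in\mathcal{B}_u^{\alpha}$, and the representatives split into the three cases $|\alpha|=1$, $|\alpha|<1$, $|\alpha|>1$ with the side conditions $\Phi\Psi=0$ $\mu_\alpha$-a.e., $u_\alpha\mid\Phi\Psi$, and $u_{1/\overline{\alpha}}\mid\Phi\Psi$ respectively. (The degenerate possibility that one factor is a scalar multiple of the identity is harmless, since scalar multiples of $I$ lie in every class $\mathcal{B}_u^{\alpha}$.)

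Finally I would undo the conjugation. Multiplying $A^{u}_{u\phi_1}=D\,B^{u}_{\phi_1}\in\mathcal{B}_u^{\alpha}$ on the left by $D$ yields $B^{u}_{\phi_1}\in D\mathcal{B}_u^{\alpha}$, and multiplying $A^{u}_{\overline{u\widehat{\phi_2}}}=B^{u}_{\phi_2}\,D\in\mathcal{B}_u^{\alpha}$ on the right by $D$ yields $B^{u}_{\phi_2}\in\mathcal{B}_u^{\alpha}D$. Writing out the representatives from Lemma \ref{pro}(2)--(4) — namely $A^{u}_{u\phi_1}=\Phi(S_u^\alpha)$ and $A^{u}_{\overline{u\widehat{\phi_2}}}=\Psi(S_u^\alpha)$ when $|\alpha|\le 1$, and the $(S_u^{1/\overline{\alpha}})^{*}$ versions when $|\alpha|>1$ — and attaching $D$ on the correct side gives $B^{u}_{\phi_1}=D\Phi(S_u^\alpha)$ and $B^{u}_{\phi_2}=\Psi(S_u^\alpha)D$ (and their $|\alpha|>1$ analogues), while the conditions on $\Phi\Psi$ transfer verbatim. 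The one point needing care — and the source of the asymmetry $D\mathcal{B}_u^{\alpha}$ versus $\mathcal{B}_u^{\alpha}D$ — is to keep the two transfer identities on their correct sides, i.e. $D$ attached on the \emph{left} of the first factor and on the \emph{right} of the second, exactly as dictated by the factorization $(D\,B^{u}_{\phi_1})(B^{u}_{\phi_2}\,D)$.
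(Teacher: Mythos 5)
Your proposal is correct and follows essentially the same route as the paper: conjugate the product by the involution $D$, identify $(DB^{u}_{\phi_1})(B^{u}_{\phi_2}D)$ as a product of truncated Toeplitz operators via Corollary \ref{remark}(3), and invoke Corollary 3.2 of \cite{CT}. The paper states this in one line; your write-up merely makes explicit the facts ($D=D^{*}$, $D^{2}=I$, the side on which $D$ is attached) that the paper leaves implicit.
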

\begin{proof}The proof follows immediately from  Corollary 3.2 of \cite{CT},  for $B^{ {u}}_{\phi_1}B^{ {u}}_{\phi_2}=0\Leftrightarrow DB^{ {u}}_{\phi_1}B^{ {u}}_{\phi_2}D=0$.
\end{proof}
\begin{corollary}Let $ {u}$ be an inner function and  $B^{ {u},\widehat{u}}_{\phi_1} \in \mathcal{H}( {u},\widehat{u})$ and $B^{\widehat{u}, {u}}_{\phi_2} \in \mathcal{H}( \widehat{u},{u})$, then the following are equivalent:
\begin{enumerate} 
\item[(1)] $B^{\widehat{u}, {u}}_{\phi_1}B^{ {u},\widehat{u}}_{\phi_2}=0$,  
\item[(2)] There is $\alpha\in\widehat{\mathbb{C}}$ such that $B^{\widehat{u}, {u}}_{\phi_1}\in C_u\mathcal{B}_u^\alpha U$ and $B^{ {u},\widehat{u}}_{\phi_2}\in U\mathcal{B}_u^\alpha C_u$  moreover: 
\begin{enumerate}
\item If $|\alpha| = 1$, then $B^{\widehat{u}, {u}}_{\phi_1}=C_u\Phi(S_u^\alpha)U,B^{ {u},\widehat{u}}_{\phi_2}=U\Psi(S_u^\alpha)C_u$, with $\Phi,\Psi\in L^\infty(\mu_\alpha)$ and $\Phi\Psi=0, \mu_\alpha$-almost everywhere.
\item If $|\alpha| < 1$, then $B^{\widehat{u}, {u}}_{\phi_1}=C_u\Phi(S_u^\alpha)U,B^{ {u},\widehat{u}}_{\phi_2}=U\Psi(S_u^\alpha)C_u$, with $\Phi,\Psi\in H^\infty$ and the inner function  $u_\alpha$ divides $\Phi\Psi$. 
\item If $|\alpha| > 1$, then  $B^{\widehat{u}, {u}}_{\phi_1}=C_u\Phi(S_u^{1/\overline{\alpha}})^*U,B^{ {u},\widehat{u}}_{\phi_2}=U\Psi(S_u^{1/\overline{\alpha}})^*C_u$, with $\Phi,\Psi\in H^\infty$  and the inner function  $u_{1/\overline{\alpha}}=\frac{1-\overline{\alpha}u}{u-\alpha}$ divides $\Phi\Psi$.
\end{enumerate}	\end{enumerate}
\end{corollary}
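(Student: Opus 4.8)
The plan is to mimic the reduction used in the preceding theorem, replacing the single self-adjoint symmetry $D$ by the pair of isometric involutions $C_u$ and $U$, and thereby transport the identity $B^{\widehat u,u}_{\phi_1}B^{u,\widehat u}_{\phi_2}=0$ into a statement about a product of two \emph{ordinary} truncated Toeplitz operators on $K_u$, to which Corollary 3.2 of \cite{CT} applies verbatim. The two transfer identities that drive everything are special cases of Lemma \ref{Lam1}: applying part (8) with the roles $(\widehat u,u)$ gives $C_u B^{\widehat u,u}_{\phi_1}U = A^{u}_{u\widehat{\phi_1}}$, and applying part (7) with $v=\widehat u$ gives $U B^{u,\widehat u}_{\phi_2}C_u = A^{u}_{\overline{u\phi_2}}$; both right-hand sides lie in $\mathcal{T}(u)$ because the $B$'s are assumed bounded and $C_u,U$ are isometric (cf.\ Lemma \ref{Lam2}).

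Since $C_u$ is an invertible conjugate-linear involution on $K_u$, the vanishing $B^{\widehat u,u}_{\phi_1}B^{u,\widehat u}_{\phi_2}=0$ is equivalent to $C_u B^{\widehat u,u}_{\phi_1}B^{u,\widehat u}_{\phi_2}C_u=0$. Inserting the identity $U^2=I$ between the two factors---legitimate because $U$ interchanges $K_u$ and $K_{\widehat u}$ with $U^2=I$---and regrouping gives
\[
C_u B^{\widehat u,u}_{\phi_1}B^{u,\widehat u}_{\phi_2}C_u=\bigl(C_u B^{\widehat u,u}_{\phi_1}U\bigr)\bigl(U B^{u,\widehat u}_{\phi_2}C_u\bigr)=A^{u}_{u\widehat{\phi_1}}\,A^{u}_{\overline{u\phi_2}}.
\]
Thus (1) is equivalent to the vanishing of the product $A^{u}_{u\widehat{\phi_1}}A^{u}_{\overline{u\phi_2}}$ of two bounded truncated Toeplitz operators on $K_u$.

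Now I would invoke Corollary 3.2 of \cite{CT} (the same result used in the preceding theorem), which states that such a product vanishes precisely when there is $\alpha\in\widehat{\mathbb{C}}$ with $A^{u}_{u\widehat{\phi_1}},A^{u}_{\overline{u\phi_2}}\in\mathcal{B}_u^\alpha$, together with the three case distinctions recorded in the statement (for $|\alpha|=1$ the condition $\Phi\Psi=0$ $\mu_\alpha$-a.e., and for $|\alpha|\neq1$ the divisibility of $\Phi\Psi$ by $u_\alpha$, resp.\ by $u_{1/\overline{\alpha}}$). It then remains to translate the conclusions back through $C_u$ and $U$. Using $C_u^2=U^2=I$, the membership $A^{u}_{u\widehat{\phi_1}}=C_uB^{\widehat u,u}_{\phi_1}U\in\mathcal{B}_u^\alpha$ is equivalent to $B^{\widehat u,u}_{\phi_1}=C_u A^{u}_{u\widehat{\phi_1}}U\in C_u\mathcal{B}_u^\alpha U$, and likewise $A^{u}_{\overline{u\phi_2}}=UB^{u,\widehat u}_{\phi_2}C_u\in\mathcal{B}_u^\alpha$ is equivalent to $B^{u,\widehat u}_{\phi_2}\in U\mathcal{B}_u^\alpha C_u$. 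Substituting the explicit representatives $\Phi(S_u^\alpha),\Psi(S_u^\alpha)$ (resp.\ $\Phi(S_u^{1/\overline{\alpha}})^*,\Psi(S_u^{1/\overline{\alpha}})^*$) furnished by Corollary 3.2 of \cite{CT} into $B^{\widehat u,u}_{\phi_1}=C_u(\,\cdot\,)U$ and $B^{u,\widehat u}_{\phi_2}=U(\,\cdot\,)C_u$ produces exactly the formulas in (2a)--(2c).

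The argument carries no genuine analytic difficulty: its entire content is the transfer result Corollary 3.2 of \cite{CT}, which I treat as a black box. The one place demanding care is the bookkeeping of domains and codomains, since $C_u$ acts on $K_u$ while $U$ swaps $K_u$ and $K_{\widehat u}$; at each step one must verify that the composite operators are defined on the correct model space, so that in particular $C_uB^{\widehat u,u}_{\phi_1}U$ and $UB^{u,\widehat u}_{\phi_2}C_u$ genuinely land in $\mathcal{T}(u)$. I would also flag the minor index clash between the stated hypotheses and item (1): the proof reads the product exactly as written in (1), namely $B^{\widehat u,u}_{\phi_1}\colon K_{\widehat u}\to K_u$ composed after $B^{u,\widehat u}_{\phi_2}\colon K_u\to K_{\widehat u}$, which is the only composition that is well defined.
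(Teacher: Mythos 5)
Your proposal is correct and follows essentially the same route as the paper: the paper's one-line proof invokes Corollary \ref{3} (whose own proof is exactly your sandwich $C_u(\cdot)C_u$ with $U^2=I$ inserted) to reduce (1) to $A^{u}_{u\widehat{\phi_1}}A^{u}_{\overline{u\phi_2}}=0$, and then applies Corollary 3.2 of \cite{CT} and translates back through $C_u$ and $U$, just as you do. Your observation about the index clash between the hypotheses and item (1) is a genuine typo in the statement, and your reading of the product as written in (1) is the only one that makes the compositions well defined.
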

\begin{proof} By Corollary \ref{3},  $B^{\widehat{u}, {u}}_{\phi_1}B^{ {u},\widehat{u}}_{\phi_2}=0\Leftrightarrow A^{u}_{u\widehat{\phi_1}}A^{u}_{\overline{u\phi_2}}=0$, if and only if by Corollary 3.2 of \cite{CT},  finishing the proof. 

\end{proof}
\section{Products  of truncated Hankel operators.}\label{5}
If $u$ is any real symmetric inner function then the  following result provides a necessary and sufficient condition for the product of two THOs to be a TTO. The result is  an analogue  of Theorem 2.7 of  \cite{KK}.
\begin{theorem} \label{th2} Let $u$ be  a real symmetric and  $B_1,B_2 \in \mathcal{H}( {u})$, then  $ B_1B_2 \in\mathcal{T}( {u})$ if and only if one of two cases
holds:
\begin{enumerate}
\item $B_1=cD$, or $B_2=cD,$ for some $c\in\mathbb{C}$.
\item $\exists \alpha\in \widehat{\mathbb{C}}$, $B_1(S_u^{\overline{\alpha}})^*= S_u^\alpha B_1$, and $B_2S_u^\alpha=(S_u^{\overline{\alpha}})^* B_2$. 
\end{enumerate}In
which case their product is in $\mathcal{B}_u^\alpha$.
\end{theorem}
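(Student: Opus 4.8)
The plan is to convert the problem about truncated Hankel operators into one about truncated Toeplitz operators by means of the self-adjoint involution $D = C_u U = B^u_{\overline u}$, and then to invoke Sedlock's classification in Lemma \ref{pro}(5). Since $u$ is real symmetric, $C_u$ and $U$ commute, so $D^* = D$ and $D^2 = I$; thus $D$ is a unitary with $D^{-1} = D$. Writing $B_1 = B^u_{\phi_1}$ and $B_2 = B^u_{\phi_2}$, I would insert $D^2 = I$ between the factors to obtain
\begin{equation*}
B_1 B_2 = B_1 D^2 B_2 = (B_1 D)(D B_2).
\end{equation*}
By Corollary \ref{remark}(3), $B_1 D = A^u_{\overline{u\widehat{\phi_1}}} \in \mathcal{T}(u)$ and $D B_2 = A^u_{u\phi_2} \in \mathcal{T}(u)$, so $B_1 B_2$ is rewritten as a product of two genuine TTOs. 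Consequently $B_1 B_2 \in \mathcal{T}(u)$ if and only if $(B_1 D)(D B_2) \in \mathcal{T}(u)$, which is exactly the situation governed by Lemma \ref{pro}(5).

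Applying Lemma \ref{pro}(5) to the TTOs $B_1 D$ and $D B_2$, the product lies in $\mathcal{T}(u)$ precisely when either one of the two factors is a scalar multiple of the identity, or both belong to a common Sedlock class $\mathcal{B}_u^\alpha$. The first alternative translates directly into case (1): since $D^{-1} = D$, the equation $B_1 D = cI$ is equivalent to $B_1 = cD$, and likewise $D B_2 = cI$ is equivalent to $B_2 = cD$. Moreover, in the second alternative Lemma \ref{pro}(5) also guarantees that the product $(B_1 D)(D B_2) = B_1 B_2$ lies in $\mathcal{B}_u^\alpha$, which gives the final assertion of the theorem.

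It remains to translate ``$B_1 D \in \mathcal{B}_u^\alpha$ and $D B_2 \in \mathcal{B}_u^\alpha$'' into the intertwining relations of case (2). Here I would use the identity $D S_u^\alpha = (S_u^{\overline\alpha})^* D$ from Corollary \ref{remark}(3), together with its companion $S_u^\alpha D = D (S_u^{\overline\alpha})^*$ obtained by multiplying on the left and then on the right by $D$. For $|\alpha| \le 1$, Lemma \ref{pro} identifies $\mathcal{B}_u^\alpha$ with the commutant $\{S_u^\alpha\}'$; expanding $(B_1 D) S_u^\alpha = S_u^\alpha (B_1 D)$ and cancelling the outer factor $D$ yields $B_1 (S_u^{\overline\alpha})^* = S_u^\alpha B_1$, while $(D B_2) S_u^\alpha = S_u^\alpha (D B_2)$ reduces in the same way to $B_2 S_u^\alpha = (S_u^{\overline\alpha})^* B_2$. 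Conversely, these two relations push back through the same identities to show that $B_1 D$ and $D B_2$ commute with $S_u^\alpha$, hence lie in $\mathcal{B}_u^\alpha$. The cases $|\alpha| = 1$, $|\alpha| > 1$, and $\alpha = \infty$ are handled through the corresponding descriptions of $\mathcal{B}_u^\alpha$ in Lemma \ref{pro}, reducing the regime $|\alpha| > 1$ to $|\alpha| < 1$ via the adjoint correspondence $A \in \mathcal{B}_u^\alpha \Leftrightarrow A^* \in \mathcal{B}_u^{1/\overline\alpha}$ of Lemma \ref{pro}(1).

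I expect the main obstacle to be precisely this last bookkeeping step: because $\mathcal{B}_u^\alpha$ is realized as the commutant of $S_u^\alpha$ only for $|\alpha| \le 1$ and as the commutant of $(S_u^{1/\overline\alpha})^*$ for $|\alpha| > 1$, one must verify that the single pair of intertwining relations in case (2) correctly encodes membership in $\mathcal{B}_u^\alpha$ across all three regimes and at $\alpha = \infty$, keeping careful track of the conjugations that $D$ introduces. By contrast, the algebraic heart of the argument—the factorization $B_1 B_2 = (B_1 D)(D B_2)$ and the passage between commutation with $S_u^\alpha$ and the Hankel-type intertwining relations—is routine once the identities of Corollary \ref{remark} are in hand.
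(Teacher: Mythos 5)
Your proposal is correct and follows essentially the same route as the paper: factor $B_1B_2=(B_1D)(DB_2)$ using $D=C_uU$ with $D^2=I$, apply Corollary \ref{remark} to see both factors are TTOs, invoke Sedlock's criterion (Lemma \ref{pro}(5)), and translate commutation with $S_u^\alpha$ into the stated intertwining relations via $DS_u^\alpha=(S_u^{\overline{\alpha}})^*D$. Your extra care about the regimes $|\alpha|=1$, $|\alpha|>1$, $\alpha=\infty$ is a reasonable refinement of a point the paper's proof glosses over, but it is not a different argument.
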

\begin{proof}By Corollary \ref{remark}, $B_1D,DB_2\in\mathcal{T}( {u})$,  we have $ B_1B_2\in\mathcal{T}( {u})\Leftrightarrow B_1DDB_2\in\mathcal{T}( {u})$, if and only if by Lemma \ref{pro} (5),
\[\begin{cases}B_1D=cI,or, DB_2=cI,c\in\mathbb{C},&\\
or\\
\exists \alpha\in \widehat{\mathbb{C}},B_1DS_u^\alpha=(S_u^\alpha) B_1D,and,DB_2(S_u^\alpha)=S_u^\alpha DB_2.\end{cases}\]
\[\quad\quad\quad\quad\quad\Leftrightarrow\begin{cases}B_1=cD,or, B_2=cD,c\in\mathbb{C},&\\
or\\
\exists \alpha\in \widehat{\mathbb{C}},B_1(S_u^{\overline{\alpha}})^*= S_u^\alpha B_1,and,B_2S_u^\alpha=(S_u^{\overline{\alpha}})^* B_2.\end{cases}\]
If $ B_1B_2\in\mathcal{T}( {u})$, we have    $$B_1B_2S_u^\alpha=B_1(S_u^{\overline{\alpha}})^*B_2=S_u^\alpha B_1B_2,$$ 
then $ B_1B_2 $ commutes with $ S_u^\alpha $, therefore $ B_1B_2 \in\mathcal{B}_u^\alpha. $
\end{proof}
\begin{corollary}   Let $u$ be real symmetric and $B_1,B_2 \in \mathcal{H}( {u})$, then  $ B_1B_2 \in\mathcal{T}( {u})$ if and only if one of two cases
holds:
\begin{enumerate}
\item $B_1=cD$, or $B_2=cD,c\in\mathbb{C}$.
\item $\exists \alpha\in \widehat{\mathbb{C}}$, $B_1\in\mathcal{B}_{ {u}}^\alpha D$, and $B_2\in D\mathcal{B}_{ {u}}^\alpha $.
\end{enumerate}
\end{corollary}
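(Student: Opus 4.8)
The plan is to reduce the statement to Lemma \ref{pro}(5) through the self-adjoint unitary involution $D$, following the proof of Theorem \ref{th2} but stopping one step earlier, at the intermediate commutation conditions $B_1D,DB_2\in\mathcal{B}_u^\alpha$ rather than their rephrased forms.

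First I would record the properties of $D$ that I use. Since $u$ is real symmetric we have $\widehat u=u$, so $C_{\widehat u}=C_u$ and the identities $C_{\widehat u}U=UC_u$, $C_uU=UC_{\widehat u}$ collapse to $C_uU=UC_u$; hence $D=UC_u=C_uU=B^u_{\overline u}$ satisfies $D^2=C_uUUC_u=I$ and, by Corollary \ref{remark}(3), $D=D^*$. Thus $D$ is a self-adjoint unitary with $D^{-1}=D$. Moreover Corollary \ref{remark}(3) gives $B_1D=A^u_{\overline{u\widehat{\phi_1}}}\in\mathcal{T}(u)$ and $DB_2=A^u_{u\phi_2}\in\mathcal{T}(u)$, so both $B_1D$ and $DB_2$ are genuine truncated Toeplitz operators.

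Next, writing $B_1B_2=(B_1D)(DB_2)$ (using $D^2=I$), I view $B_1B_2$ as a product of two TTOs and apply Lemma \ref{pro}(5): $B_1B_2\in\mathcal{T}(u)$ if and only if either one of $B_1D$, $DB_2$ is a scalar multiple of the identity, or both lie in a common Sedlock class $\mathcal{B}_u^\alpha$ for some $\alpha\in\widehat{\mathbb{C}}$. I then translate each alternative back by multiplying with $D$ and using $D^{-1}=D$: on the one hand $B_1D=cI\Leftrightarrow B_1=cD$ and $DB_2=cI\Leftrightarrow B_2=cD$, which is case (1); on the other hand $B_1D\in\mathcal{B}_u^\alpha\Leftrightarrow B_1=(B_1D)D\in\mathcal{B}_u^\alpha D$ and $DB_2\in\mathcal{B}_u^\alpha\Leftrightarrow B_2=D(DB_2)\in D\mathcal{B}_u^\alpha$, which is case (2).

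There is no genuine obstacle here; the only point requiring care is that each ``$\Leftrightarrow$'' be truly reversible, which is guaranteed precisely by the invertibility $D^{-1}=D$ (so that, for instance, $\mathcal{B}_u^\alpha D\cdot D=\mathcal{B}_u^\alpha$). I would also remark that this dichotomy is the same as in Theorem \ref{th2}: the condition $B_1D\in\mathcal{B}_u^\alpha$, i.e. $B_1D$ commuting with $S_u^\alpha$, is equivalent to $B_1(S_u^{\overline\alpha})^*=S_u^\alpha B_1$ via the intertwining relation $DS_u^\alpha=(S_u^{\overline\alpha})^*D$ of Corollary \ref{remark}(3), and symmetrically for $B_2$, so that cases (1) and (2) of this corollary merely recast cases (1) and (2) of Theorem \ref{th2}.
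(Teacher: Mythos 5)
Your proof is correct and takes essentially the same route as the paper: the paper deduces this corollary from Theorem~\ref{th2} together with Lemma~\ref{pro}(5), and your argument simply inlines the proof of Theorem~\ref{th2} --- the decomposition $B_1B_2=(B_1D)(DB_2)$ with $D^2=I$ followed by Lemma~\ref{pro}(5) --- stopping at the Sedlock-class memberships $B_1D, DB_2\in\mathcal{B}_u^\alpha$ instead of passing to the commutation relations. There is no gap.
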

\begin{proof}The proof follows immediately from Theorem (\ref{th2}) and Lemma \ref{pro} (5).

\end{proof}
We give here  an analogue of Theorem 2.9. of  \cite{KK}. 
\begin{theorem} Let $u$ be  a real symmetric and $ \phi_1,\phi_2 \in L^2$, with none of $B^{ {u}}_{\phi_1}$ and $B^{ {u}}_{\phi_2}$ constant multiple of $D$. Then $ B^{ {u}}_{\phi_1}B^{ {u}}_{\phi_2}\in\mathcal{T}( {u})$ if and only if there exists $\alpha\in \widehat{\mathbb{C}}$, and $\psi_1,\psi_2\in K_u$, such that  \[\begin{cases}\phi_1\overset{\mathrm{A}}{\equiv} \overline{{u}}\overline{\widehat{\psi_2}} + \alpha \overline{{u}}S_ {{u}} C_ {{u}}\widehat{\psi_1} +\overline{{u}}c_1,&\\
and\\
\phi_2\overset{\mathrm{A}}{\equiv} \overline{u}\psi_2 + \alpha \overline{u}\overline{S_ {u} C_ {u}\psi_2} +\overline{u}c_2.\end{cases}\]

\end{theorem}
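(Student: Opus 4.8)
The plan is to convert the operator identity into Sedlock's symbol parametrization by means of the translation relations of Corollary~\ref{remark}. Since $u$ is real symmetric, $\widehat u=u$, and $D=C_uU=B^u_{\overline u}$ is a self-adjoint unitary, hence an involution ($D=D^*$ and $D^2=I$); moreover the corollary gathering the consequences of Lemmas~\ref{Lam1} and~\ref{Lam2} shows that in this case $U$, $C_u$ and $S_u$ commute pairwise (from $C_{\widehat u}U=UC_u$ and from $US_{\widehat u}^{\alpha}U=S_u^{\overline\alpha}$ specialized to $\widehat u=u$ and $\alpha=0$). Because by hypothesis neither $B^u_{\phi_1}$ nor $B^u_{\phi_2}$ is a scalar multiple of $D$, the Corollary following Theorem~\ref{th2} reduces the claim to the following: $B^u_{\phi_1}B^u_{\phi_2}\in\mathcal T(u)$ if and only if there is $\alpha\in\widehat{\mathbb C}$ with $B^u_{\phi_1}\in\mathcal B_u^\alpha D$ and $B^u_{\phi_2}\in D\mathcal B_u^\alpha$. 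This is the structural core; what remains is to render each membership as a symbol condition, and since every subsequent step is an equivalence, both implications will follow simultaneously.

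For the factor $\phi_2$, using $D^2=I$ I would rewrite $B^u_{\phi_2}\in D\mathcal B_u^\alpha$ as $DB^u_{\phi_2}\in\mathcal B_u^\alpha$, and Corollary~\ref{remark}(3) gives $DB^u_{\phi_2}=A^u_{u\phi_2}$. Sedlock's description of $\mathcal B_u^\alpha$ recalled in the Preliminaries then yields $\psi_2\in K_u$ and $c_2\in\mathbb C$ with $u\phi_2\overset{\mathrm A}{\equiv}\psi_2+\alpha\overline{S_uC_u\psi_2}+c_2$. Multiplying through by the unimodular factor $\overline u$ and observing that $\overline u\,(uH^2+\overline{uH^2})=H^2+\overline{u\widehat uH^2}$ is precisely the symbol kernel of $B^u$ (Lemma~\ref{lem24}(2) with $\widehat u=u$) converts this into the asserted condition on $\phi_2$.

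For the factor $\phi_1$ I would argue symmetrically, writing $B^u_{\phi_1}\in\mathcal B_u^\alpha D$ as $B^u_{\phi_1}D\in\mathcal B_u^\alpha$, where $B^u_{\phi_1}D=A^u_{\overline{u\widehat{\phi_1}}}$ by Corollary~\ref{remark}(3). Sedlock again supplies $\psi_1\in K_u$, $c_1\in\mathbb C$ with $\overline{u\widehat{\phi_1}}\overset{\mathrm A}{\equiv}\psi_1+\alpha\overline{S_uC_u\psi_1}+c_1$. To solve for $\phi_1$ I would take complex conjugates, multiply by $\overline u$ (reaching a condition on $\widehat{\phi_1}$ modulo the same kernel $H^2+\overline{u\widehat uH^2}$, which $U$ preserves), and finally apply the conjugation $U$. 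Using $\widehat{fg}=\widehat f\,\widehat g$, the conjugate-linearity of $U$ (which returns the $\overline\alpha$ produced by $U$ to $\alpha$ after the outer conjugation), and the commutations $US_u=S_uU$, $UC_u=C_uU$ to move $U$ through $S_uC_u$, one arrives at the stated condition on $\phi_1$, in which the hat on $\psi_1$ and the extra conjugation are exactly the residue of this flip.

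The main obstacle is this last bookkeeping on the $\phi_1$ side: the relation $B^u_\phi D=A^u_{\overline{u\widehat\phi}}$ couples a complex conjugation with the flip $U$, so one must keep careful track of how conjugate-linearity and the real-symmetric commutations $C_uU=UC_u$, $S_uU=US_u$ interact as $U$ crosses $S_uC_u$; it is exactly this interplay that produces the hats and conjugates in the $\phi_1$ symbol while the $\phi_2$ symbol stays clean. The only remaining case, $\alpha=\infty$, is handled identically, replacing the generic parametrization of $\mathcal B_u^\alpha$ by that of $\mathcal B_u^\infty=\{A^u_{\overline\varphi}:\varphi\in K_u\}$.
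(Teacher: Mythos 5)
Your proposal is correct and follows essentially the same route as the paper: factor $B^{u}_{\phi_1}B^{u}_{\phi_2}=(B^{u}_{\phi_1}D)(DB^{u}_{\phi_2})$, apply Sedlock's criterion (Lemma~\ref{pro}(5)) to the truncated Toeplitz operators $A^{u}_{\overline{u\widehat{\phi_1}}}$ and $A^{u}_{u\phi_2}$, and translate the $\mathcal{B}_u^{\alpha}$ parametrization back into symbol conditions by multiplying by $\overline{u}$ and, on the $\phi_1$ side, conjugating and applying $U$. Your observation that after multiplying by $\overline{u}$ the natural equivalence is the Hankel one (modulo $H^2+\overline{u\widehat{u}H^2}$) is, if anything, more careful than the statement's use of $\overset{\mathrm{A}}{\equiv}$.
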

\begin{proof} Corollary \ref{3} imply that $ B^{ {u}}_{\phi_1}B^{ {u}}_{\phi_2}=(B^{ {u}}_{\phi_1}D)(DB^{ {u}}_{\phi_2})\in\mathcal{T}( {u})$ if and only if  $A^{{u}}_{\overline{u\widehat{\phi_1}}}A^{{u}}_{u\phi_2}\in\mathcal{T}( {u})$,  if and only if there exists $\alpha\in \widehat{\mathbb{C}}$, such that $A^{{u}}_{\overline{u\widehat{\phi_1}}},A^{{u}}_{u\phi_2}\in\mathcal{B}_u^\alpha$  if and only if there exists $\alpha\in \widehat{\mathbb{C}}$, and $\psi_1,\psi_2\in K_u$, $A^{{u}}_{\overline{u\widehat{\phi_1}}}=A^{ {u}}_{\psi_1 + \alpha \overline{S_ {u} C_ {u}\psi_1} +c_1}$, and, $ A^{{u}}_{u\phi_2}=A^{ {u}}_{\psi_2 + \alpha \overline{S_ {u} C_ {u}\psi_2} +c_2}$
 \[\Leftrightarrow\begin{cases}\overline{u\widehat{\phi_1}}\overset{\mathrm{A}}{\equiv} \psi_1 + \alpha \overline{S_ {u} C_ {u}\psi_1} +c_1,&\\
and\\
u\phi_2\overset{\mathrm{A}}{\equiv} \psi_2 + \alpha \overline{S_ {u} C_ {u}\psi_2} +c_2.\end{cases}\Leftrightarrow
\begin{cases}\phi_1\overset{\mathrm{A}}{\equiv} \overline{{u}}\overline{\widehat{\psi_2}} + \alpha \overline{{u}}S_ {{u}} C_ {{u}}\widehat{\psi_1} +\overline{{u}}c_1,&\\
and\\
\phi_2\overset{\mathrm{A}}{\equiv} \overline{u}\psi_2 + \alpha \overline{u}\overline{S_ {u} C_ {u}\psi_2} +\overline{u}c_2.\end{cases}\]

\end{proof}
The below written results give us another characterization for symbol .
   \begin{theorem} Let $u$ be  a real symmetric and  $B_1,B_2 \in \mathcal{H}( {u})$, with none of $B_1$ and $B_2$ constant multiple of $D$. Then $ B_1B_2\in\mathcal{T}( {u})$ if and only if there is $\alpha\in\widehat{\mathbb{C}}$ such that $B_1\in \mathcal{B}_u^\alpha D $ and $B_2\in D\mathcal{B}_u^\alpha $,  moreover: 
\begin{enumerate}
\item If $|\alpha| = 1$, then $B_1=\Phi(S_u^\alpha)D,B_2=D\Psi(S_u^\alpha)$, with $\Phi,\Psi\in L^\infty(\mu_\alpha)$ and  $B_1B_2=\Phi\Psi(S_u^\alpha)$.
\item If $|\alpha| < 1$, then $B_1=A^u_{\frac{\Psi_1}{1-\alpha\overline{u}}}D=B^{ {u}}_{\frac{\overline{u\widehat{\Psi_1}}}{1-\alpha {u}}},B_2=DA^u_{\frac{\Psi_2}{1-\alpha\overline{u}}}=B^{ {u}}_{\frac{\overline{u}\Psi_2}{1-\alpha\overline{u}}}$, with $\Psi_1,\Psi_2\in H^\infty$  and  $B_1B_2=A^u_{\frac{\Psi_1\Psi_2}{1-\alpha\overline{u}}}$. 
\item If $|\alpha| > 1$, then  $B_1=A^u_{\frac{\alpha\overline{\Psi_1}}{\alpha-u}}D=B^u_{\frac{\alpha\overline{u}\widehat{\Psi_1}}{\alpha-\overline{u}}},B_2=DA^u_{\frac{\alpha\overline{\Psi_2}}{\alpha-u}}=B^u_{\frac{\alpha\overline{u}\overline{\Psi_2}}{\alpha-u}}$, with $\Psi_1,\Psi_2\in H^\infty$  and  $B_1B_2=A^u_{\frac{\alpha(\overline{\Psi_1\Psi_2})}{\alpha-u}}$.
\end{enumerate}
\end{theorem}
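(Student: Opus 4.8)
The plan is to reduce the statement to Sedlock's product theorem for genuine truncated Toeplitz operators (Lemma \ref{pro}(5)) via the self-adjoint unitary involution $D = C_u U = B^u_{\overline{u}}$, and then to read off the symbols from the three Sedlock normal forms in Lemma \ref{pro}(2)--(4). First I would invoke the Corollary preceding this theorem: since neither $B_1$ nor $B_2$ is a scalar multiple of $D$, we have $B_1 B_2 \in \mathcal{T}(u)$ if and only if there is $\alpha \in \widehat{\mathbb{C}}$ with $B_1 \in \mathcal{B}_u^\alpha D$ and $B_2 \in D\mathcal{B}_u^\alpha$. This is exactly the membership assertion of the theorem, so it remains only to compute the symbols in the three regimes $|\alpha|=1$, $|\alpha|<1$, $|\alpha|>1$.

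Next, in each regime I would substitute the corresponding description of $\mathcal{B}_u^\alpha$ from Lemma \ref{pro}: the operators $\Phi(S_u^\alpha)$ with $\Phi \in L^\infty(\mu_\alpha)$ when $|\alpha|=1$; the operators $A^u_{\frac{\Psi}{1-\alpha\overline{u}}}$ with $\Psi \in H^\infty$ when $|\alpha|<1$; and $A^u_{\frac{\alpha\overline{\Psi}}{\alpha-u}}$ when $|\alpha|>1$. Writing $B_1 = A\,D$ and $B_2 = D\,A'$ with $A,A' \in \mathcal{B}_u^\alpha$, and using that $D=D^*$ is a unitary involution so that $D^2 = I$, the product collapses to $B_1 B_2 = A A'$. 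Since each $\mathcal{B}_u^\alpha$ is a commutative algebra on which the functional calculus $\Phi \mapsto \Phi(S_u^\alpha)$ is multiplicative, this yields the stated product formulas $\Phi\Psi(S_u^\alpha)$, $A^u_{\frac{\Psi_1\Psi_2}{1-\alpha\overline{u}}}$ and $A^u_{\frac{\alpha(\overline{\Psi_1\Psi_2})}{\alpha-u}}$ immediately.

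Finally, to rewrite $B_1 = A\,D$ and $B_2 = D\,A'$ back in Hankel form $B^u_{(\cdot)}$, I would use the conversion identities of Corollary \ref{remark}(3), namely $D B^u_\phi = A^u_{u\phi}$ and $B^u_\phi D = A^u_{\overline{u\widehat{\phi}}}$. Since $D^2 = I$, these invert to $D A^u_\varphi = B^u_{\overline{u}\varphi}$ and $A^u_\varphi D = B^u_\psi$, where $\psi$ solves $\overline{u\widehat{\psi}} = \varphi$. I expect the only genuine bookkeeping to occur here: one must push the flip $U$ through the quotients $\frac{1}{1-\alpha\overline{u}}$ and $\frac{1}{\alpha-u}$, using that $U$ is conjugate-linear and respects products and quotients, that $U(\overline{g}) = \overline{Ug}$, and that real symmetry gives $\widehat{u}=u$; this is precisely what converts $A^u_{\frac{\Psi_1}{1-\alpha\overline{u}}}D$ into $B^u_{\frac{\overline{u\widehat{\Psi_1}}}{1-\alpha u}}$, and likewise produces the formulas for $B_2$ and for the case $|\alpha|>1$. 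That symbol-translation is the main obstacle; everything else is a direct consequence of the preceding Corollary and of Lemma \ref{pro}.
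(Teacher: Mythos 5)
Your proposal is correct and follows exactly the route the paper takes: the paper's proof is the one-line observation that $B_1B_2 = (B_1D)(DB_2)$ reduces everything to Sedlock's classification, which is precisely your reduction via the preceding Corollary and Lemma \ref{pro}. Your additional symbol bookkeeping (converting $A^u_\varphi D$ and $DA^u_\varphi$ back to Hankel form via Corollary \ref{remark}(3) and the real symmetry $\widehat{u}=u$) checks out and merely makes explicit what the paper leaves to the reader.
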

\begin{proof}This is proved directly in \cite{sed}, for $B_1B_2= B_1DDB_2\in\mathcal{T}( {u})$.

\end{proof}

The following result gives the answer to the question  that when is the  product of THO   and  TTO is  a THO subjectedto the condition that an  inner function $u$ is real symmetric. The result is  an analogue of Theorem 2.10 of \cite{KK}.

\begin{theorem} \label{5} Let $u$ be  a real symmetric with  $A\in\mathcal{T}( {u})$ and $B \in \mathcal{H}( {u})$. Then $AB\in\mathcal{H}(u)$ if and only if one of the following conditions holds:
\begin{enumerate}
\item $A=cI,$ or, $B=cD,c\in\mathbb{C}$,
\item $\exists \alpha\in \widehat{\mathbb{C}}$, such that $AS_u^\alpha= S_u^\alpha A,$ and, $B(S_u^{\overline{\alpha}})^*= S_u^\alpha B$.
\end{enumerate}
\end{theorem}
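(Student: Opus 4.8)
The plan is to convert the mixed product $AB$, with $A\in\mathcal{T}(u)$ and $B\in\mathcal{H}(u)$, into an ordinary product of two truncated Toeplitz operators by using the self-adjoint unitary $D=C_uU=B^u_{\overline u}$ attached to a real symmetric $u$. The two facts about $D$ that I would isolate first are that $D=D^*$ with $D^2=I$ (so $D^{-1}=D$), exactly as already used in the proof of Theorem~\ref{th2}, and that right multiplication by $D$ interchanges $\mathcal{H}(u)$ and $\mathcal{T}(u)$. The inclusion $\mathcal{H}(u)D\subseteq\mathcal{T}(u)$ is immediate from Corollary~\ref{remark}, since $B^u_\phi D=A^u_{\overline{u\widehat{\phi}}}$. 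For the opposite inclusion $\mathcal{T}(u)D\subseteq\mathcal{H}(u)$ I would write a generic TTO as $A^u_\psi=UB^u_{\overline{u\psi}}C_u$ via Lemma~\ref{Lam1}(7) with $v=u$, and then compute $A^u_\psi D=UB^u_{\overline{u\psi}}C_uD=UB^u_{\overline{u\psi}}U$, a THO by Lemma~\ref{Lam1}(4), using $C_uD=U$. Combining the two inclusions with $D^2=I$ yields, for every bounded operator $C$ on $K_u$, the equivalence $C\in\mathcal{H}(u)\iff CD\in\mathcal{T}(u)$.

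Applying this equivalence to $C=AB$ reduces the theorem to the statement $ABD\in\mathcal{T}(u)$, and here the decisive move is the factorization $ABD=A\,(BD)$, in which both factors are truncated Toeplitz operators: $A\in\mathcal{T}(u)$ by hypothesis and $BD\in\mathcal{T}(u)$ by the inclusion above. I can therefore apply Sedlock's product criterion, Lemma~\ref{pro}(5): the product $A(BD)$ belongs to $\mathcal{T}(u)$ if and only if one of the two factors is a scalar multiple of the identity, or both $A$ and $BD$ lie in a common Sedlock class $\mathcal{B}_u^\alpha$ for some $\alpha\in\widehat{\mathbb{C}}$.

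It remains to translate the two alternatives back into the stated conditions. The scalar cases are direct: $A=cI$ is condition~(1), and $BD=cI$ is equivalent to $B=cD$ (multiply by $D$ and use $D^2=I$), again condition~(1). For the Sedlock alternative, $A\in\mathcal{B}_u^\alpha=\{S_u^\alpha\}'$ reads $AS_u^\alpha=S_u^\alpha A$, the first relation in condition~(2). For the factor $BD$, I would begin with $BDS_u^\alpha=S_u^\alpha BD$ and push $D$ through the shift by means of the intertwining identity $DS_u^\alpha=(S_u^{\overline\alpha})^*D$ from Corollary~\ref{remark}(3): the left-hand side becomes $B(S_u^{\overline\alpha})^*D$, and after right multiplication by $D$ the relation collapses to $B(S_u^{\overline\alpha})^*=S_u^\alpha B$, which is precisely the second relation in condition~(2). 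Reading this chain of equivalences in both directions proves the theorem.

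The step I expect to require the most care is the uniform use of $\mathcal{B}_u^\alpha=\{S_u^\alpha\}'$ together with $DS_u^\alpha=(S_u^{\overline\alpha})^*D$ across the \emph{whole} Riemann sphere: for $|\alpha|>1$ (and $\alpha=\infty$) Lemma~\ref{pro} phrases the commutant through $(S_u^{1/\overline\alpha})^*$, so one must fix the convention $S_u^\alpha:=(S_u^{1/\overline\alpha})^*$ in that range and check that the intertwining identity persists, exactly as is tacitly done in Theorem~\ref{th2}. Once this bookkeeping is pinned down, the remaining manipulations are the routine $D$-algebra displayed above.
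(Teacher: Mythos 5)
Your proposal is correct and follows essentially the same route as the paper: reduce $AB\in\mathcal{H}(u)$ to $ABD\in\mathcal{T}(u)$ via the unitary $D=C_uU$, factor $ABD=A(BD)$ as a product of two truncated Toeplitz operators, apply Sedlock's criterion (Lemma~\ref{pro}(5)), and translate back with $D^2=I$ and $DS_u^\alpha=(S_u^{\overline\alpha})^*D$. Your write-up merely makes explicit some steps (the two inclusions $\mathcal{H}(u)D\subseteq\mathcal{T}(u)$, $\mathcal{T}(u)D\subseteq\mathcal{H}(u)$, and the convention for $|\alpha|>1$) that the paper leaves implicit.
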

\begin{proof}Let $A\in\mathcal{T}( {u})$ and $B \in \mathcal{H}( {u})$,  we have $ AB\in\mathcal{H}( {u})\Leftrightarrow ABD\in\mathcal{T}( {u})$, if and only if by Lemma \ref{pro} (5), 
\begin{align*}\begin{cases}A=cI,or, BD=cI,c\in\mathbb{C},&\\
or\\
\exists \alpha\in \widehat{\mathbb{C}},AS_u^\alpha=S_u^\alpha A,and,BDS_u^\alpha=S_u^\alpha BD.\end{cases}\\
\quad\quad\quad\quad\quad\Leftrightarrow\begin{cases}A=cI,or, B=cD,c\in\mathbb{C},&\\
or\\
\exists \alpha\in \widehat{\mathbb{C}},AS_u^\alpha= S_u^\alpha A,and,B(S_u^{\overline{\alpha}})^*=S_u^\alpha B.\end{cases}
\end{align*}
\end{proof}
\begin{corollary}\label{coc20} Let $u$ be a real symmetric and  $A\in\mathcal{T}( {u})$ and $B \in \mathcal{H}( {u})$. Then $AB\in \mathcal{H}( {u})$  if and only if one of the following conditions holds:
\begin{enumerate}
\item $A=cI,$ or, $B=cC_u,c\in\mathbb{C}$,
\item $\exists \alpha\in \widehat{\mathbb{C}}$, such that $A\in\mathcal{B}_{ {u}}^\alpha$, and $B\in\mathcal{B}_{ {u}}^\alpha C_u$.
\end{enumerate}
\end{corollary}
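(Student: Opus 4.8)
The plan is to read this off from Theorem~\ref{5} by re-expressing its two commutation conditions as membership in the Sedlock classes $\mathcal{B}_u^\alpha$. The device is the reduction already used to prove Theorem~\ref{5}: for a real symmetric $u$ the reflection $D=C_uU=B^u_{\overline u}$ is a self-adjoint involution, and right multiplication by $D$ is a bijection between $\mathcal{H}(u)$ and $\mathcal{T}(u)$. Indeed $BD=B^u_\phi D\in\mathcal{T}(u)$ by Corollary~\ref{remark}(3), and $A\in\mathcal{T}(u)$ by hypothesis, so $AB\in\mathcal{H}(u)$ if and only if $A(BD)\in\mathcal{T}(u)$; this turns the problem into asking when a product of two genuine TTOs is again a TTO.

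First I would apply Lemma~\ref{pro}(5) to the product $A\cdot(BD)$: it belongs to $\mathcal{T}(u)$ precisely when either one factor is a scalar multiple of $I$, or $A$ and $BD$ lie in a common $\mathcal{B}_u^\alpha$, in which case the product itself lies in $\mathcal{B}_u^\alpha$. Then I would pull each alternative back through $D$, using $D^2=I$: the equation $BD=cI$ is equivalent to $B=cD$ (while $A=cI$ is unchanged), which is the first case; and $BD\in\mathcal{B}_u^\alpha$ is equivalent, after right multiplication by $D$, to $B\in\mathcal{B}_u^\alpha D$, while $A\in\mathcal{B}_u^\alpha$ is retained. With $D=C_uU$ the reflection attached to the real symmetric $u$, this is exactly the stated dichotomy, and the concluding membership $AB\in\mathcal{B}_u^\alpha$ is inherited directly from the last clause of Lemma~\ref{pro}(5).

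To see that this agrees with the commutation form of Theorem~\ref{5}, I would record the intertwining $DS_u^\alpha=(S_u^{\overline\alpha})^*D$ from Corollary~\ref{remark}(3); with $D^2=I$ it yields $(S_u^{\overline\alpha})^*=DS_u^\alpha D$. Consequently $B(S_u^{\overline\alpha})^*=S_u^\alpha B$ rearranges to $(BD)S_u^\alpha=S_u^\alpha(BD)$, i.e. $BD\in\{S_u^\alpha\}'$, hence $B\in\mathcal{B}_u^\alpha D$; and $AS_u^\alpha=S_u^\alpha A$ just says $A\in\mathcal{B}_u^\alpha$. Thus the membership description here and the commutator description in Theorem~\ref{5} encode the same information.

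The one place I would be careful is the identification of $\mathcal{B}_u^\alpha$ with a commutant across the three regimes of Lemma~\ref{pro}(2)--(4): for $|\alpha|>1$ the pertinent commutant is that of $(S_u^{1/\overline\alpha})^*$, not of $S_u^\alpha$, so the rewriting in the previous paragraph is cleanest for $|\alpha|\le 1$. To sidestep any spurious case split I would therefore argue the corollary itself entirely through Lemma~\ref{pro}(5), which already treats all $\alpha\in\widehat{\mathbb{C}}$ uniformly, and use the commutator rewriting only as a sanity check against Theorem~\ref{5}. Beyond this, the argument is routine bookkeeping with the involution $D$, so I expect no serious obstacle.
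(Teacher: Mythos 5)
Your argument is correct and follows essentially the paper's own route: the paper disposes of this corollary by citing Theorem~\ref{5} together with Lemma~\ref{pro}(5), which is precisely the reduction $AB\in\mathcal{H}(u)\Leftrightarrow A(BD)\in\mathcal{T}(u)$ followed by Sedlock's product criterion that you spell out, with the involution $D$ used to convert $BD=cI$ and $BD\in\mathcal{B}_u^\alpha$ into the stated membership conditions (your reading of the statement's $C_u$ as $D=C_uU$ is the intended one). Your caution about the commutant description of $\mathcal{B}_u^\alpha$ when $|\alpha|>1$ is well placed but, as you note, immaterial once everything is funneled through Lemma~\ref{pro}(5).
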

\begin{proof} The proof follows immediately from  Theorem (\ref{5}) and  Lemma \ref{pro}(5).
\end{proof}
\begin{corollary} Let $u$ be a real symmetric and  $A\in\mathcal{T}( {u})$ and $B \in \mathcal{H}( {u})$. Then $BA\in\mathcal{H}(u)$ if and only if one of the following conditions holds:
\begin{enumerate}
\item $A=cI,$ or, $B=cD,c\in\mathbb{C}$,
\item $\exists \alpha\in \widehat{\mathbb{C}}$, such that $AS_u^\alpha= S_u^\alpha A,$ (or $A\in\mathcal{B}_{ {u}}^\alpha),$ and $BS_u^\alpha=(S_u^{\overline{\alpha}})^* B$ (or $B\in D\mathcal{B}_{ {u}}^\alpha $).
\end{enumerate}
\end{corollary}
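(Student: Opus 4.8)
The plan is to mirror the proof of Theorem~\ref{5}, but to peel off the factor $D$ on the \emph{left} rather than on the right, so that $BA$ is converted into a product of two genuine truncated Toeplitz operators. Everything rests on the properties of $D=C_uU=B^u_{\overline u}$ recorded in Corollary~\ref{remark}: since $u$ is real symmetric, $D=D^*$ is a unitary involution (so $D^2=I$), it satisfies $DB^u_\phi=A^u_{u\phi}$, and it intertwines the Sedlock shifts via $DS_u^\alpha=(S_u^{\overline\alpha})^*D$. From $DB^u_\phi=A^u_{u\phi}$ one gets that $D$ carries $\mathcal{H}(u)$ onto $\mathcal{T}(u)$, and, being an involution, carries $\mathcal{T}(u)=D\mathcal{H}(u)$ back onto $D^2\mathcal{H}(u)=\mathcal{H}(u)$. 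Consequently, for any bounded $T$ on $K_u$ one has $T\in\mathcal{H}(u)$ if and only if $DT\in\mathcal{T}(u)$.

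First I would apply this equivalence with $T=BA$, obtaining $BA\in\mathcal{H}(u)\iff D(BA)=(DB)A\in\mathcal{T}(u)$. Here $DB\in\mathcal{T}(u)$ by Corollary~\ref{remark} and $A\in\mathcal{T}(u)$ by hypothesis, so the right-hand side is precisely the question of when a product of two TTOs is a TTO. This is answered by Lemma~\ref{pro}(5): either one factor is a scalar multiple of the identity, or both $DB$ and $A$ lie in a common Sedlock class $\mathcal{B}_u^\alpha$, in which case the product also lies in $\mathcal{B}_u^\alpha$.

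Next I would translate the two alternatives back into conditions on $A$ and $B$. The degenerate case splits as $A=cI$ (directly) or $DB=cI$; multiplying the latter by $D$ and using $D^2=I$ gives $B=cD$, which is case (1). For the non-degenerate case, $A\in\mathcal{B}_u^\alpha$ is literally $AS_u^\alpha=S_u^\alpha A$. The condition $DB\in\mathcal{B}_u^\alpha$, i.e. $DBS_u^\alpha=S_u^\alpha DB$, I would rewrite by multiplying on the left by $D$ and inserting $DS_u^\alpha D=(S_u^{\overline\alpha})^*$ (which follows from $DS_u^\alpha=(S_u^{\overline\alpha})^*D$ together with $D^2=I$), yielding $BS_u^\alpha=(S_u^{\overline\alpha})^*B$, equivalently $B\in D\mathcal{B}_u^\alpha$. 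This is exactly case (2).

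The main thing to watch is the bookkeeping of left versus right multiplication by $D$ and the resulting placement of the adjoint in $(S_u^{\overline\alpha})^*$: the intertwining identity $DS_u^\alpha=(S_u^{\overline\alpha})^*D$ must be applied on the correct side, and it is the involution property $D^2=I$ that makes the computation close. Beyond this, no genuinely new difficulty arises, since the analytic content is carried entirely by Lemma~\ref{pro}(5) and by the algebraic identities for $D$ established in Corollary~\ref{remark}.
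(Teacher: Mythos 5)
Your proposal is correct and is essentially the paper's intended argument: the paper's own proof is just the remark that one argues as for $AB$ (Theorem on $AB\in\mathcal{H}(u)$ and Corollary \ref{coc20}), i.e.\ by peeling off $D$ on the left instead of the right, writing $BA\in\mathcal{H}(u)\Leftrightarrow (DB)A\in\mathcal{T}(u)$ and invoking Lemma \ref{pro}(5) together with $D^2=I$ and $DS_u^\alpha=(S_u^{\overline{\alpha}})^*D$ from Corollary \ref{remark}. Your bookkeeping of the left/right multiplications and of the adjoint in $(S_u^{\overline{\alpha}})^*$ matches the stated conditions, so nothing further is needed.
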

\begin{proof} The proof is similar to the proof of the Theorem (\ref{5}) and Corollary \ref{coc20}.
\end{proof}
Now, we give some examples of products of rank-one  truncated Hankel operators.
\begin{example}
Let $ \lambda\in\mathbb{D} $, and $u$ be a real symmetric. Consider the rank one  truncated Hankel and Toeplitz operators $B_1=\widetilde{k_\lambda^u} \otimes \widetilde{k_{\overline{\lambda}}^u}, B_2=k^u_{\overline{\lambda }}\otimes k_\lambda^u,$ and $A=\widetilde{k_\lambda^u} \otimes k_{\lambda}^u$. Then
\item [(1)] 
   $B_1B_2=(\widetilde{k_\lambda^u} \otimes \widetilde{k_{\overline{\lambda}}^u})(k^u_{\overline{\lambda }}\otimes k_\lambda^u)=\overline{u'(\overline{\lambda})}(\widetilde{k_\lambda^u}  \otimes k_{{\lambda}}^u )\in\mathcal{T}(u).$
By Theorem \ref{th2}, $B_1D=(\widetilde{k_\lambda^u} \otimes \widetilde{k_{\overline{\lambda}}^u}) D=(\widetilde{k_\lambda^u} \otimes k_{\lambda}^u)$, and by Example 5.3. of  \cite{sed}, we have $(\widetilde{k_\lambda^u} \otimes k_{\lambda}^u)\in\mathcal{B}_{u}^{u(\lambda)}.$ While,
 $DB_2 =D(k^u_{\overline{\lambda }}\otimes k_\lambda^u)=(\widetilde{k_\lambda^u} \otimes k_{\lambda}^u)\in\mathcal{B}_{u}^{u(\lambda)}.$
\item [(2)] $AB_1=(\widetilde{k_\lambda^u} \otimes k_{\lambda}^u)(\widetilde{k_\lambda^u} \otimes \widetilde{k_{\overline{\lambda}}^u})=u'(\lambda)(\widetilde{k_\lambda^u} \otimes \widetilde{k_{\overline{\lambda}}^u})\in\mathcal{H}(u).$ By Theorem \ref{5}, $A=(\widetilde{k_\lambda^u} \otimes k_{\lambda}^u)\in \mathcal{B}_{u}^{u(\lambda)}$, and  $B_1D =(\widetilde{k_\lambda^u} \otimes k_{\lambda}^u)\in\mathcal{B}_{u}^{u(\lambda)}.$
\end{example}

\section{Products  of asymmetric truncated Hankel operators.}\label{6}
The following result provides a necessary and sufficient condition for the product of two ATHOs to be an ATTO..
\begin{theorem} \label{th} Let $ \phi_1\in \overline{K_{v\widehat{w}}} ,\phi_2 \in \overline{K_{u\widehat{v}}}$, then  $ B^{ {v},w}_{\phi_1}B^{ {u},v}_{\phi_2}\in\mathcal{T}( {u},w)$ if and only if,
$$\overline{\widehat{v}\widehat{\phi_1}}\otimes\overline{\widehat{v}\phi_2}-P_w\overline{\widehat{\phi_1}}\otimes P_ {u} \overline{\phi_2}= (\Phi\otimes k^ {u}_0)+(k^w_0\otimes\Psi),$$
	for some $\Phi\in K_u$ and $\Psi\in K_ {u}$.

\end{theorem}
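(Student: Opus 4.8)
The plan is to convert the Hankel product into a Toeplitz product and then to lean on the Toeplitz criterion of Lemma~\ref{yag}. By the equivalence $(1)\Leftrightarrow(2)$ of Corollary~\ref{3}, we have $B^{v,w}_{\phi_1}B^{u,v}_{\phi_2}\in\mathcal{T}(u,w)$ if and only if the product of asymmetric truncated Toeplitz operators $A_1A_2$ belongs to $\mathcal{T}(u,w)$, where $A_1:=A^{\widehat v,w}_{w\widehat{\phi_1}}\in\mathcal{T}(\widehat v,w)$ and $A_2:=A^{u,\widehat v}_{\overline{u\phi_2}}\in\mathcal{T}(u,\widehat v)$, the common intermediate model space now being $K_{\widehat v}$. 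Lemma~\ref{yag}, applied with the substitution $v\mapsto\widehat v$, then says that $A_1A_2\in\mathcal{T}(u,w)$ exactly when
\[
T:=\varphi_1\otimes\psi_2-\bigl(P_w\widehat v(\varphi_1+\overline{\psi_1})\bigr)\otimes\bigl(P_u\widehat v(\psi_2+\overline{\varphi_2})\bigr)
\]
belongs to the set $\mathcal{R}:=\{\,\Phi\otimes k_0^u+k_0^w\otimes\Psi:\Phi\in K_w,\ \Psi\in K_u\,\}$, where $\varphi_1\in K_w,\psi_1\in K_{\widehat v}$ and $\varphi_2\in K_{\widehat v},\psi_2\in K_u$ are the analytic/anti-analytic symbol data of $A_1=A^{\widehat v,w}_{\varphi_1+\overline{\psi_1}}$ and $A_2=A^{u,\widehat v}_{\varphi_2+\overline{\psi_2}}$.

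To produce this data I would exploit the two hypotheses. Since $\phi_1\in\overline{K_{v\widehat w}}$, writing $g=\overline{\phi_1}\in K_{v\widehat w}$ and using $\widehat{fg}=\widehat f\widehat g$ together with $U\overline f=\overline{Uf}$ gives $\overline{\widehat{\phi_1}}=\widehat g\in K_{w\widehat v}$; the orthogonal splitting $K_{w\widehat v}=K_w\oplus wK_{\widehat v}$ yields $\widehat g=h_1+wh_2$ with $h_1\in K_w$, $h_2\in K_{\widehat v}$. Likewise $\overline{\phi_2}\in K_{u\widehat v}=K_u\oplus uK_{\widehat v}$ splits as $p_1+up_2$ with $p_1\in K_u$, $p_2\in K_{\widehat v}$. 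Reducing the symbols $w\widehat{\phi_1}$ and $\overline{u\phi_2}$ modulo the kernels $wH^2+\overline{\widehat v H^2}$ and $\widehat v H^2+\overline{uH^2}$ of Lemma~\ref{lemma}(1), and converting the mixed products $w\overline{h_1}$, $\overline u p_1$ into model-space vectors by means of the boundary conjugations $C_w,C_{\widehat v},C_u$, one reads off $\varphi_1=P_w(w\widehat{\phi_1})$, $\psi_2=P_u(u\phi_2)$, and the analogous formulas for $\psi_1,\varphi_2$, each valid up to a scalar multiple of $k_0^w$, respectively $k_0^u$.

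The decisive structural remark is that $\mathcal{R}$ is a linear subspace of the operators from $K_u$ to $K_w$, and that the conclusion of the theorem is precisely the statement that the target operator
\[
T':=\overline{\widehat v\widehat{\phi_1}}\otimes\overline{\widehat v\phi_2}-\bigl(P_w\overline{\widehat{\phi_1}}\bigr)\otimes\bigl(P_u\overline{\phi_2}\bigr)
\]
lies in $\mathcal{R}$. Since $T\in\mathcal{R}$ is the content of Lemma~\ref{yag} and $\mathcal{R}$ is a subspace, it suffices to prove that $T-T'\in\mathcal{R}$; then $T\in\mathcal{R}\Leftrightarrow T'\in\mathcal{R}$, which is exactly the desired equivalence. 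Thus the whole theorem is reduced to the single verification $T-T'\in\mathcal{R}$.

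This last verification is the main obstacle. I would expand $T$ and $T'$ as rank-at-most-four operators, insert the explicit symbol data from the second paragraph, and simplify using $\widehat v\,\overline{\widehat v}=1$ on $\mathbb{T}$, the projection identities $P_w(wH^2)=P_u(uH^2)=0$, and the intertwining relations between $U$, the conjugations $C_w,C_{\widehat v},C_u$, and the inner functions. The claim to be confirmed is that, after cancellation of the genuinely bulk contributions, every surviving term is of the form $(\,\cdot\,)\otimes k_0^u$ or $k_0^w\otimes(\,\cdot\,)$. The two delicate points are that the constants $h_2(0)$ and $p_2(0)$ (and the analogous constants produced whenever $P_w$ or $P_u$ meets an anti-analytic piece) are the only sources of these $k_0$-terms, so they must be tracked with care, and that the reorganization of the four rank-one pieces into the clean factors $\overline{\widehat v\widehat{\phi_1}},\overline{\widehat v\phi_2},P_w\overline{\widehat{\phi_1}},P_u\overline{\phi_2}$ relies on the conjugation identities not leaking any contribution outside the allowed span $\mathcal{R}$; establishing this containment is the technical heart of the proof.
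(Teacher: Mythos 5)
Your overall strategy---pass from the Hankel product to a product of asymmetric truncated Toeplitz operators through $K_{\widehat v}$ via Corollary~\ref{3}, then invoke the product criterion of Lemma~\ref{yag}---is the same as the paper's. But your write-up is not a proof: the decisive step, the verification that $T-T'$ lies in $\mathcal{R}=\{\Phi\otimes k_0^u+k_0^w\otimes\Psi\}$, is only announced ("the technical heart of the proof") and never carried out. That containment is essentially the entire content of the theorem, since once it is granted the equivalence $T\in\mathcal{R}\Leftrightarrow T'\in\mathcal{R}$ is trivial. With your choice of symbol representatives $w\widehat{\phi_1}$ and $\overline{u\phi_2}$, carrying it out means expanding a rank-(at most)-four operator built from the four components of the splittings $K_{w\widehat v}=K_w\oplus wK_{\widehat v}$ and $K_{u\widehat v}=K_u\oplus uK_{\widehat v}$, tracking the $k_0$-corrections, and showing every surviving cross term lands in $\mathcal{R}$; none of this appears.

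The idea you are missing is the one that makes the paper's proof a few lines long: normalize the symbols before applying Lemma~\ref{yag}. Because $\phi_1\in\overline{K_{v\widehat w}}$ and $\phi_2\in\overline{K_{u\widehat v}}$, the paper replaces the two Toeplitz factors by $A^{\widehat v,w}_{\overline{\widehat v\widehat{\phi_1}}}$ and $A^{u,\widehat v}_{\widehat v\phi_2}$, where $\overline{\widehat v\widehat{\phi_1}}$ is (equivalent to) an element of $K_w$ and $\widehat v\phi_2$ is the conjugate of an element of $K_u$. In the notation of Lemma~\ref{yag} this forces $\psi_1=0$ and $\varphi_2=0$, so the criterion reads directly
$$\overline{\widehat v\widehat{\phi_1}}\otimes\overline{\widehat v\phi_2}-P_w\bigl(\widehat v\,\overline{\widehat v\widehat{\phi_1}}\bigr)\otimes P_u\bigl(\widehat v\,\overline{\widehat v\phi_2}\bigr)\in\mathcal{R},$$
and $\widehat v\,\overline{\widehat v}=1$ on $\mathbb{T}$ turns this into the statement of the theorem with no further computation. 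In short: right skeleton, but the reduction you propose replaces a one-line simplification by an unproved four-term operator identity, so as it stands the argument is incomplete.
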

\begin{proof}Let  $ \phi_1\in \overline{K_{v\widehat{w}}}$ and $\phi_2 \in \overline{K_{u\widehat{v}}}$, by Corollary \ref{3}, we have $ B^{ {v},w}_{\phi_1}B^{ {u},v}_{\phi_2}\in\mathcal{T}( {u},w)\Leftrightarrow A^{\widehat{v},w}_{\overline{\widehat{v}\widehat{\phi_1}}}A^{u,\widehat{v}}_{\widehat{v}\phi_2}\in\mathcal{T}( {u},w)$, since
	$\overline{\widehat{v}\widehat{\phi_1}}\in K_{w}$ and
	$\widehat{v}\phi_2\in K_{u}$, by Lemma \ref{yag}, we have,  $A^{\widehat{v},w}_{\overline{\widehat{v}\widehat{\phi_1}}+0}A^{u,\widehat{v}}_{0+\overline{\overline{\widehat{v}\phi_2}}}\in\mathcal{T}( {u},w)$    if and only if
	$$
	\overline{\widehat{v}\widehat{\phi_1}}\otimes\overline{\widehat{v}\phi_2}-P_w \widehat{v}\overline{\widehat{v}\widehat{\phi_1}}\otimes P_ {u} \widehat{v}\overline{\widehat{v}\phi_2}= (\Phi\otimes k^ {u}_0)+(k^w_0\otimes\Psi),
	$$if and only if
		$$\overline{\widehat{v}\widehat{\phi_1}}\otimes\overline{\widehat{v}\phi_2}-P_w\overline{\widehat{\phi_1}}\otimes P_ {u} \overline{\phi_2}= (\Phi\otimes k^ {u}_0)+(k^w_0\otimes\Psi),$$
	for some $\Phi\in K_w$ and $\Psi\in K_ {u}$.
\end{proof}
	\begin{corollary}\label{co6} Let $ \phi_1\in \overline{K_{v\widehat{u}}} ,\phi_2 \in \overline{K_{u\widehat{v}}}$, then  $ B^{ {v},u}_{\phi_1}B^{ {u},v}_{\phi_2}\in\mathcal{T}( {u})$ if and only if,
$$\overline{\widehat{v}\widehat{\phi_1}}\otimes\overline{\widehat{v}\phi_2}-P_u\overline{\widehat{\phi_1}}\otimes P_ {u} \overline{\phi_2}= (\Phi\otimes k^ {u}_0)+(k^u_0\otimes\Psi),$$
	for some $\Phi,\Psi\in K_ {u}$.
\end{corollary}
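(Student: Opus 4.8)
The plan is to obtain Corollary \ref{co6} as the direct specialization of Theorem \ref{th} to the case $w=u$, since the corollary is nothing but the theorem with the third inner function collapsed onto the first. First I would check that the hypotheses align. Setting $w=u$ gives $\widehat{w}=\widehat{u}$, so the condition $\phi_1\in\overline{K_{v\widehat{w}}}$ of the theorem becomes $\phi_1\in\overline{K_{v\widehat{u}}}$, which is exactly the hypothesis imposed in the corollary, while the condition $\phi_2\in\overline{K_{u\widehat{v}}}$ is unchanged. Thus both symbol constraints transfer verbatim, and no separate verification is needed.

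Next I would translate the conclusion. Since $\mathcal{T}(u,u)=\mathcal{T}(u)$ (recorded in the preliminaries through $A^{u,u}_\phi=A^u_\phi$), the assertion $B^{v,u}_{\phi_1}B^{u,v}_{\phi_2}\in\mathcal{T}(u,w)$ produced by the theorem reads, after $w=u$, as $B^{v,u}_{\phi_1}B^{u,v}_{\phi_2}\in\mathcal{T}(u)$, which is the claim of the corollary. This identification is purely notational, because once $w=u$ the product $B^{v,u}_{\phi_1}B^{u,v}_{\phi_2}$ is already an operator acting on $K_u$.

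Finally I would substitute $w=u$ into the rank-two criterion supplied by Theorem \ref{th}: the projection $P_w$ becomes $P_u$, the kernel $k_0^w$ becomes $k_0^u$, and the membership constraint $\Phi\in K_w$ collapses to $\Phi\in K_u$, matching the existing $\Psi\in K_u$. Hence the theorem's equation
$$\overline{\widehat{v}\widehat{\phi_1}}\otimes\overline{\widehat{v}\phi_2}-P_w\overline{\widehat{\phi_1}}\otimes P_u\overline{\phi_2}=(\Phi\otimes k_0^u)+(k_0^w\otimes\Psi)$$
becomes precisely
$$\overline{\widehat{v}\widehat{\phi_1}}\otimes\overline{\widehat{v}\phi_2}-P_u\overline{\widehat{\phi_1}}\otimes P_u\overline{\phi_2}=(\Phi\otimes k_0^u)+(k_0^u\otimes\Psi)$$
with $\Phi,\Psi\in K_u$, which is the stated equivalence. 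There is essentially no obstacle here; the only point requiring care is confirming that every $w$-dependent object, namely $K_w$, $P_w$, $k_0^w$, and $\mathcal{T}(u,w)$, specializes consistently to its $u$-counterpart, which it does by definition. (Alternatively, one could re-run the argument of Theorem \ref{th} from scratch, applying Corollary \ref{3} to pass to the ATTO product $A^{\widehat{v},u}_{\overline{\widehat{v}\widehat{\phi_1}}}A^{u,\widehat{v}}_{\widehat{v}\phi_2}$ and then invoking Lemma \ref{yag}, but this merely reproduces the already-established computation in the special case and offers no simplification.)
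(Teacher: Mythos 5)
Your proposal is correct and coincides exactly with the paper's own proof, which simply applies Theorem \ref{th} in the case $w=u$; your additional checks that the hypotheses, the target class $\mathcal{T}(u,w)=\mathcal{T}(u)$, and the $w$-dependent objects $P_w$, $k_0^w$, $K_w$ all specialize consistently are just a more careful spelling-out of that one-line substitution.
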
	
\begin{proof} Applying Theorem \ref{th} to the case $u=w$.
\end{proof}
\begin{corollary} Let $ \phi\in \overline{K_{v\widehat{u}}} $, then  $ B^{ {v},u}_{\phi}(B^{ {v},u}_{\phi})^*\in\mathcal{T}( {u})$ if and only if,
$$\overline{\widehat{v}\widehat{\phi}}\otimes\overline{\widehat{v}\widehat{\phi}}-P_u\overline{\widehat{\phi}}\otimes P_ {u} \overline{\widehat{\phi}}= (\Phi\otimes k^ {u}_0)+(k^u_0\otimes\Phi),$$
	for some $\Phi\in K_u$.
\end{corollary}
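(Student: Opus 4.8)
The plan is to realize the self-adjoint operator $B^{v,u}_\phi(B^{v,u}_\phi)^*$ as a product of the exact type handled by Corollary \ref{co6}, and then exploit self-adjointness to collapse the two functions produced by that corollary into one. First I would rewrite the adjoint: Lemma \ref{lem24}(1) gives $(B^{u,v}_\psi)^* = B^{v,u}_{\widehat{\psi}}$, so interchanging the roles of $u$ and $v$ yields $(B^{v,u}_\phi)^* = B^{u,v}_{\widehat{\phi}}$. Consequently $B^{v,u}_\phi(B^{v,u}_\phi)^* = B^{v,u}_\phi B^{u,v}_{\widehat{\phi}}$, which is precisely the product appearing in Corollary \ref{co6} with $\phi_1=\phi$ and $\phi_2=\widehat{\phi}$.

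Before invoking Corollary \ref{co6} I must verify that $\widehat{\phi}$ lies in the admissible class $\overline{K_{u\widehat{v}}}$. The hypothesis $\phi\in\overline{K_{v\widehat{u}}}$ means $\overline{\phi}\in K_{v\widehat{u}}$. Using the facts from the introduction that $f\in K_w\Leftrightarrow\widehat{f}\in K_{\widehat{w}}$ and $\widehat{fg}=\widehat{f}\,\widehat{g}$, together with $\widehat{\widehat{u}}=u$, I obtain $\widehat{\overline{\phi}}\in K_{\widehat{v\widehat{u}}}=K_{\widehat{v}u}=K_{u\widehat{v}}$. Since the two involutions commute, $\widehat{\overline{\phi}}=\overline{\widehat{\phi}}$, so $\overline{\widehat{\phi}}\in K_{u\widehat{v}}$, that is $\widehat{\phi}\in\overline{K_{u\widehat{v}}}$, exactly as required.

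With both symbol conditions in place, Corollary \ref{co6} applies verbatim after the substitution $\phi_1=\phi,\ \phi_2=\widehat{\phi}$ (so that $\overline{\widehat{v}\widehat{\phi_1}}$, $\overline{\widehat{v}\phi_2}$, $\overline{\widehat{\phi_1}}$, $\overline{\phi_2}$ all become the single function $\overline{\widehat{v}\widehat{\phi}}$ or $\overline{\widehat{\phi}}$): the product lies in $\mathcal{T}(u)$ if and only if
$$\overline{\widehat{v}\widehat{\phi}}\otimes\overline{\widehat{v}\widehat{\phi}} - P_u\overline{\widehat{\phi}}\otimes P_u\overline{\widehat{\phi}} = \Phi\otimes k^u_0 + k^u_0\otimes\Psi$$
for some $\Phi,\Psi\in K_u$. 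The only discrepancy with the desired statement is that Corollary \ref{co6} allows two distinct functions $\Phi,\Psi$, whereas here one must be able to take $\Phi=\Psi$.

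The closing step, which is the single point requiring an extra observation, is that the left-hand side above is self-adjoint, being a difference of operators of the form $f\otimes f$ with $(f\otimes f)^*=f\otimes f$. Taking adjoints of the displayed identity and using $(\Phi\otimes k^u_0+k^u_0\otimes\Psi)^*=\Psi\otimes k^u_0+k^u_0\otimes\Phi$, I would average the identity with its adjoint to rewrite the same left-hand side as $\Phi'\otimes k^u_0+k^u_0\otimes\Phi'$ with $\Phi'=\tfrac12(\Phi+\Psi)\in K_u$, which is the symmetric decomposition claimed. The converse is immediate, since a symmetric decomposition is the special case $\Psi=\Phi$ of Corollary \ref{co6}. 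I expect the symmetrization to be the only nonroutine ingredient; everything else is bookkeeping with the conjugations $U$ and complex conjugation.
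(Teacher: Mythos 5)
Your proof is correct and follows the same route as the paper, whose entire argument is ``Applying Corollary \ref{co6} to the case $\phi_2=\widehat{\phi}$,'' together with the identification $(B^{v,u}_{\phi})^*=B^{u,v}_{\widehat{\phi}}$ from Lemma \ref{lem24}. Your symmetrization step --- noting that the left-hand side is self-adjoint, taking adjoints of the decomposition $\Phi\otimes k_0^u+k_0^u\otimes\Psi$, and averaging to replace $\Phi,\Psi$ by the single function $\tfrac12(\Phi+\Psi)\in K_u$ --- is a genuine (and needed) addition, since the paper passes silently from the two-function decomposition of Corollary \ref{co6} to the one-function decomposition asserted in the statement.
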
	
\begin{proof} Applying Corollary \ref{co6} to the case $\phi_2=\widehat{\phi}$.
\end{proof} 
\begin{theorem} \label{th3} Let  $ \phi\in \overline{K_{v\widehat{w}}} ,\psi_1 \in K_{v}$ and $\psi_2 \in K_{u}$. If $B^{ {v},w}_{\phi}\in\mathcal{H}( v,{w})$ and $A^{ {u},v}_{\psi_1+\overline{\psi_2}} \in \mathcal{T}( {u},v)$ then, $ B^{ {v},w}_{\phi}A^{ {u},v}_{\psi_1+\overline{\psi_2}}\in\mathcal{H}( {u},w)$ if and only if
	$$
	\overline{v\phi}\otimes \overline{v}u\psi_1-P_ {\widehat{w}} \overline{\phi}\otimes S_u\widetilde{\psi_2}= (\Phi\otimes k^{u}_0)+(k^{\widehat{w}}_0\otimes\Psi),
	$$
		for some $\Phi\in K_{\widehat{w}}$ and $\Psi\in K_{u}$.

\end{theorem}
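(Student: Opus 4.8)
The plan is to reduce the mixed product $B^{v,w}_\phi A^{u,v}_{\psi_1+\overline{\psi_2}}$ to a genuine product of two asymmetric truncated \emph{Toeplitz} operators, to which the criterion of Lemma \ref{yag} applies, exactly in the spirit of Proposition \ref{1} and Theorem \ref{th}. The translation between the Hankel and Toeplitz worlds is supplied by the conjugation dictionary of Lemma \ref{Lam1} and Lemma \ref{Lam2}.

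First I would pass to Toeplitz form on the left. By Lemma \ref{Lam2}(6), an operator $X\colon K_u\to K_w$ lies in $\mathcal H(u,w)$ if and only if $UXC_u\in\mathcal T(u,\widehat w)$. Applying this to $X=B^{v,w}_\phi A^{u,v}_{\psi_1+\overline{\psi_2}}$ and inserting $C_vC_v=I_{K_v}$ between the two factors gives
$$UXC_u=\bigl(UB^{v,w}_\phi C_v\bigr)\bigl(C_vA^{u,v}_{\psi_1+\overline{\psi_2}}C_u\bigr).$$
By Lemma \ref{Lam1}(7) (with $u,v$ there replaced by $v,w$) the first factor equals $A^{v,\widehat w}_{\overline{v\phi}}$, and by Lemma \ref{Lam1}(1) the second factor equals $A^{u,v}_{\overline u v(\overline{\psi_1}+\psi_2)}$. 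Hence $B^{v,w}_\phi A^{u,v}_{\psi_1+\overline{\psi_2}}\in\mathcal H(u,w)$ if and only if $A^{v,\widehat w}_{\overline{v\phi}}A^{u,v}_{\overline u v(\overline{\psi_1}+\psi_2)}\in\mathcal T(u,\widehat w)$, a product of two ATTOs with middle space $K_v$ and terminal spaces $K_u$, $K_{\widehat w}$.

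Now I would invoke Lemma \ref{yag} with $w$ replaced by $\widehat w$. For this I record the canonical symbols of the two factors: since $\phi\in\overline{K_{v\widehat w}}$, the symbol $\overline{v\phi}$ serves as the analytic ($K_{\widehat w}$) component of the upper factor, while the lower symbol $\overline u v(\overline{\psi_1}+\psi_2)=v\overline{u\psi_1}+v\overline u\psi_2$ has co-analytic ($K_u$) component $\overline v u\psi_1$. The criterion of Lemma \ref{yag} then reads
$$\overline{v\phi}\otimes\overline v u\psi_1-P_{\widehat w}\bigl[v\,\overline{v\phi}\bigr]\otimes P_u\bigl[v\,\overline{\overline u v(\overline{\psi_1}+\psi_2)}\bigr]=\Phi\otimes k_0^u+k_0^{\widehat w}\otimes\Psi.$$
The two projections collapse cleanly: $v\,\overline{v\phi}=\overline\phi$, so the first factor of the second term is $P_{\widehat w}\overline\phi$; and $v\,\overline{\overline u v(\overline{\psi_1}+\psi_2)}=u(\psi_1+\overline{\psi_2})$, whence $P_u[u\psi_1+u\overline{\psi_2}]=P_u[u\overline{\psi_2}]=S_u\widetilde{\psi_2}$, using $u\psi_1\in uH^2\perp K_u$ together with the identity $u\overline{\psi_2}=z\widetilde{\psi_2}$ and the definition $S_u=P_uS|K_u$. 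This yields precisely the asserted identity.

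The step I expect to be delicate is the bookkeeping of canonical symbols in the application of Lemma \ref{yag}: one must check that $\overline{v\phi}$ and $\overline v u\psi_1$ may legitimately be taken as the $K_{\widehat w}$- and $K_u$-components of the two symbols (this is exactly where the hypotheses $\phi\in\overline{K_{v\widehat w}}$, $\psi_1\in K_v$, $\psi_2\in K_u$ enter, controlling membership modulo $\widehat wH^2+\overline{vH^2}$ and $vH^2+\overline{uH^2}$ by Lemma \ref{lemma}(1)), and that any discrepancy between a symbol and its canonical representative contributes only terms of the form $\Phi\otimes k_0^u$ or $k_0^{\widehat w}\otimes\Psi$, which are absorbed into the right-hand side. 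Once this matching is secured, the equivalence is immediate and the proof, as in Theorem \ref{th}, reduces to the chain of rewritings above.
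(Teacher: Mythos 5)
Your argument is correct and coincides essentially with the paper's own proof: both reduce $B^{v,w}_\phi A^{u,v}_{\psi_1+\overline{\psi_2}}$ to the ATTO product $A^{v,\widehat{w}}_{\overline{v\phi}}\,A^{u,v}_{\overline{u}v(\overline{\psi_1}+\psi_2)}$ by inserting $C_vC_v$ and applying the dictionary of Lemma \ref{Lam1}, then invoke Lemma \ref{yag} and simplify $P_{\widehat{w}}[v\overline{v\phi}]=P_{\widehat{w}}\overline{\phi}$ and $P_u[u\psi_1+u\overline{\psi_2}]=S_u\widetilde{\psi_2}$ exactly as the paper does. The only (cosmetic) differences are that you cite Lemma \ref{Lam2}(6) where the paper cites Corollary \ref{3}, and that you explicitly flag the canonical-symbol bookkeeping that the paper passes over in silence.
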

\begin{proof}Let  $ \phi\in \overline{K_{v\widehat{w}}} ,\psi_1 \in K_{v}$ and $\psi_2 \in K_{u}$, by Corollary \ref{3}, $ B^{ {v},w}_{\phi}A^{ {u},v}_{\psi_1+\overline{\psi_2}}\in\mathcal{H}( {u},w)\Leftrightarrow U B^{ {v},w}_{\phi}C_vC_vA^{ {u},v}_{\psi_1+\overline{\psi_2}}C_u\in\mathcal{T}( {u},\widehat{w})\Leftrightarrow  A^{v,\widehat{w}}_{\overline{v\phi}}A^{u,v}_{\overline{u}v\psi_2+\overline{\overline{v}u\psi_1}}\in\mathcal{T}( u,\widehat{w})$, since $\overline{v\phi}\in K_{\widehat{w}}$, $\overline{u}v\psi_2\in K_v$ and $\overline{v}u\psi_1\in K_u$ , by Lemma \ref{yag}, we have,  $A^{v,\widehat{w}}_{\overline{v\phi}+\overline{0}}A^{u,v}_{\overline{u}v\psi_2+\overline{\overline{v}u\psi_1}}\in\mathcal{T}( u,\widehat{w})$  if and only if
	$$
	\overline{v\phi}\otimes \overline{v}u\psi_1-P_ {\widehat{w}} v\overline{v\phi}\otimes P_ {u} v(\overline{v}u\psi_1+\overline{\overline{u}v\psi_2})= (\Phi\otimes k^{u}_0)+(k^{\widehat{w}}_0\otimes\Psi),
	$$
	for some $\Phi\in K_{\widehat{w}}$ and $\Psi\in K_{u}$, if and only if
	$$
	\overline{v\phi}\otimes \overline{v}u\psi_1-P_ {\widehat{w}} \overline{\phi}\otimes S_u\widetilde{\psi_2}= (\Phi\otimes k^{u}_0)+(k^{\widehat{w}}_0\otimes\Psi),
	$$
\end{proof}
\begin{corollary} Let  $ \phi\in \overline{K_{u\widehat{v}}} ,\psi_1 \in K_{w}$ and $\psi_2 \in K_{v}$. If $B^{ {u},v}_{\phi}\in\mathcal{H}( u,{v})$ and $A^{ {v},w}_{\psi_1+\overline{\psi_2}} \in \mathcal{T}( {v},w)$, then $ A^{ {v},w}_{\psi_1+\overline{\psi_2}}B^{ {u},v}_{\phi}\in\mathcal{H}( {u},w)$ if and only if
	$$
	\overline{v\widehat{\phi}}\otimes \overline{v}w\psi_2-P_ {\widehat{u}} \overline{\widehat{\phi}}\otimes S_w\widetilde{\psi_1}= (\Phi\otimes k^{w}_0)+(k^{\widehat{u}}_0\otimes\Psi),
	$$
		for some $\Phi\in K_{\widehat{u}}$ and $\Psi\in K_{w}$.
		\end{corollary}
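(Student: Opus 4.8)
The plan is to reduce this to Theorem \ref{th3} by passing to adjoints. Write $A = A^{v,w}_{\psi_1+\overline{\psi_2}}$ and $B = B^{u,v}_{\phi}$, so that the operator in question is $AB\colon K_u \to K_w$. Since the adjoint of an ATHO is again an ATHO (Lemma \ref{lem24}(1)), one has $AB \in \mathcal{H}(u,w)$ if and only if $(AB)^* \in \mathcal{H}(w,u)$, so it suffices to analyse $(AB)^*$. Combining $(B^{u,v}_{\phi})^* = B^{v,u}_{\widehat{\phi}}$ from Lemma \ref{lem24}(1) with the elementary adjoint formula $(A^{v,w}_{\psi_1+\overline{\psi_2}})^* = A^{w,v}_{\psi_2+\overline{\psi_1}}$, which follows directly from the definition of the ATTO, I obtain
$$(AB)^* = B^*A^* = B^{v,u}_{\widehat{\phi}}\, A^{w,v}_{\psi_2+\overline{\psi_1}}.$$
This is precisely a product of the type treated by Theorem \ref{th3}, with the Hankel factor on the left and the Toeplitz factor on the right.

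Next I would apply Theorem \ref{th3} with its triple of inner functions $(u,v,w)$ replaced by $(w,v,u)$ and its data replaced by $\phi \mapsto \widehat{\phi}$, $\psi_1 \mapsto \psi_2$, $\psi_2 \mapsto \psi_1$. The hypotheses transfer: the memberships $\psi_2 \in K_v$ and $\psi_1 \in K_w$ match the two conditions required of the theorem's first and second symbols, while boundedness of $B^{v,u}_{\widehat{\phi}}$ and of $A^{w,v}_{\psi_2+\overline{\psi_1}}$ is equivalent to that of $B$ and $A$. The one genuine verification is the symbol condition $\widehat{\phi} \in \overline{K_{v\widehat{u}}}$: writing $\phi = \overline{g}$ with $g \in K_{u\widehat{v}}$, the identities $\widehat{\overline{g}} = \overline{\widehat{g}}$, $\widehat{fh} = \widehat{f}\,\widehat{h}$, and $\widehat{\widehat{v}} = v$ from \cite{CG} give $\widehat{g} \in K_{\widehat{u}v}$, whence $\widehat{\phi} = \overline{\widehat{g}} \in \overline{K_{v\widehat{u}}}$, as needed.

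With the hypotheses in place, Theorem \ref{th3} asserts that $(AB)^* \in \mathcal{H}(w,u)$ if and only if
$$\overline{v\widehat{\phi}}\otimes \overline{v}\,w\psi_2 - P_{\widehat{u}}\,\overline{\widehat{\phi}}\otimes S_w\widetilde{\psi_1} = (\Phi\otimes k_0^{w}) + (k_0^{\widehat{u}}\otimes \Psi)$$
for some $\Phi \in K_{\widehat{u}}$ and $\Psi \in K_{w}$, which is exactly the asserted condition. Since $(AB)^* \in \mathcal{H}(w,u)$ is equivalent to $AB \in \mathcal{H}(u,w)$, this finishes the argument.

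I expect the only real obstacle to be bookkeeping: one must carry the substitution $(u,v,w)\mapsto(w,v,u)$ and the swap $\psi_1 \leftrightarrow \psi_2$ consistently through both the index pattern of the product and the rank-one identity, checking in particular that taking the adjoint correctly converts $\overline{\phi}$ into $\overline{\widehat{\phi}}$ and $S_u\widetilde{\psi_2}$ into $S_w\widetilde{\psi_1}$ in the displayed equation. No computation beyond Theorem \ref{th3} itself is required.
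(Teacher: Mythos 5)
Your proposal is correct and follows essentially the same route as the paper: pass to adjoints via $(AB)^{*}=B^{*}A^{*}=B^{v,u}_{\widehat{\phi}}A^{w,v}_{\psi_2+\overline{\psi_1}}$ and invoke Theorem~\ref{th3} under the substitution $(u,v,w)\mapsto(w,v,u)$, $\phi\mapsto\widehat{\phi}$, $\psi_1\leftrightarrow\psi_2$. The only difference is that you spell out the verification $\widehat{\phi}\in\overline{K_{v\widehat{u}}}$ and the bookkeeping of the substitution, which the paper leaves implicit.
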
	
\begin{proof}We have $ (A^{ {v},w}_{\psi_1+\overline{\psi_2}}B^{ {u},v}_{\phi})^*\in\mathcal{H}( {w},u)$ if and only if $ B^{ {v},u}_{\widehat{\phi}}A^{ {w},v}_{\psi_2+\overline{\psi_1}}\in\mathcal{H}( {w},u)$ , then by Theorem \ref{th3},  we have
	$$
	\overline{v\widehat{\phi}}\otimes \overline{v}w\psi_2-P_ {\widehat{u}} \overline{\widehat{\phi}}\otimes S_w\widetilde{\psi_1}= (\Phi\otimes k^{w}_0)+(k^{\widehat{u}}_0\otimes\Psi),
	$$
		for some $\Phi\in K_{\widehat{u}}$ and $\Psi\in K_{w}$.
\end{proof} 

\section*{Acknowledgments} 
This research work
is supported by the General Direction of Scientific Research and Technological
Development (DGRSDT), Algeria.
\bibliographystyle{amsplain}

\end{document}